\documentclass[11pt]{article}
\usepackage{graphicx} 
\usepackage{float}
\usepackage[font=small,labelfont=bf,margin=1cm]{caption}
\usepackage{subcaption}
\usepackage{mathtools}
\usepackage{mathrsfs}
\usepackage{verbatim} 
\usepackage{amsmath,amssymb,amsthm}
\usepackage{bbm}
\usepackage{xcolor}
\usepackage{hyperref}
\usepackage[utf8]{inputenc}  
\usepackage[T1]{fontenc}
\usepackage{bm}
\hypersetup{
    colorlinks=true,
    linkcolor=blue,
    filecolor=blue,  
    citecolor=[RGB]{76,153,0},
    urlcolor=blue,
    }
\usepackage[english]{babel}
\usepackage{pgfplots}
\pgfplotsset{compat=1.18}
\usepgfplotslibrary{groupplots} 

\usepackage[top=1in,bottom=1in,left=1in,right=1in]{geometry}

\newcommand{\Bin}{\mathrm{Bin}}

\newcommand{\E}{\mathbb{E}}
\newcommand{\Prob}{\mathbb{P}}

\usepackage{etoolbox}
\makeatletter
\patchcmd{\@endtheorem}{\@endpefalse}{}{}{}
\makeatother

\newtheorem{theorem}{Theorem}
\numberwithin{theorem}{section}
\newtheorem{definition}[theorem]{Definition}

\newtheorem{corollary}[theorem]{Corollary}

\newtheorem{lemma}[theorem]{Lemma}

\title{Limit Laws for Consensus Protocols on the Complete Graph 
}

\author{Julian Becker\thanks{LMU Munich, Department of Mathematics, Theresienstr.~39, 80333 Munich, Germany. Email: \texttt{\{becker,kpanagio\}@math.lmu.de}.} \and Konstantinos Panagiotou\footnotemark[1]
}

\begin{document}

\maketitle

\begin{abstract}
We study a distributed consensus problem on a complete communication network of 
$n$ vertices, each holding one of two opinions. The vertices communicate in rounds, possibly in the presence of adversarial noise, and exchange information until they all agree on a single opinion. We consider a general class of protocols, where the vertices randomly sample neighbors and update their own opinion according to an update function $f$ depending on the sampled opinions. A prominent example is the \texttt{$k$-maj} protocol, where every vertex adopts the majority opinion of~$k$ randomly sampled neighbors, breaking ties uniformly.

We consider the \textit{runtime}~$R_n$ that is the number of rounds until all vertices agree on the same opinion, which we call the~\emph{dominating opinion}~$D_n$.
In our main result we describe the limiting distributions of these two key quantities for a large class of update functions $f$, for arbitrary initial configurations and under the presence of an adversary who may alter the opinions of up to $o(\sqrt{n})$ vertices in each round.
We show that there are $f$-specific constants $\gamma, m > 0$ such that~$R_n$ centers around $\mu_n = \frac{1}{2}\log_\gamma n + \log_m\ln n$, and we describe the asymptotic distribution of $R_n - \mu_n$. In particular, we show that it does not converge, and that it becomes asymptotically periodic both in the $\log n$ as well as the $\log\log n$ scale. Applied to
\texttt{$k$-maj}, our results show, among other things, that $\gamma_{\texttt{$k$-maj}} = \binom{k-1}{\lfloor k/2 \rfloor}2^{1-k}k \sim ({2k}/{\pi})^{1/2}$.
\end{abstract}

~\\
\noindent {\bf Keywords:} Consensus, Majority, Gossip Model, Limiting distribution

\section{Introduction}

In this paper we provide a tight analysis for the runtime of a large class of consensus protocols on fully connected networks that includes the well-studied~\texttt{$k$-maj} ($k$-majority) protocol. In particular, we describe the limiting distribution of the runtime and of the eventually dominating opinion in a general setting, where there is (not necessarily) a dominating opinion in the beginning, and where there might be adversarial changes to the opinion distribution.

The study of consensus protocols and related probabilistic dynamics -- where vertices in a network exchange information, synchronously or asynchronously, possibly in the presence of noise, in order to reach consensus -- has attracted extensive attention not only within distributed computing, but also among other disciplines, where opinion forming and synchronization are relevant primitives. The literature is vast, and we refer to the extensive overview by Becchetti et~al.~\cite{consensusprotoverview} for a broad discussion and many further references, and to the subsequent sections. 

\paragraph{Example: \texttt{$k$-maj}.} Before we proceed to our general model we give a brief demonstration of our results in the case of the \texttt{$k$-maj} protocol, which is defined on a complete communication network of $n$ vertices, each holding one of two opinions that we call $X$ and $Y$. The vertices communicate in rounds, where everyone samples independently $k$ neighbors uniformly at random and adopts the majority opinion among them, breaking ties uniformly. Let $R_n^{\texttt{$k$-maj}}(X_0)$ be the \emph{runtime}, i.e., the number of rounds until all vertices hold the same opinion, where $X_0$ is the initial number of vertices holding $X$. Moreover, let $D_n^{\texttt{$k$-maj}}(X_0)$ be the dominating opinion at the end of the process.

Majority protocols have been studied extensively, a small sample is~\cite{becchettilowerbound,Becchetti2016Stabilizing, hierarchyconj}. Berenbrink et al.~show in~\cite{inp:bcghkr23} that $R_n^{\texttt{$k$-maj}}(X_0)$ \emph{stochastically dominates} $R_n^{\texttt{$\ell$-maj}}(X_0)$ for $\ell > k$, so that the runtime should decline with increasing $k$.
Ghaffari and Lengler \cite{GhaffariLengler} established that with probability $1-n^{-O(1)}$, \texttt{$3$-maj} finishes in $O(\ln n)$ rounds, which then implies that \texttt{$k$-maj} reaches consensus in logarithmic time for any $k\geq 3$.
Yet, the magnitude of the speedup and the precise effect of $k$ are not studied. 
In the following, and throughout the paper, we use the abbreviation \emph{whp} (with high probability) to denote events that occur with probability $1-o(1)$. Different notions of whp appear in the literature,~e.g.,~with probability $1-n^{-O(1)}$, but we do not distinguish between them here. 

Let $k\geq 3$ and $X_0 = n/2 + d \sqrt{n}$, where $0 \le d \le (1/2-\varepsilon)\sqrt{n}$ for some $0<\varepsilon \leq 1/2$. This assumption is not restrictive. Indeed, if $X$ is initially the minority opinion, we can just swap $X$ and $Y$ to guarantee that $d \ge 0$. Moreover, if $d = (1+o(1))\sqrt{n}$, then already at the beginning a vanishing fraction of the vertices has opinion $Y$, which is not so interesting. Let
\[
m = \lceil k/2\rceil, \quad \beta =\binom{k}{\lfloor k/2 \rfloor} \quad \text{and} \quad \gamma = \binom{k-1}{\lfloor k/2\rfloor} 2^{1-k}k.
\]
We say that a sequence of random variables $(W_n)_{n\in \mathbb{N}_0}$ is \emph{stochastically bounded} by $(a_n)_{n \in \mathbb{N}_0}$, and we write~$W_n = O_\Prob(a_n)$, if for every $\varepsilon>0$ there exist $C_\varepsilon, n_\varepsilon$ such that $\Prob(|W_n|\ge C_\varepsilon a_n) \le \varepsilon$ for every $n\geq n_\varepsilon$. 
Set
\[
\mu_n^{\text{$k$-maj}}(d) :=
\begin{cases}
    \frac{1}{2}\log_\gamma n + \log_m \ln n, & \text{if } d = O(1), \\[1mm]
    \frac{1}{2}\log_\gamma(n/d^2) + \log_m \ln n, & \text{if } d = \omega(1).
\end{cases}
\]
Our main result, Theorem~\ref{thm: main result}, implies that 
\[
    R_n^{\texttt{$k$-maj}}(X_0) = \mu_n^{\texttt{$k$-maj}}(d) + O_\Prob(1),  
\]
that is, the runtime has \emph{stochastically bounded} fluctuations around $\mu_n^{\texttt{$k$-maj}}(d)$.
Actually, our results imply more. If $d = \omega(1)$, 
in which case there is already a substantial initial bias towards $X$,
then we can show that the runtime concentrates on at most \emph{two} values.
Moreover, whp  $D_n^{\texttt{$k$-maj}}(X_0) = X$ in that case. On the other hand, if $d = O(1)$, the picture (and the analysis) is more intricate, as the process has initially no/very small drift. Here we describe and provide qualitative information about the limiting distribution of $R_n^{\texttt{$k$-maj}}(X_0) - \lfloor\mu_n^{\texttt{$k$-maj}}(d)\rfloor$, which has support on all~$\mathbb{Z}$, see Section~\ref{ssec:model}. Moreover, in this case the two opinions perform a race, and both have a chance to prevail; to wit, we obtain  completely explicit that
\[
    \Prob\big(D_n^{\texttt{$k$-maj}}(X_0) = X\big) = \sqrt{\frac{2(\gamma^2-1)}{\pi}}\int_0^\infty e^{-2(\gamma^2-1)(t-d)^2} dt + o(1) \in (0,1).
\]
Thus, our results enable us to perform a fine-grained runtime analysis and a qualitative comparison of majority protocols for all values of $k$. Let us also mention that our analysis is robust enough and leaves enough margin that it allows for small changes in the model, namely adversarial interventions of order $o(\sqrt{n})$ in each round. We make this more precise in the following section.

\subsection{Model \& Results}
\label{ssec:model}

Let $f:[0,1] \rightarrow [0,1]$ be, for the moment, an arbitrary function. We associate to $f$ a consensus protocol that is a stochastic process $(X_t)_{t\in\mathbb{N}_0}$ with initial configuration
\begin{equation*}
    X_0 = \frac{n}{2} + d \sqrt{n} +o(\sqrt{n}).    
\end{equation*}
Thus, at $t=0$, there are $X_0$ vertices holding opinion $X$, and there is an initial bias of  $d\sqrt{n}$ (vertices). As we will see, the most interesting and complex case is when $d$ is bounded and independent of $n$, since then the process moves just a little or even not at all in expectation; if, on the other hand, the initial bias is larger, then our analysis applies as well but starts already in a 'simpler' situation. Now $f$ comes into play. Given $X_t$, the probability that a specific vertex has opinion $X$ at time $t+1$ is given by $f(X_t/n)$, and this is independent of all other vertices. In other words, the probability that a vertex maintains or changes its opinion depends only on the fraction of vertices that have adopted $X$ through $f$; this resembles the fact that we consider protocols where vertices sample the opinions of their neighbors. Formally, the process evolves according to the recursion
\begin{equation*}
    X_{t+1} \stackrel{d}{=} \Bin\big(n,f(X_t/n)\big), \quad t \in \mathbb{N}_0.
\end{equation*}
We also write $Y_t = n - X_t$ for the number of vertices that have adopted opinion $Y$ at time $t$. We denote by $R_n(X_0)$ the runtime and by $D_n(X_0)$ the eventually dominating opinion, so that
\begin{equation*}
    R_n(X_0) := \inf\big\{t\in \mathbb{N}_0 : X_t =n \text{ or } Y_t =n\big\}
    \quad\text{and}\quad
    D_n(X_0):=\begin{cases}
        X,\quad &\text{if } X_t=n \text{ for some } t,\\
        Y,\quad &\text{if } Y_t=n \text{ for some } t.
    \end{cases}
\end{equation*}
Note that in all preceding definitions it is not required that $X_0 \in \mathbb{N}_0$ -- any real number in $[0,n]$ works. This will be sometimes helpful in later formulations. Throughout, we consider only specific~$f$.
\begin{definition}\label{def: majority-type}
A function $f:[0,1] \rightarrow [0,1]$ is a \emph{majority-type update function}, if
{
\begin{itemize}
\itemsep-1pt
    \item $f$ is $(m+1)$-times continuously differentiable for some $m\in \mathbb{N}_{\geq 2}$; 
    \item $f$ is convex on $[0,1/2]$;
    \item $f$ is symmetric around $1/2$, i.e., $1-f(1-x)=f(x)$ for $x\in[0,1]$;
    \item there exist $\gamma >1$ and $\beta >0$ such that, as $x \to 0$,
    \[
    f(x) = \beta x^m +O(x^{m+1})\quad \text{and}\quad f(1/2 +x) = 1/2 +\gamma  x+O(x^2).
    \]
\end{itemize} 
}
\end{definition}
For $i\in \mathbb{N}_0$, we denote the $i$-th iterate of a majority-type update function by
\[
f^{(0)}(x)=x
\quad 
\text{and}
\quad
f^{(i)}(x)=f\circ f^{(i-1)}(x), \quad  \text{for }i\geq 1.
\]
The technical assumptions on $f$ reflect the nature of the processes that we want to study and that are \emph{reasonable} to study. Indeed, since $f(0)=0$ and $f(1)=1$ the consensus states are absorbing, which emphasizes the term \emph{runtime}. By symmetry of $f$, starting with $X_0=n/2$, it holds that $\Prob(X_t =0)=\Prob(X_t=n)$ for every $t\geq0$; in particular, the process has no inherent drift towards one of the two opinions (the biased case is much easier to study by basic methods). This assumption will later allow us to show that $X_t \approx n/2 + N_t$, where $N_t$ follows a normal distribution with increasing variance in $t$. We will see that after some point in time $t$, when the process has accumulated enough bias, $X_t$ is concentrated around its mean, and so the long-term behavior will be described by iterates of $f$. 
Therefore, the assumption that $f$ is convex, behaves polynomially around $0$ and linearly in $1/2$ ensures that we understand iterates of $f$ well enough. 

The aforementioned \texttt{$k$-maj} protocols all give rise to majority-type update functions for specific~$m,\beta,\gamma$, see also Figure~\ref{fig:f_k} and at the beginning of the introduction for the actual values of the parameters. Moreover, there are also other protocols that fall in the same class. We have collected some examples in Section~\ref{sec:examples}, together with the computations leading to the values of $m,\beta,\gamma$ in each case:
\texttt{$k$-maj}, \texttt{rand-$k$-maj}, where each vertex samples a random number of neighbors and adopts the majority, and \texttt{$k$-neighb-rand}, where every vertex samples $k$ neighbors, and adopts $X$ if a specific random fraction of them has opinion $X$.

\begin{figure}[tb]
    \centering
\begin{tikzpicture}
\begin{groupplot}[
    group style={
        group size=2 by 1,
        horizontal sep=2cm,
    },
    width=7cm,
    height=5.5cm,
    tick label style={font=\small},
    label style={font=\small},
    legend style={font=\small},
    grid=both,
    grid style={dashed, gray!50, line width=0.2pt},
]

\nextgroupplot[
    xlabel={percentage of opinion $X$},
    ylabel={prob of adopting opinion $X$},
    legend pos=north west,
]
\addplot [color=blue, very thick] table [x=x, y=f3] {func_file.dat};
\addlegendentry{$f_3$}

\addplot [color=red, very thick] table [x=x, y=f5] {func_file.dat};
\addlegendentry{$f_5$}

\addplot [color=green, very thick] table [x=x, y=f7] {func_file.dat};
\addlegendentry{$f_7$}


\addplot [color=orange, very thick] table [x=x, y=f101] {func_file.dat};
\addlegendentry{$f_{101}$}

\nextgroupplot[
    xlabel={$x$},
    ylabel={density of $W$},
    legend pos=north east,
    xmin=-2, xmax=12,
]

\addplot [color=green, very thick] table [x=x, y=g7] {hxy.dat};
\addlegendentry{$k=7$}

\addplot [color=red, very thick] table [x=x, y=g5] {hxy.dat};
\addlegendentry{$k=5$}

\addplot [color=blue, very thick] table [x=x, y=g3] {hxy.dat};
\addlegendentry{$k=3$}


\end{groupplot}
\end{tikzpicture}
\caption{The left plot depicts the function $f_k$ for different $k$, which is the probability that a vertex adopts opinion $X$ in the case of \texttt{$k$-maj}. The right plot shows the probability density function of $W=h(-\log_\gamma|Z|)$ occurring in Corollary~\ref{cor: main 1} and Lemma~\ref{lem: tails Z} for several $k$.}
\label{fig:f_k}
\end{figure}

Our first main result pins down the probability that eventually all vertices agree upon some specific opinion. Here and in the sequel we denote by ${\cal N}(\mu, \sigma^2)$ a gaussian random variable with mean $\mu$ and variance $\sigma^2$.
\begin{theorem}\label{thm: X wins}
Let $d\in\mathbb{R}$, $f$ be a majority-type update function and  $X_0=n/2 + d\sqrt{n}+o(\sqrt{n})$ be the initial number of vertices with opinion $X$. Let $Z \stackrel{d}{=} \mathcal{N}\big(d,1/4(\gamma^2-1)\big)$. Then, as $n\to \infty$,
\[
    \Prob\big(D_n(X_0) = X\big) = \Prob\big(Z \geq 0\big)+o(1)
    \quad
    \text{and}
    \quad
    \Prob\big(D_n(X_0) = Y\big) = \Prob\big(Z \leq 0\big)+o(1).
\]
\end{theorem}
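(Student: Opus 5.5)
We track the normalized bias $B_t := (X_t - n/2)/\sqrt{n}$ and argue in three phases: a linear phase in which $B_t$ behaves like a Gaussian AR(1) process, a growth phase in which the bias becomes macroscopic, and a final deterministic phase.

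\textbf{Linear phase.} Near $1/2$ we have $f(1/2+x) = 1/2 + \gamma x + O(x^2)$. Hence, conditionally on $X_t$,
\[
B_{t+1} = \gamma B_t + O(B_t^2/\sqrt{n}) + \xi_{t+1},
\]
where $\xi_{t+1}$ is the centered binomial fluctuation divided by $\sqrt n$. Its conditional variance is $f(1-f) = 1/4 + O(B_t^2/n)$. Dividing by $\gamma^{t+1}$, the normalized variable $M_t := \gamma^{-t} B_t$ satisfies
\[
M_{t+1} = M_t + \gamma^{-(t+1)}\xi_{t+1} + \text{error}.
\]
So $M_t$ is approximately a martingale with independent increments. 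By a Lindeberg CLT for martingale differences, $M_t - B_0$ converges in distribution to a centered Gaussian. Its variance is
\[
\sum_{s\ge1}\gamma^{-2s}/4 = \frac{1}{4(\gamma^2-1)}.
\]
The $o(\sqrt n)$ correction to $X_0$ only shifts $B_0$ by $o(1)$. We run this phase up to a time $T = T(n)$ with $\gamma^{T} = n^{\delta}$ for a small $\delta>0$. Up to that time $|B_t| = O_\Prob(n^{\delta})$. Therefore the accumulated quadratic error is
\[
\sum_{t<T} \gamma^{-t} B_t^2/\sqrt n = O_\Prob(n^{2\delta-1/2}) = o(1).
\]
This gives $M_T \to Z$ in distribution with $Z \sim \mathcal N(d, 1/(4(\gamma^2-1)))$.

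\textbf{Growth phase.} Conditionally on $|M_T| \ge \eta$ for fixed $\eta>0$, we have $|B_T| \ge \eta n^{\delta}$, i.e.\ a bias of $\eta n^{1/2+\delta}$ vertices. From there we show that the sign of $B_t$ never changes and that consensus is reached on that side whp. Say $B_T>0$. Convexity and symmetry of $f$ give $f(x) \ge 1/2 + \gamma'(x-1/2)$ with $\gamma'>1$ on a neighborhood of $1/2$, and $f(x) > x$ on all of $(1/2,1)$. The binomial fluctuations in each round are $O(\sqrt{n\log n})$, which is negligible against a bias of order $n^{1/2+\delta}$. Chernoff bounds plus a union bound over the $O(\log n)$ remaining rounds then show that $X_t/n$ follows the deterministic iterates $f^{(i)}$ up to relative errors, and the bias stays positive. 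Once $Y_t = \varepsilon n$ is small, $f(x) = \beta x^m + O(x^{m+1})$ with $m\ge2$ gives $\E[Y_{t+1}] \approx \beta n (Y_t/n)^m$. This collapses doubly exponentially: within $O(\log\log n)$ rounds $\E[Y_{t+1}] = o(1)$, and Markov's inequality gives $Y=0$ whp. Since $f(0)=0$ and $f(1)=1$, consensus states are absorbing. Hence $D_n(X_0) = X$ whp on $\{M_T \ge \eta\}$, and symmetrically $D_n(X_0) = Y$ on $\{M_T \le -\eta\}$.

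\textbf{Conclusion.} The limit $Z$ has a continuous distribution, so $\Prob(|M_T|<\eta) \to \Prob(|Z|<\eta)$, which tends to $0$ as $\eta\downarrow0$. Letting $n\to\infty$ first and then $\eta \to 0$ yields the statement of Theorem~\ref{thm: X wins}.

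The main obstacle is making the linear-phase CLT rigorous. We must control the nonlinear errors uniformly, handle the fact that conditional variances depend on $X_t$, and also cover the event that $|B_t|$ becomes large early. That last event is harmless, since it only helps separation, but it has to be dealt with via a stopping time. I would first try a coupling with the linear recursion, stopping the first time $|B_t|$ exceeds $n^{\delta}$, and then apply the martingale CLT to the stopped process.
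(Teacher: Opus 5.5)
Your argument is correct, but it is organised differently from the paper's.

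\textbf{The paper's route.} The paper never runs a CLT over a growing number of rounds. Lemma~\ref{lem: CLT} is proved only for a fixed time $t_1=\log_\gamma C$, where the bias is about $C|Z_C|\sqrt n$. Lemma~\ref{lem: MiddlePhases} then uses Hoeffding with deviation $t\sqrt{Cn}$ in round $t$, which accumulates to $c_\gamma\gamma^t\sqrt{Cn}$ with probability at least $1-e^{-C}$. So afterwards the process tracks $f^{(t)}(X_{t_1}/n)$ with an error that is only a $1/(\sqrt C|Z_C|)$-fraction of the bias. Hence the winner is the majority at $t_1$ with probability $1-o^\star(1)$. The theorem follows from the CLT at $t_1$ and $Z_{t_1}/C\to Z$, letting $n\to\infty$ and then $C\to\infty$.

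\textbf{Your route.} You move the hand-off to a growing time with $\gamma^T=n^\delta$ and obtain $\gamma^{-T}B_T\to Z$ in a single step. You then use the polynomial margin $n^{1/2+\delta}\gg\sqrt{n\log n}$ to finish with crude Chernoff and union bounds. This gives the conclusion whp rather than with probability $1-o^\star(1)$.

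\textbf{What each buys.} The paper's route keeps the CLT elementary (fixed $t$, induction). It also reuses a $C$-dependent concentration lemma that it needs anyway for the runtime law, at the cost of the iterated-limit bookkeeping. Yours is self-contained for the winner question, but it concentrates all the difficulty in the growing-horizon CLT.

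\textbf{Two remarks on that CLT.} First, the round-level increments $\gamma^{-s}\xi_s$ violate the conditional Lindeberg condition, because the first ones are of order one and nondegenerate. Apply the martingale CLT instead to the vertex-level array: split round $s$ into its $n$ conditionally independent centred Bernoulli terms, each bounded by $n^{-1/2}$.

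Second, the stopping time is unnecessary. The threshold $n^\delta$ you propose would in any case be too low: since $\gamma^T=n^\delta$, that event is essentially $\{|M_T|\gtrsim 1\}$, which has probability bounded away from zero. Instead, $|f(1/2+x)-1/2|\le\gamma|x|$ gives $\E[B_{s+1}^2]\le\gamma^2\E[B_s^2]+1/4$, hence $\E[B_s^2]=O(\gamma^{2s})$. Consequently the quadratic remainder in your $M_T$ has $L^1$-norm $O(\sum_{s\le T}\gamma^{s}/\sqrt n)=O(n^{\delta-1/2})$. Likewise, the conditional variances differ from $\sum_{s\le T}\gamma^{-2s}/4$ by $O_{\Prob}(T/n)$. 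Both bounds hold without any stopping.
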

Thus, both opinions have a chance of becoming eventually dominating, even if one of the opinions has a (slight) dominance in the beginning. Our second main result describes the limit distribution of the runtime. As preparation, for a majority-type update function $f$ we define the auxiliary function
\[
    g:\mathbb{R}\to \mathbb{R},
    \quad 
    x \mapsto 1-x+\lim_{a\to \infty, a\in\mathbb{N}} \lim_{b \to \infty, b\in \mathbb{N}} b-a- \log_m|\ln f^{(b)}(1/2-\gamma^{-a-x})|.
\]
In Section \ref{subsec: technical preliminaries}
we will see that $g$ is well-defined and continuous. Moreover, we prove that $g$ is $1$-periodic and $\sup g-\inf g < 1$. 
With this at hand we obtain the following result.

\begin{theorem}
\label{thm: main result}
Let $d\in\mathbb{R}$, $f$ be a majority-type update function and  $X_0=n/2 + d\sqrt{n}+o(\sqrt{n})$ be the initial number of vertices with opinion $X$. Let $Z \stackrel{d}{=} \mathcal{N}\big(d,1/4(\gamma^2-1)\big)$. Then, as $n\to \infty$,
\[
    \sup_{s\in\mathbb{Z}}
    \left| \Prob\left(R_n(X_0) \geq s\right) -
    \Prob\Big(
        Z_n + \log_m\ln n + g(Z_n) \geq s
    \Big)
    \right|
    = o(1), 
    \quad
    \text{where}
    \quad
    Z_n :=\tfrac{1}{2}\log_\gamma(n/Z^2).
\]
\end{theorem}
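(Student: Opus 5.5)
We split the evolution into three phases and couple each with a deterministic or Gaussian description.

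\emph{Phase 1 (Gaussian linear phase).} Write $X_t = n/2 + \sqrt{n}\,B_t$. As long as $|B_t| = o(\sqrt n)$, the expansion $f(1/2+x)=1/2+\gamma x+O(x^2)$ gives
\[
B_{t+1} = \gamma B_t + \xi_{t+1} + O(B_t^2/\sqrt n),
\]
where $\xi_{t+1}$ is an approximately $\mathcal N(0,1/4)$ binomial fluctuation (CLT), independent of the past. Iterating, $\gamma^{-t}B_t \to d + \sum_{j\ge1}\gamma^{-j}\xi_j$, whose limit has distribution $\mathcal N\big(d,\tfrac14\sum_j\gamma^{-2j}\big)=\mathcal N\big(d,1/4(\gamma^2-1)\big)$, i.e.\ $Z$. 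Concretely, I would run this phase up to a time $t_1=\lfloor \tfrac12\log_\gamma n\rfloor - a$ with $a=a(n)\to\infty$ slowly. Then $B_{t_1}=\gamma^{t_1}(Z+o_\Prob(1))$, the quadratic error stays negligible, and $X_{t_1}/n = 1/2 + \gamma^{-a'}\,\mathrm{sgn}(Z)$ with $a' = \tfrac12\log_\gamma n - t_1 - \log_\gamma|Z| + o(1)$. Martingale/second-moment bounds on the accumulated error handle this phase. The $o(\sqrt n)$ adversarial changes contribute only $o(1)$ to $\gamma^{-t}B_t$ after summation, so they are absorbed here as well. This phase also yields Theorem~\ref{thm: X wins}.

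\emph{Phase 2 (deterministic iteration).} Once the bias is of order $\gamma^{-a}n$ with $a$ fixed but large, Chebyshev together with a union bound shows that $X_{t+1}/n = f(X_t/n)(1+o(1))$ remains valid while $f^{(i)}$ of the minority fraction stays above $n^{-1+\varepsilon}$. The relative errors compound only over $O(\log\log n)$ steps, and near $0$ the map $f(x)\approx\beta x^m$ is contracting in $\ln$-scale, so relative errors do not blow up. The minority fraction is therefore tracked by $f^{(b)}(1/2-\gamma^{-a-x})$ with $x$ encoding the fractional part.

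\emph{Phase 3 (absorption).} Once the minority count is $n^{\varepsilon}$, one or two more rounds finish with a count of order $n\beta (y/n)^m$. The runtime is thus $t_1$ plus the first $b$ such that $n f^{(b)}(\cdot)<1$ (up to a probability that becomes a step error vanishing in the sup over $s$).

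Solving $\ln f^{(b)} \approx -\ln n$ via the definition of $g$ gives $b \approx a + \log_m \ln n + g(x) - 1 + x$ type corrections. Assembling, $R_n = Z_n + \log_m\ln n + g(Z_n) + o_\Prob(1)$ off integer-boundary events, whose probability vanishes because $Z$ has a continuous density. Uniformity in $s$ then follows since the limit is tight.

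The main obstacle is the handoff between Phases 1 and 2: the approximation must be uniform over the double limit in $a,b$ defining $g$, with errors in $a'$ of $o(1)$ transferred through $\log_m|\ln f^{(b)}|$ without amplification. For this I would use continuity of $g$ and a monotone sandwich, bounding the process between the deterministic iterates started at $1/2-\gamma^{-a-x\pm\delta}$.
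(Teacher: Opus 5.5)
Your three-phase plan is a viable alternative to the paper's route. The paper stops the Gaussian analysis at the bounded time $t_1=\log_\gamma C$. It then tracks the process by iterates of $f$ with a Hoeffding error of size $c_\gamma\gamma^t\sqrt{Cn}$, and sandwiches the remaining runtime between $R_n(Ln)$ and $R_n(Un)$ by stochastic monotonicity. You instead run a martingale linear phase for $\Theta(\log n)$ rounds and track the minority directly to absorption.

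\textbf{The gap is in your last step.} You say the integer-boundary events are negligible ``because $Z$ has a continuous density,'' but that is not enough. What you need is uniform anti-concentration of $V_n:=h(Z_n)+\log_m\ln n$, where $h(x)=g(x)+x$: namely $\limsup_{n\to\infty}\sup_{s\in\mathbb{Z}}\Prob(|V_n-s|\le\varepsilon)\to0$ as $\varepsilon\to0$.
\begin{itemize}
\item If $h$ were constant on some interval, $h(Z_n)$ would have an atom, however smooth the law of $Z$.
\item Whenever that atom sits at an integer, your sandwich $\lceil V_n-1-o(1)\rceil\le R_n\le\lceil V_n-1+o(1)\rceil$ does not determine $R_n$, so the comparison with $\Prob(V_n\ge s)$ does not follow.
\item Monotonicity of $f$ only shows that $h$ is non-decreasing. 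Each pre-limit expression $b-a-\log_m|\ln f^{(b)}(1/2-\gamma^{-a-x})|$ is increasing in $x$, but limits of increasing functions can have flat pieces.
\end{itemize}
The paper supplies the missing ingredient in Lemma~\ref{lem: prop of h}. It bounds the $x$-derivative of $m^{-(b-a)}\ln f^{(b)}(1/2-\gamma^{-a-x})$ from below by a positive constant, in the iterated $\liminf$ over $a,b$. For the first $a$ iterations it uses $f'(1/2-y)=\gamma-O(y^2)$. For the remaining ones it uses $f'(x)^{m/(m-1)}/\big(mf'(f(x))^{1/(m-1)}\big)=1+O(x)$. Strict monotonicity, continuity and $h(x+1)=h(x)+1$ then give a uniformly continuous inverse. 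Together with the bounded density of $\log_\gamma|Z|$, this yields the anti-concentration. It is this, not tightness, that makes your bound uniform in $s$.

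\textbf{Properties of $g$.} You use the existence of the double limit defining $g$, and its continuity, as black boxes. These are a substantial part of the argument: Lemma~\ref{lem: prop of g} proves them via a Schr\"oder-type linearization $M_0(f(x))=\gamma M_0(x)$ near $1/2$ and the functional equation $W(x)=W(f(x))/m$ near $0$.

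\textbf{Fixed versus growing $a$.} You let $a=a(n)\to\infty$ in Phase~1 but treat $a$ as fixed in Phase~2. With growing $a$ you need the convergence in the definition of $g$ along your pairs $(a(n),b)$, uniformly in $x$; continuity of the iterated limit alone does not give this. A diagonal choice of slowly growing $a(n)$ would work. The simpler repair is the paper's: keep $a$ fixed, accept the $O(\gamma^{-a})$ relative error from the quadratic term, absorb it into your sandwich at $1/2-\gamma^{-a-x\pm\delta}$, and take iterated limits.

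\textbf{Minor point.} Near $0$ the map is expanding, not contracting, in $\ln$-scale. Relative errors in $Y$ grow by roughly a factor $m$ per step. This is harmless only because $O(\log\log n)$ steps give a polylogarithmic factor, which is swamped by the $n^{-\Omega(1)}$ per-step fluctuations.
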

The theorem has various immediate consequences. Without digesting the full details, it readily 
establishes that the runtime centers around $\frac12\log_\gamma n + \log_m\ln n$ and that it has stochastically bounded fluctuations (that we can describe precisely in terms of a (function of a) gaussian random variable).
\begin{corollary}\label{cor: main 2}
In the situation of Theorem~\ref{thm: main result},
\[
    R_n(X_0)=  \tfrac{1}{2}\log_\gamma n + \log_m\ln n + O_\Prob(1).
\]
\end{corollary}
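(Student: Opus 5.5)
We derive the corollary directly from Theorem~\ref{thm: main result}, using two facts stated just before it: $g$ is continuous and $1$-periodic, hence bounded, say $|g|\le C_g$. Write
\[
Z_n + \log_m\ln n + g(Z_n) = \tfrac12\log_\gamma n + \log_m \ln n - \log_\gamma |Z| + g(Z_n),
\]
using $Z_n=\tfrac12\log_\gamma(n/Z^2)$. Set $\mu_n:=\tfrac12\log_\gamma n+\log_m\ln n$ and $V_n:=-\log_\gamma|Z|+g(Z_n)$. Then $|V_n|\le |\log_\gamma|Z||+C_g$. Since $Z$ is a nondegenerate gaussian with $n$-independent law ($\gamma>1$), the variable $\log_\gamma|Z|$ is a.s.\ finite with a fixed distribution. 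Hence for every $\varepsilon>0$ there is $K_\varepsilon$ with $\Prob(|V_n|\ge K_\varepsilon)\le\varepsilon/4$ for all $n$. Concretely, this requires $\Prob(|Z|<\gamma^{-K})$ and $\Prob(|Z|>\gamma^{K})$ to be small, which holds because $Z$ has a bounded density and finite variance.

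Next we transfer this to $R_n$ via the uniform estimate in Theorem~\ref{thm: main result}. Let $\delta_n$ denote the supremum there, so $\delta_n=o(1)$. For the upper tail, take $s=\lceil \mu_n+K_\varepsilon\rceil\in\mathbb{Z}$. Then
\[
\Prob(R_n\ge \mu_n+K_\varepsilon+1)\le \Prob(R_n\ge s)\le \Prob(\mu_n+V_n\ge s)+\delta_n\le \Prob(V_n\ge K_\varepsilon)+\delta_n .
\]
For the lower tail, take $s=\lfloor \mu_n-K_\varepsilon\rfloor$ and use complements, which is legitimate since the estimate holds for every integer $s$:
\[
\Prob(R_n< s)\le \Prob(\mu_n+V_n< s)+\delta_n\le \Prob(V_n<-K_\varepsilon)+\delta_n .
\]
It follows that $\Prob(|R_n-\mu_n|\ge K_\varepsilon+1)\le \varepsilon/2+2\delta_n\le\varepsilon$ for all $n\ge n_\varepsilon$. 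This is precisely $R_n-\mu_n=O_\Prob(1)$.

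The only potential obstacle is the boundedness of $g$, but this is supplied by the stated periodicity and continuity (and also by $\sup g-\inf g<1$ once $g$ is known to be finite somewhere). The remaining care is purely bookkeeping: $R_n$ is integer-valued while the comparison variable is real, which costs the extra $+1$ and the floor/ceiling choices of $s$ above. The case $Z=0$ has probability zero, so $Z_n$ is a.s.\ well defined.
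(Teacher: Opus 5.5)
Your proof is correct and follows the same route as the paper: $g$ is bounded because it is continuous and $1$-periodic, $Z_n=\tfrac12\log_\gamma n-\log_\gamma|Z|$ with $\log_\gamma|Z|=O_\Prob(1)$, and the uniform-in-$s$ estimate of Theorem~\ref{thm: main result} transfers this to $R_n(X_0)$. You also spell out the integer-rounding bookkeeping that the paper leaves implicit, and you correctly use $Z_n=\tfrac12\log_\gamma n+O_\Prob(1)$, whereas the paper's one-line proof has a slip there that reads $\log_m\ln n$.
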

To get a very informal justification of why such a result is expected, note that $f$ being a majority-type update function implies for small $x$ that a) if the fraction of vertices with opinion $X$ is $1/2 + x$, then after one round it is in expectation roughly $1/2 + \gamma x$ and b) if it is $1-x$, then after one round it is roughly $1-\beta x^m$. So, if we start with $x = \Theta(1/\sqrt{n})$, then after about $\frac{1}{2}\log_\gamma n$ rounds the fraction will be significantly larger than $1/2$, and after a constant number of rounds it will be close to 1. Then, due to b), a doubly exponential race to 1 will be initiated, resulting in the doubly logarithmic term. The fine details and the fluctuations of the process are then captured by Theorem~\ref{thm: main result}.

In a very reduced form, Corollary~\ref{cor: main 2} implies that, whp, $R_n(X_0) \sim \frac12\log_\gamma n$.
In particular, this yields the coarser bound $R_n(X_0) =\Theta(\ln n)$ whp, which was the best known result so far (for \texttt{$k$-maj}).
Apart from that, and this is not immediately apparent, a further and direct consequence of our main result is that there is \emph{no} limiting distribution, as the fractional parts of $\frac{1}{2}\log_\gamma n$ and $\log_m\ln n$ vary with $n$;
in order to obtain convergence, we have to restrict to specific subsequences of $n$.
We obtain the following corollary.
\begin{corollary}\label{cor: main 1}
Let $x,y\in[0,1)$ and $(n_i)_{i\in\mathbb{N}}$ be an increasing sequence in $\mathbb{N}$ such that, as $i\to\infty$,
\[
    \tfrac{1}{2}\log_\gamma n_i -\left\lfloor\tfrac{1}{2}\log_\gamma n_i \right\rfloor \to x
    \quad\text{and}\quad
    \log_m\ln n_i - \lfloor\log_m\ln n_i  \rfloor \to y.
\]
Then, in the setting of Theorem~\ref{thm: main result} and with $h(x)=g(x)+x$, in distribution, 
\[
    R_{n_i}(X_0) - \left(\left\lfloor \tfrac{1}{2}\log_\gamma n_i\right\rfloor + \lfloor \log_m\ln n_i\rfloor\right) \to \lfloor h(x-\log_\gamma|Z|) + y\rfloor.
\]
\end{corollary}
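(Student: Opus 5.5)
We derive Corollary~\ref{cor: main 1} directly from Theorem~\ref{thm: main result}, using that $g$ is continuous and $1$-periodic. Write $a_i=\lfloor\tfrac12\log_\gamma n_i\rfloor$, $b_i=\lfloor\log_m\ln n_i\rfloor$, $x_i=\tfrac12\log_\gamma n_i-a_i\to x$ and $y_i=\log_m\ln n_i-b_i\to y$. Note that $Z_{n}=\tfrac12\log_\gamma n-\log_\gamma|Z|$, which is almost surely finite since $Z$ is gaussian. By periodicity, $g(Z_{n_i})=g(x_i-\log_\gamma|Z|)$. Hence
\[
Z_{n_i}+\log_m\ln n_i+g(Z_{n_i}) = a_i+b_i + h(x_i-\log_\gamma|Z|)+y_i .
\]
Since $a_i+b_i\in\mathbb{Z}$, Theorem~\ref{thm: main result} applied with $s=a_i+b_i+j$ gives, uniformly in $j\in\mathbb{Z}$,
\[
\Prob\big(R_{n_i}(X_0)-a_i-b_i\ge j\big)=\Prob\big(h(x_i-\log_\gamma|Z|)+y_i\ge j\big)+o(1).
\]

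Next we pass to the limit inside the probability. Put $V_i=h(x_i-\log_\gamma|Z|)+y_i$ and $V=h(x-\log_\gamma|Z|)+y$. Because $h(u)=g(u)+u$ with $g$ continuous, $V_i\to V$ almost surely. Then $\Prob(V_i\ge j)\to\Prob(V\ge j)$ for every integer $j$ with $\Prob(V=j)=0$.

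The main point is to check that $\Prob(V=j)=0$ for each integer $j$, i.e.\ that $V$ has no atoms. Since $-\log_\gamma|Z|$ has a continuous density, it suffices that each level set $\{u: h(u)=c\}$ has Lebesgue measure zero. I would argue this from the structure of $h$. By its definition, $h(u)=1+\lim_a\lim_b\big(b-a-\log_m|\ln f^{(b)}(1/2-\gamma^{-a-u})|\big)$, and $f^{(b)}(1/2-\gamma^{-a-u})$ is strictly monotone in $u$ for each fixed $a,b$, because $f$ is increasing. Thus $h$ is a pointwise limit of nondecreasing functions and is therefore nondecreasing. A level set of a monotone function is an interval, so it remains to exclude flat pieces. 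For this I would use the strict increase $h(u+1)=h(u)+1$ together with the fact that each approximant is strictly monotone with uniformly controlled derivative, which should give $h$ strictly increasing. If that fails, a fallback is the analyticity-type rigidity coming from the local expansion $f(x)=\beta x^m+O(x^{m+1})$.

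With atoms excluded, $\Prob(V\ge j)=\Prob(\lfloor V\rfloor\ge j)$ for all $j$. Hence the integer-valued variables $R_{n_i}(X_0)-a_i-b_i$ have tail probabilities converging to those of $\lfloor V\rfloor$, which is exactly convergence in distribution to $\lfloor h(x-\log_\gamma|Z|)+y\rfloor$. The expected obstacle is the no-atoms step, specifically the strict monotonicity of $h$.
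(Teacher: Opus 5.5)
Your proposal is the paper's proof, step for step: Theorem~\ref{thm: main result} at $s=a_i+b_i+j$, the $1$-periodicity of $g$ to reduce to $\Prob(h(x_i-\log_\gamma|Z|)+y_i\ge j)$, passage to the limit using continuity of $h$ and the absence of atoms, and finally $\Prob(V\ge j)=\Prob(\lfloor V\rfloor\ge j)$. Your almost-sure-convergence and continuity-point argument for the limit is more explicit than the paper's.

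The no-atoms step you flag is exactly Lemma~\ref{lem: prop of h} ($h$ continuous and injective, hence strictly increasing). The paper simply cites it here, and you should too. Your sketch would not suffice on its own for two reasons:
\begin{itemize}
\item Strictly increasing approximants together with $h(u+1)=h(u)+1$ do not exclude flat pieces; a Cantor-type staircase $\lfloor u\rfloor+c(\{u\})$ has both properties.
\item $f$ is only $C^{m+1}$, so no analyticity fallback is available.
\end{itemize}
What is actually needed is a positive lower bound on the approximants' derivatives that is uniform in $a$ and $b$. That is precisely what the proof of Lemma~\ref{lem: prop of h} establishes.
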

We can extract some information about the limiting distribution, see also Figure~\ref{fig:f_k}. In particular, it has the following (asymmetric) tail properties.
\begin{lemma}\label{lem: tails Z}
Let $x,y\in [0,1)$ and $H\stackrel{d}{=}\lfloor h(x-\log_\gamma|Z|) + y\rfloor$. Then, as $t\to \infty$,
\[
    \Prob(H\geq t) = \Theta\left(\gamma^{-t}\right)
    \quad\text{and}\quad 
    \Prob(H\leq -t) = \exp\left(\Theta\left(-\gamma^{2t}\right)\right).
\]
\end{lemma}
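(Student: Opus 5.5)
Since $h(u)=g(u)+u$ with $g$ continuous, $1$-periodic and hence bounded, there is a constant $C$ (depending only on $f$) with $|h(u)-u|\le C$ for all $u\in\mathbb{R}$. Also $x,y\in[0,1)$ are fixed, so $\lfloor\cdot\rfloor$ changes the value by less than $1$. Hence
\[
    -\log_\gamma|Z| - C' \;\le\; H \;\le\; -\log_\gamma|Z| + C'
\]
for a constant $C'$ (e.g.\ $C'=C+2$). Both tail estimates therefore reduce to tail estimates for $V:=-\log_\gamma|Z|$, where $Z\stackrel{d}{=}\mathcal{N}(d,\sigma^2)$ with $\sigma^2=1/4(\gamma^2-1)$. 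Shifting $t$ by $\pm C'$ changes $\gamma^{-t}$ and $\gamma^{2t}$ only by constant factors, and such factors are absorbed by the $\Theta$-notation (inside the exponent in the second case).

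For the upper tail, $\Prob(V\ge s)=\Prob(|Z|\le \gamma^{-s})$. As $s\to\infty$ this equals $\int_{-\gamma^{-s}}^{\gamma^{-s}}\phi(z)\,dz$, where $\phi$ is the density of $Z$, continuous and strictly positive at $0$ with $\phi(0)=(2\pi\sigma^2)^{-1/2}e^{-d^2/2\sigma^2}>0$. Hence $\Prob(V\ge s)=(2\phi(0)+o(1))\gamma^{-s}=\Theta(\gamma^{-s})$. Sandwiching gives
\[
    \Prob(V\ge t+C')\le \Prob(H\ge t)\le \Prob(V\ge t-C'),
\]
and both bounds are $\Theta(\gamma^{-t})$.

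For the lower tail, $\Prob(V\le -s)=\Prob(|Z|\ge \gamma^{s})$. Standard Gaussian tail bounds (Mills ratio) give, for large $r$,
\[
    \Prob(|Z|\ge r)=\exp\left(-\frac{(r-|d|)^2}{2\sigma^2}(1+o(1))\right),
\]
including the polynomial prefactor inside the $o(1)$ in the exponent, and this is $\exp(-\Theta(r^2))$. With $r=\gamma^{s}$ we get $\exp(-\Theta(\gamma^{2s}))$. Then $\Prob(H\le -t)$ is sandwiched between $\Prob(V\le -t-C')$ and $\Prob(V\le -t+C')$, namely between $\exp(-\Theta(\gamma^{2t}\gamma^{2C'}))$ and $\exp(-\Theta(\gamma^{2t}\gamma^{-2C'}))$. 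Both are $\exp(\Theta(-\gamma^{2t}))$.

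The only nontrivial input is the boundedness of $g$, which the paper establishes in Section~\ref{subsec: technical preliminaries} (continuity and $1$-periodicity, indeed $\sup g-\inf g<1$). So I expect no real obstacle. The one point to handle carefully is that the $\Theta$ in the lower tail sits inside the exponent: this is exactly why the multiplicative constants $\gamma^{\pm 2C'}$ coming from the additive shifts are harmless there.
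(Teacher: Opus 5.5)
Your proposal is correct and follows essentially the same route as the paper. The paper also uses that the continuous, $1$-periodic $g$ is bounded, so that $H$ differs from $-\log_\gamma|Z|$ by a bounded amount, and then reduces both tails to the Gaussian estimates $\Prob(|Z|\le z)=\Theta(z)$ as $z\to 0$ and $\Prob(|Z|\ge z)=\exp(-\Theta(z^2))$ as $z\to\infty$; your version is simply more explicit about the sandwiching constants and about why the factors $\gamma^{\pm 2C'}$ are absorbed inside the exponent.
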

Up to now we have only considered initial configurations where the bias is $O(\sqrt{n})$. However, our forthcoming analysis also applies to the case where we start with a bias of $\omega(\sqrt{n})$ (and this is actually the less elaborate case). Here we obtain the very strong result that the runtime concentrates on (at most) two values.
\begin{corollary}\label{cor: concentration for large d} 
Let $f$ be a majority-type update function and  $X_0=n/2 + d\sqrt{n}$ be the initial number of vertices with opinion $X$, where $d = \omega(1)$ and $0<d \le (1/2-\varepsilon)\sqrt{n}$ for some $0<\varepsilon < 1/2$.
Let $$s_{n,d}:=\left\lceil \tfrac{1}{2}\log_\gamma (n/d^2)+\log_m\ln n+g\left(\tfrac{1}{2}\log_\gamma(n/d^2)\right)\right\rceil.$$
Then, whp,
\[
    D_n(X_0) = X
    \quad\text{and}\quad
    R_n(X_0)\in \{s_{n,d}-1, s_{n,d}\}.
\]
\end{corollary}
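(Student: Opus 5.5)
The plan is to show that, when $d=\omega(1)$, the process follows the deterministic iteration $x\mapsto f(x)$ started at $x_0:=X_0/n=1/2+d/\sqrt n$ with small relative error all the way to consensus. The runtime then becomes the first $b$ for which $n\,(1-f^{(b)}(x_0))$ drops below $1$. This $b$ is then read off from the definition of $g$, using that the convergence in the definition of $g$ is uniform in $x$ (locally uniform suffices, by periodicity), which should follow from Section~\ref{subsec: technical preliminaries}. Throughout, write $a+x:=Z_n^d:=\tfrac12\log_\gamma(n/d^2)$ with $a\in\mathbb{N}$ and $x\in[0,1)$, so that $x_0=1/2+\gamma^{-a-x}$. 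By symmetry of $f$ we also have $1-f^{(b)}(x_0)=f^{(b)}(1/2-\gamma^{-a-x})$.

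\textbf{Phase 1 (linear regime).} Write $X_t=n/2+\Delta_t$. Conditionally on $X_t$, the mean of $\Delta_{t+1}$ is $n(f(X_t/n)-1/2)$ and its fluctuation is $O_\Prob(\sqrt n)$. As long as $\Delta_t\ge d\sqrt n$, the relative error per round is $O(1/d)$ times a geometrically decaying factor $\gamma^{-t/2}$, because $\Delta_t$ grows like $\gamma^t d\sqrt n$. Summing over rounds gives $X_t/n=f^{(t)}(x_0)$ up to relative error $o(1)$ in the bias, uniformly until the bias reaches a small constant $\delta n$. A fixed number of further rounds then brings $Y_t/n$ below any constant $\eta$, still with multiplicative error $1+o(1)$ (Chebyshev suffices here). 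In particular this already gives $D_n(X_0)=X$ whp.

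\textbf{Phase 2 (doubly exponential regime).} Let $y_t=Y_t/n$. Then $Y_{t+1}\sim\Bin(n,f(y_t))$ with $f(y)=\beta y^m(1+O(y))$. While $nf(y_t)\ge \log^2 n$, Chernoff bounds give $Y_{t+1}=nf(y_t)(1+O(\log^{-1/2}n))$, with a union bound over the $O(\log\log n)$ rounds. A relative error $1+\epsilon$ at step $j$ becomes $(1+\epsilon)^{m^{i}}$ after $i$ further steps. On the scale $\log_m|\ln y|$, however, this is only an additive shift of order $\epsilon/|\ln y|=o(1)$, because $|\ln y_t|$ itself grows by the factor $m$. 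Hence $\log_m|\ln y_t|=\log_m|\ln f^{(t)}(1/2-\gamma^{-a-x})|+o(1)$ up to the last polylogarithmic step.

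\textbf{Endgame.} Once $nf(y_t)\le\log^2 n$, the next value has mean $\mu_t=n f(y_t)$. Since $y_t\le n^{-1+o(1)}$ at that point, the following mean is $\mu_{t+1}=n^{1-m+o(1)}\to0$, so consensus occurs whp within one or two rounds. It occurs at step $t+1$ exactly when $\mathrm{Bin}(n,f(y_t))=0$, which is whp determined unless $\mu_t=\Theta(1)$.

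Combining the phases, consensus happens at the first $b$ with $\log_m|\ln f^{(b)}(1/2-\gamma^{-a-x})|\ge \log_m\ln n+o(1)$. By definition of $g$,
\[
b-a-\log_m|\ln f^{(b)}(1/2-\gamma^{-a-x})| = g(x)-1+x+o(1).
\]
Solving for $b$ gives the threshold
\[
b^\ast=Z_n^d+\log_m\ln n+g(x)-1+o(1),
\]
and $g(x)=g(Z_n^d)$ by $1$-periodicity. The runtime is therefore $\lceil b^\ast\rceil$ up to the $o(1)$ error, that is, one of $s_{n,d}-1$ and $s_{n,d}$. The two-value ambiguity comes exactly from the $o(1)$ terms and the $\mu_t=\Theta(1)$ endgame when $b^\ast$ is close to an integer.

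The main obstacle is the uniformity of the error transfer: showing that the accumulated relative errors from Phase 1 (of size $O(1/d)$) and the Chernoff errors in Phase 2 translate into an $o(1)$ additive error in $\log_m|\ln\cdot|$. This has to hold uniformly in $a,x$, and it requires the limit in the definition of $g$ to be uniform in $x$ and to tolerate $o(1)$ perturbations of $x$. For that step I would first establish continuity of $g$ together with a bound $|\partial_x\log_m|\ln f^{(b)}(1/2-\gamma^{-a-x})||\le C$, uniformly in large $a$ and $b$.
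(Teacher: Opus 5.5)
Your route differs from the paper's. The paper reruns the pipeline of Theorem~\ref{thm: main result} with $t_1=0$. Lemmas~\ref{lem: T_2}, \ref{lem: MiddlePhases} and~\ref{lem: T_2 consequence} sandwich $X_{t_3}$ between $\ell n$ and $un$, with $a,b,C$ \emph{fixed} while $n\to\infty$. The monotone coupling of Lemma~\ref{lem: lower and upper bound runtime} and the endgame Lemma~\ref{lem: lower and upper bound runtime 2} then handle the remaining $\approx\log_m\ln n$ rounds, and Lemma~\ref{lem: l and u close} merges the two bounds in the $o^\star(1)$ sense.

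You instead follow the trajectory all the way to consensus. You use $d=\omega(1)$ directly (relative error $O(1/d)$, no auxiliary $C$), multiplicative Chernoff bounds in the doubly exponential phase, and an endgame at $nf(y_t)=\Theta(1)$. You then read off the runtime with $a=\lfloor Z_n^d\rfloor$ and $b\approx a+\log_m\ln n$, both growing with $n$. This is more self-contained and shows exactly where the two-value ambiguity comes from.

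The price is something the paper's ordering of limits avoids: you need the \emph{joint} limit $a\to\infty$, $b-a\to\infty$ in the definition of $g$, uniformly in $x\in[0,1)$. Lemmas~\ref{lem: prop of g} and~\ref{lem: prop of h} only treat the iterated limit, so this does not follow from the preliminaries and must be proved separately. One way is to write $b-a-\log_m|\ln f^{(b)}(1/2-\gamma^{-a-x})|=-\log_m|m^{-(b-a)}\ln f^{(b-a)}(u_a(x))|$ with $u_a(x)=f^{(a)}(1/2-\gamma^{-a-x})$. Then:
\begin{itemize}
\item $u_a(x)$ stays in a fixed compact subset of $(0,1/2)$ for large $a$, as in the proof of Lemma~\ref{lem: prop of h};
\item on that set the series tail is $O(m^{-(b-a)})$ uniformly;
\item $u_a$ converges uniformly in $x$ (linearization at the repelling fixed point $1/2$).
\end{itemize}
You correctly single this out as the crux.

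Two steps do not give the statement as written, and the paper's proof has the same two defects.

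\textbf{The regime $d=\Theta(\sqrt n)$.} The identity $b-a-\log_m|\ln f^{(b)}(1/2-\gamma^{-a-x})|=g(x)-1+x+o(1)$ requires $a=\lfloor\frac12\log_\gamma(n/d^2)\rfloor\to\infty$, i.e.\ $d=o(\sqrt n)$. If $d=c\sqrt n$ with $c$ fixed, $a$ stays bounded. The runtime is then governed by $\lim_{b}\bigl(b-\log_m|\ln f^{(b)}(1/2-c)|\bigr)$ itself. This differs from the corresponding term $\frac12\log_\gamma(n/d^2)+g(\frac12\log_\gamma(n/d^2))-1$ by a constant that is in general nonzero. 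Essentially it is $\log_\gamma\bigl(M_0(1/2-c)/c\bigr)$, with $M_0$ from the proof of Lemma~\ref{lem: prop of g}. A rough hand computation for \texttt{$3$-maj} with $c=0.4$ gives a shift of about $0.6$, which would push $R_n$ to $s_{n,d}-2$ along suitable subsequences of $n$. The paper needs $T_2=\lfloor\frac12\log_\gamma(n/d^2)\rfloor-a\ge 0$ for arbitrarily large $a$, so it is limited to $d=o(\sqrt n)$ in the same way.

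\textbf{The final rounding.} Let $Q$ be the expression inside the ceiling defining $s_{n,d}$. What you actually prove is $R_n\in\{\lceil Q-1-\epsilon_n\rceil,\lceil Q-1+\epsilon_n\rceil\}$ whp, for some $\epsilon_n\to0$. When $Q$ lies within $\epsilon_n$ \emph{above} an integer, this pair is $\{s_{n,d}-2,s_{n,d}-1\}$. So your concluding step, ``that is, one of $s_{n,d}-1$ and $s_{n,d}$'', does not follow. Stated carefully, both arguments give, for $d=o(\sqrt n)$, concentration on two consecutive integers. The runtime equals $s_{n,d}-1$ unless $Q$ is within $o(1)$ of an integer.
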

Finally, our analysis is robust enough to deal with an adversary who may change up to $o(\sqrt{n})$ opinions in every round; we elaborate on this more in the proof overview.

\paragraph{Further Related Work.} Apart from the previously described \texttt{$k$-maj} protocols for $k\geq 3$, the \texttt{$1$-maj} protocol, also known as the \texttt{voter model}~\cite{cooperetal,HASSIN2001248,kanade,nakata}, and the \texttt{$2$-choice} protocol \cite{cooper_et_al:LIPIcs.DISC.2017.13,crucianietal,cruciani:hal-02002462,twochoicestabconsensus}, where every vertex adopts the majority of its own opinion and that of two randomly sampled neighbors, have been extensively studied.
The runtime decreases from being linear in case of the \texttt{voter model} \cite{berenbrink2016boundsvotermodeldynamic} to logarithmic in the \texttt{$2$-choice} setting \cite{GhaffariLengler}. 
In the aforementioned work \cite{GhaffariLengler}, improving upon previous work \cite{becchettilowerbound,hierarchyconj,ElsasserFKMT16,elsaesseretal}, Ghaffari and Lengler show that even in the presence of an adversary of size $O(\sqrt{n})$, both \texttt{$2$-choice} and \texttt{$3$-maj} still finish in $O(\ln n)$ rounds whp. Their results in fact apply to a more general setting in which the number of opinions $\lambda=\lambda(n)$ exceeds two. In particular,
for \texttt{$2$-choice} with $\lambda=O(\sqrt{n/\ln n})$ and \texttt{$3$-maj} with $\lambda =O(n^{1/3}/\sqrt{\ln n})$, they show that the protocols finish whp in $O(\lambda\ln n)$ rounds even in the presence of an $O(\sqrt{n}/\lambda^{1.5})$ adversary. 
Shimizu and Shiraga \cite{shimizu25} prove analogous results without an adversary for a larger range of $\lambda$. Moreover, for arbitrary $\lambda$, they show that \texttt{$2$-choice} reaches consensus whp in $O(n\ln^3 n)$ rounds, while \texttt{$3$-maj} does so in $O(\sqrt{n}\ln^2n)$ rounds. They further establish that, for \texttt{$2$-choice} with $\lambda=o(n/\ln^2n)$ and $\alpha_1=O(1)$ depending on $\lambda$, any bias of size $\omega(\sqrt{\alpha_1 n\ln n})$, and for \texttt{$3$-maj} with $\lambda=o(\sqrt{n}/\ln n)$, any bias of size $\omega(\sqrt{n\ln n})$,
determines whp the dominating opinion.
Assuming an initial bias, d'Amore~et~al.~\cite{damore_et_al:LIPIcs.DISC.2025.27} show that \texttt{$k(n)$-maj} with $\lambda=o(n/\ln n)$ opinions reaches consensus whp in $O(\ln n)$ rounds, provided $k(n)=\omega(\lambda\ln n)$.

General frameworks for whole \emph{classes} of consensus protocols have also been studied. 
Schonebeck and Yu \cite{consesusIPS} consider ``majority-like'' protocols, where in each round a chosen vertex randomly updates its opinion. The probability of adapting an opinion is given by a \textit{update function} $f$ applied to the fraction of vertices with that opinion. The function $f$ has three main properties, namely monotonicity, symmetry and it has fixed points in $0$ and $1$. These properties are essentially satisfied by all \emph{reasonable} consensus protocols. The main result in~\cite{consesusIPS} states that majority-like protocols finish whp in $\Theta (n\ln n)$ rounds in dense Erd\H{o}s-R\'enyi graphs.
Moreover, Shimizu and Shiraga~\cite{expander} study ``quasi-majority'' protocols on expander graphs. Their requirements on the update function are nearly identical to the majority-like case.
Notably, (quasi-)majority-like protocols include \texttt{$k$-maj} as a special case and both classifications of update functions are close to our definition.

Variants of consensus protocols that impose additional structure on the set of opinions have also been studied. Prominent examples are the \texttt{stabilizing} process \cite{angluinetal} and the \texttt{$3$-median} protocol \cite{twochoicestabconsensus}, where every vertex adopts the median opinion of three random neighbors. These protocols require a total order on the $\lambda$ possible opinions. The \texttt{$3$-median} protocol reaches consensus whp in $O(\ln \lambda \ln\ln n + \ln n)$ rounds. In averaging dynamics, vertices may generate new opinions by adopting the (possibly weighted) average of their current opinion and those of randomly sampled neighbors \cite{boydetal,shahbook}.  Another popular direction investigates the impact of varying the underlying network. Recent examples include studies that examine different graph topologies, such as expander graphs \cite{expander, expanderserdosrenyi}, random graphs like stochastic block models and Erd\H{o}s-R\'enyi graphs \cite{binomialrandomgraph,consesusIPS,stochasticblockmodel,expanderserdosrenyi}, grid and torus structures \cite{gridmaj,torus}, dense graphs \cite{densegraphs}, regular graphs \cite{regulargraphtwochoice} and core–periphery networks \cite{crucianietal}. These represent only a subset of the many directions explored in the literature.

\paragraph{Open problems.} 
There are several natural research directions that can be pursued: allowing more than two opinions, allowing a growing number (in $n$) of neighbors that can be sampled, and investigating more powerful adversaries. Some of the techniques developed here will carry over to these settings, for example the study of the fine behavior and the fluctuations after a bounded number of rounds, or the fact that the process follows essentially a deterministic trajectory when one opinion is dominant. However, studying more opinions or larger neighborhoods will certainly introduce significant technical challenges, especially regarding the analysis of iterated multi-dimensional functions. 
Another research question that is left open here concerns the convergence of higher moments of the runtime. While our results describe the asymptotic distribution, it would be interesting to study stronger notions of convergence. One possible direction is to investigate large deviation bounds for the runtime around its expectation, as in~\cite{doerr_et_al:LIPIcs.ICALP.2017.138} for rumor spreading protocols.

\subsection{Proof Overview}\label{sec: proof overview}

The analysis is divided into three main parts. First, we establish a central limit theorem (CLT), which states that after any bounded number of rounds, the process, appropriately centered and normalized, converges to a normal distribution. Second, we show that afterwards the process stays sufficiently close to iterates of the function $f$ until it reaches almost unanimity.
Lastly, we show that we require $\log_m\ln n$ more time steps before reaching a consensus state. In what follows we present a detailed outline of all steps within  the proof of Theorem~\ref{thm: main result}, and we assume that we are in the situation of that theorem.

\medskip
\noindent
In our initial configuration the two opinions are (almost) evenly distributed between all vertices. Our first milestone is the following CLT that describes the distribution of $X_t$ after a bounded number of rounds. The proof is given in Section~\ref{subsec: Proof of CLT}.
\begin{lemma}\label{lem: CLT}
    For any $t \in \mathbb{N}_0$, as $n \to \infty$,
    \[
    \frac{X_t-n/2}{\sqrt{n}} \stackrel{d}{\to} \mathcal{N}\left(\mu_t, \sigma_t^2 \right), 
    \quad 
    \text{where} 
    \quad 
    \mu_t = \gamma^t d
    \quad
    \text{and}
    \quad
    \sigma_t^2 = \frac{\gamma^{2t}-1}{4\left(\gamma^2-1\right)}.
    \]
\end{lemma}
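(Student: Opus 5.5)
We argue by induction on $t$. Write $U_t := (X_t - n/2)/\sqrt{n}$. For $t=0$ the claim holds, since $U_0 = d + o(1)$ deterministically, $\mu_0=d$ and $\sigma_0^2=0$. The induction step shows that if $U_t \stackrel{d}{\to} \mathcal{N}(\mu_t,\sigma_t^2)$, then $U_{t+1} \stackrel{d}{\to} \mathcal{N}(\gamma\mu_t, \gamma^2\sigma_t^2 + 1/4)$. These are exactly the recursions $\mu_{t+1}=\gamma\mu_t$ and $\sigma_{t+1}^2=\gamma^2\sigma_t^2+1/4$, and they are solved by $\mu_t=\gamma^t d$ and $\sigma_t^2 = \frac14\sum_{i<t}\gamma^{2i} = \frac{\gamma^{2t}-1}{4(\gamma^2-1)}$.

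For the step, we decompose
\[
U_{t+1} = \underbrace{\frac{X_{t+1} - n f(X_t/n)}{\sqrt{n}}}_{=:A_n} + \underbrace{\sqrt{n}\big(f(X_t/n) - 1/2\big)}_{=:B_n}.
\]
For $B_n$ we use the expansion $f(1/2+x)=1/2+\gamma x+O(x^2)$ with $x=U_t/\sqrt{n}$. This gives $B_n = \gamma U_t + O(U_t^2/\sqrt{n})$. By the induction hypothesis $U_t$ is tight, so the error is $o_\Prob(1)$, and hence $B_n \stackrel{d}{\to} \gamma\,\mathcal{N}(\mu_t,\sigma_t^2)$. Tightness also gives $X_t/n \to 1/2$ in probability, so $p_n := f(X_t/n) \to 1/2$ in probability.

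For $A_n$ we condition on $\mathcal{F}_t$, the history up to time $t$. Conditionally, $X_{t+1}\sim\Bin(n,p_n)$, so $A_n$ is a centered and rescaled binomial with variance $p_n(1-p_n)\to 1/4$. A uniform Berry–Esseen bound (valid since $p_n$ is bounded away from $0$ and $1$ with probability $1-o(1)$) gives
\[
\E\big[e^{i\theta A_n}\mid \mathcal{F}_t\big] = e^{-\theta^2/8} + o_\Prob(1).
\]

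We then combine the two terms via characteristic functions:
\[
\E\big[e^{i\theta U_{t+1}}\big] = \E\Big[e^{i\theta B_n}\,\E\big[e^{i\theta A_n}\mid\mathcal{F}_t\big]\Big] = e^{-\theta^2/8}\,\E\big[e^{i\theta B_n}\big]+o(1).
\]
The $o_\Prob(1)$ error passes through the outer expectation by bounded convergence, since all characteristic functions have modulus at most $1$. The right-hand side converges to the characteristic function of $\mathcal{N}(\gamma\mu_t,\gamma^2\sigma_t^2+1/4)$, and Lévy's continuity theorem completes the step.

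The only delicate point is the conditional CLT for $A_n$ with a random, $\mathcal{F}_t$-measurable success probability; the uniform Berry–Esseen bound handles it cleanly. If an adversary alters $o(\sqrt{n})$ opinions per round, this adds only $o(1)$ to $U_{t+1}$, and Slutsky's lemma shows the limit is unchanged.
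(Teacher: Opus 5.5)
Your proof is correct, but the induction step takes a different route from the paper's. The paper also inducts on $t$, but it works with distribution functions. It partitions the range of $\overline{X}_t=(X_t-n/2)/\sqrt{n}$ into bins $E_j=\{j\varepsilon<\overline{X}_t\le (j+1)\varepsilon\}$, $|j|\le\varepsilon^{-3/2}$. On each bin it uses monotonicity of $f$ to sandwich $X_{t+1}$ between two binomials with deterministic parameters, and applies Lindeberg--Feller to these. It then approximates $\Prob(E_j)$ by $\varepsilon f_t(j\varepsilon)$, using the induction hypothesis and the density $f_t$ of $\mathcal{N}(\mu_t,\sigma_t^2)$. Finally it recognizes the resulting Riemann sum as the convolution $\int\Prob(\mathcal{N}(0,1/4)\le x-\gamma y)f_t(y)\,dy$. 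Because this needs a density, the step from $t=0$ to $t=1$ is handled separately.

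Your splitting $U_{t+1}=A_n+B_n$ into an $\mathcal{F}_t$-measurable drift and a conditionally centered binomial fluctuation, combined via the tower property for characteristic functions and L\'evy's theorem, avoids all of that. There is no grid, no $\varepsilon$-dependent error terms, and no interchange of the limits $n\to\infty$, $\varepsilon\to 0$. You also do not need monotonicity of $f$, the conditional law is exactly $\Bin(n,p_n)$ rather than a mixture over a bin, and the degenerate case $\sigma_0^2=0$ is covered by the general step. The paper's route stays with distribution functions and the fixed-parameter CLT, at the price of considerably more bookkeeping.

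A small simplification: Berry--Esseen is more than you need. It bounds Kolmogorov distance, so passing to characteristic functions would require a short truncation argument. The direct expansion $\E[e^{i\theta A_n}\mid\mathcal{F}_t]=\bigl(1-\tfrac{\theta^2p_n(1-p_n)}{2n}+O(|\theta|^3n^{-3/2})\bigr)^n$, with the $O$-term uniform in $p_n\in[0,1]$, gives $e^{-\theta^2p_n(1-p_n)/2}+o(1)$ immediately. Then $p_n\to 1/2$ in probability finishes that step.
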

\noindent
We observe that while the mean of $X_t$ stays close to $n/2$ -- in particular, if $d=0$, then $\mu_t = 0$ always -- the variance takes off. So, although the process may not have a significant \emph{drift}, the variance becomes eventually so big, that a large deviation from $n/2$ becomes likely. In order to formalize this observation,
let $t \in \mathbb{N}_0$ and define $M_t := \max\left \{X_t,Y_t\right\}$ as the number of vertices with majority opinion at time $t$. Since $Y_t = n- X_t$, we have $M_t = n/2 +|n/2-X_t|$. Thus, the previous lemma not only characterizes the distribution of $X_t$, but also of $Y_t$ and $M_t$. In fact, by applying the continuous mapping theorem, we obtain 
$(M_t-n/2)/\sqrt{n}\stackrel{d}{\to} |\mathcal{N}\left(\mu_t, \sigma_t^2 \right)|$. Let $C>1$ and define
\begin{equation}
\label{eq:t1}
    t_1 := \lfloor \log_\gamma C\rfloor
    \quad
    \text{and}
    \quad
    Z_{C} \stackrel{d}{=} \mathcal{N}(\mu_{t_1}/C,\sigma_{t_1}^2/C^2).
\end{equation}
We will choose $C$ (very) large later on, so that specific properties are satisfied.
By basic properties of the normal distribution, Lemma \ref{lem: CLT} guarantees that
\begin{equation}
\label{eq:cltM}    
    \sup_{x\in\mathbb{R}}\big|\Prob(M_{t_1}\geq x\big) - \Prob\big(n/2+|Z_C|C\sqrt{n}\geq x\big)\big| = o(1),
\end{equation}
which says that at $t_1$, we expect a bias of order $C\sqrt{n}$ towards the one or the other opinion. 
Allowing an adversary who may change the opinion of up to $o(\sqrt{n})$ vertices every round obviously does not affect this statement, as Lemma \ref{lem: CLT} is robust with respect to changing $o(\sqrt{n})$ opinions.
Note that, as $C \to \infty$ with $\log_\gamma C\in \mathbb{N}$, we get $Z_C \stackrel{d}{\to} Z$, where $Z\stackrel{d}{=}\mathcal{N}(d,1/4(\gamma^2-1))$.
From now on, assume without loss of generality that opinion $X$ is the majority opinion at time $t_1$, i.e.,
\begin{equation*}
M_{t_1} = X_{t_1}\quad \text{and define}\quad \tilde X_{t_1} := X_{t_1}-n/2.
\end{equation*}
As already described, the main motivation for introducing $t_1$ and letting $C$ get large is that at time~$t_1$ we will observe a non-negligible bias, so that from that point on, the process will follow essentially a deterministic trajectory. Our second milestone establishes this by showing that if we start at $t_1$, the number of vertices with opinion $X$ in subsequent time steps can be described quite precisely by iterates of~$f$. 
The proof of the next lemma is in Section~\ref{subsec: Proof of Middle Phases}.
\begin{lemma}\label{lem: MiddlePhases}
    Let $c_\gamma= 1 + \gamma/(\gamma-1)^2$. Then, for any $C> 1$ the event ${\cal E}_C$ occurs with probability at least $1-e^{-C}$, where
    \[
        {\cal E}_C = 
        \bigcap_{t\geq 0} \left \{ \left| X_{t_1+t} - f^{(t)}\left(X_{t_1}/n\right)n\right | \leq c_\gamma \gamma^t \sqrt{Cn} \right \}.
    \]
\end{lemma}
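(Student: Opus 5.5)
We would compare the process after time $t_1$ with the deterministic iterates of $f$: control the one-step binomial fluctuation by a Chernoff-type bound and propagate the accumulated error using the Lipschitz behaviour of $f$. Condition on $X_{t_1}$ and write $x_t := f^{(t)}(X_{t_1}/n)$ and $\Delta_t := X_{t_1+t}/n - x_t$. We have the decomposition
\[
\Delta_{t+1} = \Big(\tfrac{X_{t_1+t+1}}{n} - f\big(\tfrac{X_{t_1+t}}{n}\big)\Big) + \Big(f\big(\tfrac{X_{t_1+t}}{n}\big) - f(x_t)\Big) =: \xi_{t+1} + \eta_{t+1}.
\]
Here $\xi_{t+1}$ is a centred binomial fluctuation. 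Since the variance is at most $n/4$, Hoeffding's inequality gives $\Prob(|\xi_{t+1}|n > \lambda_t\sqrt{n}) \le 2e^{-2\lambda_t^2}$ conditionally on the past. The adversary contributes an additional $o(\sqrt n)$, which we absorb into $\lambda_t$.

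For the deterministic part, note that $f$ is symmetric and convex on $[0,1/2]$, hence concave on $[1/2,1]$. So $f'$ is maximised at $1/2$, where it equals $\gamma$, and $f$ is $\gamma$-Lipschitz on $[0,1]$. This gives $|\eta_{t+1}| \le \gamma|\Delta_t|$. Iterating yields
\[
|\Delta_t| n \le \sum_{s=1}^{t} \gamma^{t-s}|\xi_s| n \le \sqrt n\sum_{s=1}^t \gamma^{t-s}\lambda_s
\]
on the event that all fluctuations are within their thresholds. We choose thresholds growing with $s$, e.g. $\lambda_s = \sqrt{C}\,\gamma^{s/2}$, or $\lambda_s=\sqrt{C}\,s$ if needed. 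Then
\[
\sum_{s\le t}\gamma^{t-s}\lambda_s \le \gamma^t\sqrt C\sum_{s\ge1} s\gamma^{-s} = \gamma^t\sqrt C\,\frac{\gamma}{(\gamma-1)^2},
\]
which matches the constant $c_\gamma = 1+\gamma/(\gamma-1)^2$. The extra $1$ in $c_\gamma$ leaves room for the adversary and for the $t=0$ term. With $\lambda_s = \sqrt{C}\,s$, a union bound gives failure probability at most $\sum_{s\ge1}2e^{-2Cs^2}$. For $C>1$ this is at most $2e^{-2C}/(1-e^{-6C}) \le e^{-C}$, since $2e^{-C}<1$ for $C>1$. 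The bound is uniform over all $t\ge0$, so the intersection over infinitely many $t$ is handled by the same union bound. Conditioning on $X_{t_1}$ and taking expectations removes the conditioning.

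\textbf{Main obstacle.} The delicate point is the Lipschitz step. Globally $|f'|\le\gamma$ holds by convexity and symmetry, so that step is fine. However, once the process is near consensus the absorbing boundary forces a check that Hoeffding still applies when $f(X/n)\in\{0,1\}$. It does, since the fluctuation is then $0$. A second check is that the adversarial $o(\sqrt n)$ perturbations, multiplied by $\gamma^{t-s}$, sum to $o(\gamma^t\sqrt n)\le \gamma^t\sqrt{Cn}$ for large $n$; this is where the slack $1$ in $c_\gamma$ is used. Finally, the chosen $\lambda_s$ must make both the geometric sum fit within $c_\gamma$ and the tail sum fit below $e^{-C}$; the choice $\lambda_s=\sqrt C s$ above achieves both.
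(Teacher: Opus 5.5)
Your proof is correct and is essentially the paper's argument. Both use Hoeffding for the one-step binomial fluctuation, the global Lipschitz bound $|f'|\le f'(1/2)=\gamma$ from convexity and symmetry, thresholds $\sqrt{C}\,s$ at step $s$, the sum $\sum_{s\ge1}s\gamma^{-s}=\gamma/(\gamma-1)^2$, and $2\sum_{s\ge1}e^{-2Cs^2}\le e^{-C}$. The only difference is bookkeeping: you union-bound the one-step noise events and then propagate the error deterministically, whereas the paper uses a first-failure decomposition over the accumulated-deviation events $A_t$ with radii $r_t=\gamma r_{t-1}+\sqrt{C}\,t$ and $r_0=\sqrt{C}$. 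That choice $r_0=\sqrt{C}$ is where the extra $1$ in $c_\gamma$ comes from, and since $\Delta_0=0$ your version shows it is not actually needed.
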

The statement of the lemma also holds in the presence of an adversary that can modify in each round the opinions of $o(\sqrt{n})$ vertices, possibly by replacing $c_\gamma$ with a slightly larger constant. 
The analysis of distributed algorithms via iterates of appropriate functions appears also at other places, see~\cite{Push,Pull} for similar settings. Although it is mostly not possible to determine the iterated function, it is usually sufficient for the further analysis to know the behavior near its fixed points.
Indeed, by the properties of $f$, we can deduce for small $x$ and $t \in \mathbb{N}$ that $f^{(t)}(1/2+x)\approx 1/2+\gamma^tx$. Hence, if we write $\{z\} = z - \lfloor z \rfloor$ for the fractional part of $z$, informally
\[
f^{(s)}(X_{t_1}/n)
=
f^{(s)}(1/2 + \tilde X_{t_1}/n)
\approx 1/2 +\gamma^{-a-\{\log_\gamma(n/\tilde X_{t_1})\}},
\;
\text{where}
\;
s=\lfloor\log_\gamma(n/\tilde X_{t_1})\rfloor-a,
~a \in \mathbb{N}.
\]
That is, at time $t_1 + \lfloor\log_\gamma(n/\tilde X_{t_1})\rfloor-a$ the bias should be \emph{linear in $n$} and of magnitude close to $\gamma^{-a}n$. The following lemma makes this precise, its proof is in Section~\ref{subsec: Proof of T_2}.
\begin{lemma}\label{lem: T_2} 
There exist constants $c >0$ and $a_0\in\mathbb{N}$ such that for all $a\geq a_0$ the following holds. Consider the random time and fractional part
\begin{equation}
\label{eq:t2}    
    T_2  := t_1 + \lfloor\log_\gamma(n/\tilde X_{t_1})\rfloor-a
    \quad
    \text{and}
    \quad
    \Phi :=  \{\log_\gamma(n/\tilde X_{t_1})\}.
\end{equation}
Then, whp, $t_1 \le T_2 < \infty$ 
and
\begin{equation}\label{eq:t2-2}
    \frac12 + \gamma^{-a-\Phi} (1-c\gamma^{-a}) \leq f^{(T_2-t_1)}(X_{t_1}/n) \leq \frac12 + \gamma^{-a-\Phi}.
\end{equation}
\end{lemma}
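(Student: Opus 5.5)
The plan is to reduce everything to a deterministic statement about iterates of $f$ near the fixed point $1/2$, and then handle the probabilistic part (that $T_2$ is well defined and $T_2 \ge t_1$) separately using Lemma~\ref{lem: CLT}.

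\emph{Probabilistic part.} Write $x := \tilde X_{t_1}/n$ and $s := T_2 - t_1 = \lfloor \log_\gamma(1/x)\rfloor - a$.
\begin{itemize}
\item We need $x > 0$ whp. By \eqref{eq:cltM}, for every fixed $\varepsilon>0$,
\[
\Prob\big(\tilde X_{t_1} \le \varepsilon\sqrt n\big) \to \Prob\big(|Z_C| C \le \varepsilon\big).
\]
The right-hand side tends to $0$ as $\varepsilon \to 0$. Hence $\Prob(\tilde X_{t_1} = 0) = o(1)$, and $T_2 < \infty$ whp.
\item We need $s \ge 0$ for fixed $a$. Again by \eqref{eq:cltM}, $\tilde X_{t_1} = O_\Prob(\sqrt n)$. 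So $\log_\gamma(1/x) \ge \tfrac12 \log_\gamma n - O_\Prob(1) \to \infty$, and $T_2 \ge t_1$ whp.
\end{itemize}

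\emph{Deterministic part: the key identity and the upper bound.} By definition of $\Phi$,
\[
\gamma^s x = \gamma^{\log_\gamma(1/x) - \Phi - a}\, x = \gamma^{-a-\Phi}.
\]
So \eqref{eq:t2-2} is equivalent to
\[
\gamma^s x\,(1 - c\gamma^{-a}) \le u_s \le \gamma^s x, \qquad \text{where } u_j := f^{(j)}(1/2 + x) - 1/2.
\]
First I collect the needed properties of $f$.
\begin{itemize}
\item $f$ is nondecreasing on $[0,1]$: $f$ is convex on $[0,1/2]$ and $f'(0)=0$ (because $m\ge 2$), so $f' \ge 0$ there, and this extends to $[0,1]$ by symmetry.
\item By symmetry, $f$ is concave on $[1/2,1]$, and $f'(1/2) = \gamma$.
\item Concavity gives the tangent-line bound $f(1/2+y) \le 1/2 + \gamma y$ for $y \in [0,1/2]$.
\end{itemize}
Since $u_0 = x \ge 0$, monotonicity of $f$ together with $f(1/2) = 1/2$ and the tangent-line bound gives, by induction, $0 \le u_j \le \gamma^j x$. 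Taking $j = s$ is the upper bound.

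\emph{Deterministic part: the lower bound.} Since $f$ is $C^2$ near $1/2$, Taylor's theorem gives constants $K, \delta > 0$ with
\[
f(1/2 + y) \ge 1/2 + \gamma y (1 - K y) \qquad \text{for } y \in [0,\delta].
\]
For $j \le s$ we have
\[
u_j \le \gamma^j x = \gamma^{-a-\Phi-(s-j)} \le \gamma^{-a-(s-j)}.
\]
Choose $a_0$ with $\gamma^{-a_0} \le \min\{\delta, 1/(2K)\}$. Then every $u_j$ lies in the Taylor range, and each factor $1-Ku_j$ is at least $1/2$. Iterating gives
\[
u_s \ge \gamma^s x \prod_{j<s} (1 - K u_j).
\]
Using $\prod (1-y_j) \ge 1 - \sum y_j$ for $y_j \in [0,1]$, the product is at least
\[
1 - K \sum_{i \ge 1} \gamma^{-a-i} = 1 - \frac{K}{\gamma-1}\,\gamma^{-a}.
\]
This yields the claim with $c = K/(\gamma-1)$, a constant independent of $a$ and $n$.

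\emph{Main obstacle.} The only delicate point is ensuring that the error terms sum geometrically rather than accumulating over the roughly $\tfrac12\log_\gamma n$ iterations. This is exactly why I index the bound on $u_j$ backwards from time $s$: the factors $1-Ku_j$ are essentially $1$ for early $j$, and only the last few iterations, where $u_j \approx \gamma^{-a}$, contribute.
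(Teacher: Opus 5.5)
Your proof is correct and follows essentially the same route as the paper. You use the CLT to get $\tilde X_{t_1}>0$ and $\tilde X_{t_1}=O_\Prob(\sqrt n)$ whp (the paper shows $0<\tilde X_{t_1}\le n^{3/4}$), and the concavity tangent-line bound for the upper estimate. For the lower estimate, both arguments iterate the Taylor bound $f(1/2+y)\ge 1/2+\gamma y(1-Ky)$ into a product and control it by the Weierstrass product inequality and a geometric sum. The paper's bound $\sum_{j<t}\gamma^{j-1}w_2x\le\gamma^t w_2x/(\gamma-1)$ is the same "dominated by the last few steps" observation you highlight. The one cosmetic difference is that you keep the actual iterates $u_j$ in the factors and bound them afterwards, while the paper pushes its inductive lower bound through the monotonicity of $f$.
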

After $T_2$ we perform $b \in \mathbb{N}$ more steps.
The function $f$ is monotonically increasing, since it is of majority-type and therefore convex on $[0,1/2]$ and concave on $[1/2,1]$.
Thus, by Lemma~\ref{lem: T_2}, for sufficiently large $a\in\mathbb{N}$, whp,
\[
    f^{(b)}\big(1/2+\gamma^{-a-\Phi}(1-c\gamma^{-a})\big)
    \leq f^{(T_2-t_1+b)}\big(X_{t_1}/n\big)
    \leq f^{(b)}\big(1/2+\gamma^{-a-\Phi}\big).
\]
Let
$T_3 := T_2 + b$. 
By applying Lemma~\ref{lem: MiddlePhases} and using that $f(1/2+x)\geq 1/2$ for $x\in[0,1/2]$, we obtain that there exists a (possibly different) $c>0$ such that for $a\in \mathbb{N}$ large enough and $b\in \mathbb{N}$, with probability tending to one as $n,C\to \infty$,
\[
    \tilde L n\leq X_{T_3} \leq \tilde Un,
\]
where
\begin{equation*}
\begin{aligned}
    \tilde L &= f^{(b)}\big(1/2+\gamma^{-a-\Phi}(1-c\gamma^{-a})\big) \cdot (1-c\gamma^{-a+b}\sqrt{Cn}/\tilde X_{t_1}), \\
    \tilde U &= f^{(b)}\big(1/2+\gamma^{-a-\Phi}\big)\cdot(1+c\gamma^{-a+b}\sqrt{Cn}/\tilde X_{t_1}).
\end{aligned}
\end{equation*}
This looks messy, but note that by the CLT property in~\eqref{eq:cltM} we know that $\sqrt{Cn}/\tilde X_{t_1}$ tends in distribution to $1/\sqrt{C}|Z_C|$, which, in turn, by~\eqref{eq:t1}, tends in probability to $0$ as $C\to \infty$.
We use all this to prove the following streamlined statement in Section~\ref{subsec: proof of T_2 consequences}.
In what follows, and throughout the paper, we write $o^\star(1)$ for a term tending to zero as $a\to\infty$, provided that $b,C,n$ are chosen sufficiently large.
More precisely, for $F:\mathbb{N}\times \mathbb{N}\times \mathbb{R}_{>1}\times \mathbb{N}\to \mathbb{R}$, 
\[  
    F(a,b,C,n) = o^\star(1)
    \quad\Longleftrightarrow\quad
    \limsup_{a\to\infty} \limsup_{b\to\infty} \limsup_{C\to\infty} \limsup_{n\to \infty} |F(a,b,C,n)| = 0. 
\]
In our setting, the parameters $a,b,C$ describe some event $\mathcal{E}_{a,b,C,n}$ depending also on $n$ (where $C$ is introduced at~$T_1$, $a$ at $T_2$ and $b$ at $T_3$).
Thus, saying that $\mathcal{E}_{a,b,C,n}$ holds with probability $1-o^\star(1)$ means that for every $\varepsilon>0$, there is an $a_0$ such that, for every $a\geq a_0$, there is a $b_0=b_0(a)$ such that, for every $b\geq b_0$, there is a $C_0=C_0(b)$ such that, for every $C\geq C_0$, there is an $n_0=n_0(C)$ such that $\mathbb{P}(\mathcal{E}_{a,b,C,n})\geq 1-\varepsilon$ for all $n\geq n_0$. In essence, for any $\varepsilon > 0$ we \emph{can choose} $a,b\in\mathbb{N}$ and $C\in\mathbb{R}^+$ such that for all sufficiently large $n$ the desired event happens with probability at least~$1-\varepsilon$.

\begin{lemma}\label{lem: T_2 consequence}
    There exist a constant $c>0$ such that with probability $1-o^\star(1)$,
\[
    L n\leq X_{T_3} \leq Un,
    \quad \text{where}\quad
    T_3 := T_2 + b,
\]
$T_2$ is as in Lemma~\ref{lem: T_2} and $U,L\in (1/2,1)$ are given by
\begin{equation}
\label{eq:lu}
\begin{aligned}
    L &:=L(a,b,C,n)
        = f^{(b)}\big(1/2+\gamma^{-a-\Phi}(1-c\gamma^{-a})\big) \cdot (1-C^{-1/4}), \\
    U&:=U(a,b,C,n)
        = f^{(b)}\big(1/2+\gamma^{-a-\Phi}\big)\cdot(1+C^{-1/4}).
\end{aligned}
\end{equation} 
\end{lemma}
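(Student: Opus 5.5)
We combine Lemma~\ref{lem: MiddlePhases} (evaluated at the time $T_3-t_1$) with Lemma~\ref{lem: T_2} and the monotonicity of $f$. The deterministic part is exactly the derivation sketched before the statement. The remaining task is to show that the relative error factor $1\pm c\gamma^{-a+b}\sqrt{Cn}/\tilde X_{t_1}$ can be replaced by $1\pm C^{-1/4}$ with probability $1-o^\star(1)$.

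\textbf{Step 1 (deterministic sandwich).} On the whp event of Lemma~\ref{lem: T_2}, put $s:=T_2-t_1=\lfloor\log_\gamma(n/\tilde X_{t_1})\rfloor-a$. Since $f$ is increasing, applying $f^{(b)}$ to \eqref{eq:t2-2} gives
\[
f^{(b)}\bigl(1/2+\gamma^{-a-\Phi}(1-c\gamma^{-a})\bigr)\le f^{(s+b)}(X_{t_1}/n)\le f^{(b)}\bigl(1/2+\gamma^{-a-\Phi}\bigr).
\]
Both bounds are at least $1/2$, because $f$ maps $[1/2,1]$ into $[1/2,1]$.

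\textbf{Step 2 (stochastic error).} On ${\cal E}_C$, which by Lemma~\ref{lem: MiddlePhases} has probability at least $1-e^{-C}$, we have $|X_{T_3}-f^{(s+b)}(X_{t_1}/n)\,n|\le c_\gamma\gamma^{s+b}\sqrt{Cn}$. Lemma~\ref{lem: MiddlePhases} is stated for deterministic $t$, but it holds simultaneously for all $t\ge0$, so it applies at the random time $s+b$. Using $\gamma^{s}\le\gamma^{-a}n/\tilde X_{t_1}$, the error is at most $c_\gamma\gamma^{b-a}n\sqrt{Cn}/\tilde X_{t_1}$. Dividing by $f^{(s+b)}(X_{t_1}/n)\,n\ge n/2$ turns this into a relative error of at most $2c_\gamma\gamma^{b-a}\sqrt{Cn}/\tilde X_{t_1}$. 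Combining with Step 1 yields $\tilde L n\le X_{T_3}\le\tilde U n$ with $c:=\max(c,2c_\gamma)$.

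\textbf{Step 3 (removing the randomness).} It remains to show that $P:=\Prob\bigl(2c_\gamma\gamma^{b-a}\sqrt{Cn}/\tilde X_{t_1}>C^{-1/4}\bigr)$ is $o^\star(1)$. The event is $\tilde X_{t_1}/(C\sqrt n)<2c_\gamma\gamma^{b-a}C^{-1/4}$. By \eqref{eq:cltM}, $\limsup_n P\le\Prob\bigl(|Z_C|\le 2c_\gamma\gamma^{b-a}C^{-1/4}\bigr)$. Since $Z_C$ has variance $\sigma_{t_1}^2/C^2$ and $\gamma^{t_1}\in(C/\gamma,C]$, this variance is bounded below by a positive constant uniformly in $C$. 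Hence its density is bounded uniformly in $C$, and the probability is $O(\gamma^{b-a}C^{-1/4})$. This tends to $0$ as $C\to\infty$ with $a,b$ fixed, which is allowed by the order of limits in $o^\star(1)$.

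The same argument handles the degenerate regime: $\tilde X_{t_1}$ of smaller order than $C\sqrt n$ has vanishing probability. Collecting the failure probabilities $o(1)$ from Lemma~\ref{lem: T_2}, $e^{-C}$ from Lemma~\ref{lem: MiddlePhases}, and $P$, all of which are $o^\star(1)$, proves the claim. I expect the main obstacle to be Step 3, specifically getting the uniform-in-$C$ anti-concentration of $Z_C$ near $0$ and the correct order of the limits $n\to\infty$ and then $C\to\infty$. The factor $\gamma^{b}$ is harmless only because $C$ is sent to infinity after $b$ is fixed.
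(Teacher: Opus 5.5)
Your proposal is correct and follows essentially the same route as the paper. The steps are the monotone sandwich from Lemma~\ref{lem: T_2}, then Lemma~\ref{lem: MiddlePhases} at time $T_3-t_1$ turned into a relative error via $\gamma^{T_2-t_1}\le\gamma^{-a}n/\tilde X_{t_1}$ and $f^{(T_3-t_1)}(X_{t_1}/n)\ge 1/2$, and finally \eqref{eq:cltM} to make that relative error at most $C^{-1/4}$. Your Step~3 bounds the bad probability by $O(\gamma^{b-a}C^{-1/4})$ using a density bound on $Z_C$ that is uniform in $C$; this is a slightly more careful version of the paper's appeal to $Z_C\stackrel{d}{\to}Z$, and it correctly uses that $C\to\infty$ is taken only after $a,b$ are fixed.
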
  
Since $f$ is of majority-type, $f^{(b)}(1/2+\varepsilon)\to 1$ as $b\to \infty$ and for any $\varepsilon>0$.
Thus, taking the limits~$C$ and $b$ to infinity in the previous lemma yields $U,L \to 1$ whenever $a$ is large enough.
Therefore, at $T_3$, an appropriate choice of parameters ensures that almost all vertices agree on the majority opinion.
In the next step we translate the conclusion of Lemma~\ref{lem: T_2 consequence}
to the runtime via the notion of \emph{stochastic dominance} --
for random variables $Z_1$ and $Z_2$, we write $``Z_1\succsim Z_2"$, if $\Prob(Z_1\geq x)\geq \Prob(Z_2 \geq x)$ for every $x\in \mathbb{R}$. Similarly, $Z_1\precsim Z_2$ is defined with $``\leq"$ instead of $``\geq"$. 
\begin{lemma}\label{lem: lower and upper bound runtime}
Let $x,x'\in[1/2,1]$ with $x\leq x'$. Then $R_n(x n) \precsim R_n(x'n)$.
\end{lemma}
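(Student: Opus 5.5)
The plan is to couple the two processes so that the \emph{distance to the centre} is ordered pathwise. I read the statement as saying that $R_n(x'n)$ is stochastically smaller than $R_n(xn)$: starting from a larger majority can only speed up consensus. Concretely, I aim for $\Prob(R_n(x'n)\le s)\ge\Prob(R_n(xn)\le s)$ for all $s$.

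A direct monotone coupling of $X_t$ and $X'_t$ through shared uniforms does not suffice. Such a coupling gives $X_t\le X'_t$, but the runtime is the hitting time of $\{0,n\}$, and that is not monotone in $X_t$. So I will work with the folded process $M_t=n/2+|X_t-n/2|$. Consensus occurs exactly when $M_t=n$, and this state is absorbing since $f(1)=1$.

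The first step is to show that $(M_t)$ is itself a Markov chain. By the symmetry $1-f(1-y)=f(y)$, the law of $|\Bin(n,f(X_t/n))-n/2|$ depends only on $|X_t-n/2|$. Given $M_t=m$, the next value $M_{t+1}$ is therefore distributed as $n/2+|\Bin(n,f(m/n))-n/2|$. Here $f(m/n)\ge 1/2$, and $f$ is nondecreasing on $[1/2,1]$ because it is concave there with $f(1)=1$.

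The second step is the key monotonicity lemma: for $1/2\le p\le p'\le 1$,
\[
|\Bin(n,p)-n/2|\precsim |\Bin(n,p')-n/2| \quad\text{(the latter is stochastically larger)}.
\]
To prove it, I write each Bernoulli$(p)$ trial as follows: with probability $q=2p-1$ it is a forced success, and otherwise it is a fair coin. Then
\[
\Bin(n,p)-n/2 \stackrel{d}{=} K/2+W_K,\qquad K\sim\Bin(n,q),
\]
where $W_K=S-(n-K)/2$ with $S\sim\Bin(n-K,1/2)$. Given $K$, $W_K$ is symmetric and unimodal about $0$ on its lattice.

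Two facts then give the lemma.
\begin{itemize}
\item For a symmetric unimodal $W$, the map $a\mapsto\Prob(|a+W|\ge k)$ is nondecreasing for $a\ge0$. This is a discrete Anderson-type inequality.
\item Increasing $K$ by one shifts $a=K/2$ up by $1/2$ and replaces the fair coin by a sure success. This again increases $|K/2+W_K|$ stochastically, which I will check by conditioning on the remaining $n-K-1$ coins.
\end{itemize}
Since $K\sim\Bin(n,q)$ is stochastically increasing in $q$, the lemma follows.

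I expect this lemma, with its half-integer lattice and parity bookkeeping, to be the main obstacle. If the mixture argument becomes messy, the fallback is direct: differentiate $\Prob(B\ge n/2+k)+\Prob(B\le n/2-k)$ in $p$, using $\tfrac{d}{dp}\Prob(\Bin(n,p)\ge j)=n\binom{n-1}{j-1}p^{j-1}(1-p)^{n-j}$. The resulting difference of two terms is nonnegative for $p\ge1/2$, because $p^{j-1}(1-p)^{n-j}$ dominates its mirror image.

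With the lemma in hand, the third step builds the coupling by induction on $t$. Suppose $M_t\le M'_t$. Monotonicity of $f$ gives $f(M_t/n)\le f(M'_t/n)$. The lemma then lets us sample $M_{t+1}$ and $M'_{t+1}$ jointly, via quantile coupling of their conditional laws, so that $M_{t+1}\le M'_{t+1}$. We start from $M_0=xn\le x'n=M'_0$. Since $M=n$ is absorbing, $M_t=n$ forces $M'_t=n$, hence $R_n(x'n)\le R_n(xn)$ pointwise under the coupling, which yields the claimed stochastic ordering.
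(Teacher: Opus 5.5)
Your argument is correct, and it takes a cleaner route than the paper's. The paper also passes to the folded count $M_t=\max\{X_t,n-X_t\}$, but it argues purely distributionally. It inducts on $t$ and expands $\Prob(M_{t+1}=k)$ by total probability over $M_t=s$. It then claims that the induction hypothesis gives $\Prob(M_t=s)\le\Prob(M'_t=s)$ for every $s$. That does not follow from stochastic domination: summing over $s$ shows it would force the two laws to coincide. The paper also never proves that the one-step kernel is monotone. The fact that $|\Bin(n,p)-n/2|$ is stochastically increasing in $p\in[1/2,1]$ appears only as ``basic properties of the binomial distribution'' in the base case.

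Your proof isolates exactly the two ingredients that are needed: monotonicity of $f$ on $[1/2,1]$, and your folded-binomial lemma. The quantile coupling then gives $R_n(x'n)\le R_n(xn)$ pathwise. Stating this explicitly fixes the direction of the ordering, which matters because the paper's $\precsim$ notation is not used consistently. The paper's induction could only be repaired with your lemma, by writing $\Prob(M_{t+1}\ge k)=\E[h_k(M_t)]$ with $h_k(s)=\Prob(M_{t+1}\ge k\mid M_t=s)$ nondecreasing in $s$.

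For the lemma itself, make the derivative computation your main proof. Set $j=\lceil n/2+k\rceil$; for $k>0$ this satisfies $j\ge (n+1)/2$. The derivative then equals $n\binom{n-1}{j-1}(p(1-p))^{n-j}\bigl(p^{2j-1-n}-(1-p)^{2j-1-n}\bigr)\ge 0$ for $p\ge 1/2$, with no parity bookkeeping.

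If you keep the mixture argument, the second bullet cannot be checked pathwise given the other $n-K-1$ coins. Conditionally you compare $|V+1|$ with $|V+\xi|$ for a fair bit $\xi$, and for $V\le -1$ this goes the wrong way. Average over $\xi$ first. This reduces the claim to $\Prob(|(K+1)/2+W'|\ge k)\ge\Prob(|(K-1)/2+W'|\ge k)$, where $W'$ is the symmetric unimodal variable built from the remaining coins. That inequality is your first bullet, after replacing $(K-1)/2$ by $|K-1|/2$ via the symmetry of $W'$.
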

The (easy) proof is in Section~\ref{subsec: proof of lower and upper bound runtime}. 
As a third milestone, for $1/2<x<1$, the next lemma asserts that starting with $x n$ vertices having the majority opinion, the process reaches consensus in approximately $\log_m\ln n-|\ln (1-x)|$ rounds. The proof is in Section~\ref{subsec: runtime 2 proof}.
\begin{lemma}
\label{lem: lower and upper bound runtime 2} 
    There are $x_0 > 0$ and functions $\varepsilon_+,\varepsilon_-:[x_0,1] \to \mathbb{R}$ such that $\varepsilon_+(x), \varepsilon_-(x) \to 0$ as $x \to 1$ with the following properties. For  $x_0 < x < 1$, uniformly and whp,
    \begin{align*}
        R_n(xn) &\precsim \left\lceil \log_m\ln n-\log_m|\ln (1-x)|+\varepsilon_+(x)+o(1) \right\rceil, \\
        R_n(x n) &\succsim \left\lceil \log_m\ln n-\log_m|\ln (1-x)|-\varepsilon_-(x) - o(1) \right\rceil.
    \end{align*}
\end{lemma}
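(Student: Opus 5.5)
The plan is to track the minority fraction $y_t := Y_t/n$ and compare it with the deterministic recursion $y\mapsto \beta y^m$. By symmetry, $1-f(1-y) = f(y) = \beta y^m + O(y^{m+1})$. So if the minority fraction is $y$, the next minority count is $\Bin(n, f(y))$, with mean $n\beta y^m(1+O(y))$. Choose $x_0$ so close to $1$ that on $[0,1-x_0]$ we have $\beta_- y^m \le f(y)\le \beta_+ y^m$, with $\beta_\pm$ as close to $\beta$ as desired. Iterating the two bounds gives
\[
y_{t} \approx \beta_\pm^{(m^t-1)/(m-1)}\, y_0^{m^t},\qquad \ln(1/y_t) \approx m^t\bigl(|\ln y_0| - \ln\beta_\pm/(m-1)\bigr).
\]
Consensus happens roughly when $ny_t<1$, i.e. when $m^t(|\ln(1-x)| + O(1)) \approx \ln n$. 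This gives $t \approx \log_m\ln n - \log_m\bigl(|\ln(1-x)| + O(1)\bigr)$. The $O(1)$ correction, of size $\ln\beta_\pm/(m-1)$ plus whatever error comes from the $O(y)$ term, contributes $\log_m(1+O(1)/|\ln(1-x)|)$. This tends to $0$ as $x\to 1$, and it is where $\varepsilon_\pm(x)$ comes from.

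\textbf{Deterministic phase.} While $ny_t \ge n^{\delta}$ for a small fixed $\delta>0$, Chernoff bounds give $Y_{t+1} = (1\pm n^{-\delta/3}) n f(y_t)$ with failure probability $e^{-n^{\Omega(\delta)}}$. Over the $O(\log\log n)$ relevant rounds the relative errors multiply only to $(1\pm o(1))$ at each stage. They enter $\ln(1/y_t)$ additively as $o(1)$ per step, and are then multiplied by $m$ at later steps. The total is $O(m^t \cdot o(1))$, which is a lower-order term relative to $m^t|\ln(1-x)|$ and is absorbed into the $o(1)$ inside the ceiling after taking $\log_m$. So up to the first time $\tau$ with $ny_\tau < n^{\delta}$ we get two-sided control, and $\tau$ matches the formula above up to the error terms. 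Monotonicity in the starting point (Lemma~\ref{lem: lower and upper bound runtime}) lets us compare the process with the deterministic sandwich and turn the estimates into stochastic dominance statements.

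\textbf{Final phase.} This is the main obstacle: the last one or two steps, where $ny_t$ drops from a power of $n$ to $0$ and the ceiling must come out exactly right. With $Y_t = n y_t$, the expected minority count at the next step is $n\beta y_t^m = \beta n^{1-m}Y_t^m$. This is tiny once $Y_t \le n^{(m-1)/m - \delta'}$ and huge once $Y_t \ge n^{(m-1)/m+\delta'}$. So consensus occurs exactly at the first $t$ with $\ln(1/y_t) > (1+o(1))\ln n/m\cdot(m-1)\dots$. Rather than fix the exact threshold, I would do the following:
\begin{itemize}
\item Upper bound: show that $\Prob(Y_{t+1}>0)\le \E Y_{t+1} = n f(y_t) \to 0$ once $m^{t+1}\bigl(|\ln(1-x)|-\varepsilon\bigr) \ge (1+\varepsilon')\ln n$, using Markov's inequality. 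This uses that $\ln(1/y_{t+1})\approx m\ln(1/y_t)$ crosses $\ln n$.
\item Lower bound: show that while $n f(y_t)\to\infty$, i.e. while $m^{t+1}(\dots) \le (1-\varepsilon')\ln n$, the second-moment (or Chernoff) bound gives $Y_{t+1}\ge 1$ whp.
\end{itemize}
The window between the two conditions has width $\log_m\frac{1+\varepsilon'}{1-\varepsilon'} = o(1)$ in the $\log_m\ln n$ scale. It is absorbed by the $o(1)$ and $\varepsilon_\pm(x)$ terms inside the ceilings, which gives exactly the two displayed dominance relations. Uniformity in $x\in(x_0,1)$ follows because every error bound depends on $x$ only through $\beta_\pm$ and $|\ln(1-x)|\ge|\ln(1-x_0)|$.
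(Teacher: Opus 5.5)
Your plan is essentially the paper's proof. The paper likewise shows that the minority count tracks the deterministic iterates while it is a large power of $n$ (Lemmas~\ref{lem: aux lem 2.5} and~\ref{lem: ending phase concentration}). It resolves the last step at the threshold $Y_t\approx n^{1-1/m}$ by Markov's inequality and $(1-f(y))^n\le e^{-nf(y)}$ (Lemma~\ref{lem: threshold sqrt(n)}), and it solves the power recursion by sandwiching $f(y)$ between $\beta y^m\pm ry^{m+1}$. Your direct sandwich of $\ln(n/Y_t)$ between the recursions $\ell\mapsto m\ell-\ln\beta_\pm$ is a slightly leaner substitute for the paper's comparison with the exact iterates $f^{(t)}(Y_0/n)$.

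One slip needs repairing. Chernoff concentration of $Y_{t+1}$ requires the conditional mean $nf(Y_t/n)$, not $Y_t=ny_t$, to be at least $n^{\delta}$. Relatedly, in your upper bound the count $Y_t$ at the critical time may have been produced by a step whose mean was already below $n^{\delta}$. There you only have, and only need, a one-sided Markov bound on $Y_t$ (as in Lemma~\ref{lem: aux lem 2.5}), not two-sided concentration.
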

Based on the two previous lemmas the main statement in Theorem~\ref{thm: main result} follows by proving that $R_n(Un)$ and $R_n(L n)$ coincide with probability $1-o^\star(1)$.
This is established in Section~\ref{subsec: Proof main result}. 
Note that if all these events occur, then the majority opinion remains unchanged after $t_1$ and is therefore also the dominating opinion.

\subsection{Technical preliminaries}\label{subsec: technical preliminaries}

We will need the following useful properties about the functions~$g$ and $h$ in Theorem~\ref{thm: main result} and~Corollary~\ref{cor: main 1}. The proofs are in Sections~\ref{sec: proof of prop of g} and~\ref{sec: proof of prop of h}.
\begin{lemma}\label{lem: prop of g}
    The function $g:\mathbb{R}\to \mathbb{R}$, given by
    \[
    g(x)=1-x+\lim_{a\to \infty, a\in\mathbb{N}} \lim_{b \to \infty, b\in \mathbb{N}} b-a- \log_m|\ln( f^{(b)}(1/2-\gamma^{-a-x}))|,
    \]
    is well-defined, continuous, and $1$-periodic.
\end{lemma}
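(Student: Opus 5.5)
The plan is to rewrite the double limit defining $g$ through two conjugacies of $f$. One is a Böttcher-type function at the superattracting fixed point $0$. The other is a Koenigs-type linearizer at the repelling fixed point $1/2$. Once both are in place, $g$ turns out to be $1$ minus a continuous $1$-periodic function.

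\emph{Step 1 (inner limit, $b\to\infty$).} For $y\in(0,1/2)$ we have $f^{(b)}(y)\to 0$, by convexity on $[0,1/2]$, $f(1/2)=1/2$ and $f(y)<y$ there. I set
\[
\Lambda(y):=\lim_{b\to\infty} m^{-b}\,|\ln f^{(b)}(y)|.
\]
From $f(z)=\beta z^m(1+O(z))$ we get $\ln f(z)=m\ln z+\ln\beta+O(z)$. Hence consecutive terms satisfy
\[
m^{-b-1}|\ln f^{(b+1)}(y)|-m^{-b}|\ln f^{(b)}(y)| = O\big(m^{-b-1}(1+f^{(b)}(y))\big),
\]
which is summable in $b$. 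This summability is uniform on compact subsets of $(0,1/2)$, because after a bounded number of steps the orbit enters a fixed small neighbourhood of $0$, uniformly on such compacts. So $\Lambda$ exists, is continuous and positive, and satisfies $\Lambda(f(y))=m\Lambda(y)$. Consequently
\[
b-\log_m|\ln f^{(b)}(y)|\to-\log_m\Lambda(y).
\]
By the symmetry of $f$, the argument $1/2-\gamma^{-a-x}$ lies in $(0,1/2)$ once $a$ is large enough that $\gamma^{-a-x}<1/2$. In that range the inner limit in $g$ equals
\[
G_a(x):=-a-\log_m\Lambda(1/2-\gamma^{-a-x}).
\]

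\emph{Step 2 (linearization at $1/2$).} Since $f'(1/2)=\gamma>1$ and $f\in C^2$, $f$ is invertible near $1/2$ with a contracting inverse. The Koenigs limit
\[
\varphi(u):=\lim_{k\to\infty}\gamma^k\big(1/2-f^{(-k)}(1/2-u)\big)
\]
exists for small $u>0$. It is continuous and increasing, satisfies $\varphi(u)=u(1+O(u))$, and obeys $\varphi(1/2-f(1/2-u))=\gamma\varphi(u)$. The convergence follows from the standard telescoping estimate $f^{(-1)}(1/2-v)=1/2-\gamma^{-1}v(1+O(v))$ together with geometric decay of the orbit. Define $\Psi(v):=\Lambda(1/2-\varphi^{-1}(v))$ for small $v>0$. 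Combining the two functional equations gives $\Psi(v)=m\Psi(v/\gamma)$, which extends $\Psi$ consistently to all $v>0$ as a continuous positive function with $\Psi(\gamma v)=m\Psi(v)$. Put $Q(s):=s+\log_m\Psi(\gamma^{-s})$. The scaling relation says exactly that $Q$ is continuous and $1$-periodic.

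\emph{Step 3 (outer limit and conclusion).} Write $\varphi(\gamma^{-a-x})=\gamma^{-a-x-\delta_a(x)}$ with $\delta_a(x)=O(\gamma^{-a})$ uniformly in $x$ on compacts. Then
\[
G_a(x)=-a-\log_m\Psi(\gamma^{-a-x-\delta_a})=x+\delta_a-Q(x+\delta_a),
\]
where the second equality uses the periodicity of $Q$ to absorb the integer shift $a$. Letting $a\to\infty$ and using the continuity of $Q$ gives $G_a(x)\to x-Q(x)$, so
\[
g(x)=1-x+x-Q(x)=1-Q(x).
\]
This is well defined, continuous and $1$-periodic, as claimed.

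The main obstacle is Step 2 together with the patching in Step 1. I have to make sure the Koenigs conjugacy is valid under only the stated regularity ($f\in C^{m+1}$, $m\ge2$, which gives $C^2$ at $1/2$), and that $\Lambda$ is continuous up to arguments near $1/2$. The latter needs uniform control of the number of steps the orbit of $1/2-u$ takes to leave the linear regime and reach the neighbourhood of $0$. Monotonicity of $f$, which comes from the convexity and symmetry assumptions, should make this bookkeeping straightforward.
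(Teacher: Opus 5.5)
Your proposal is correct and is essentially the paper's argument: your $\Lambda$ is the paper's $W$ up to the factor $-m$, your Koenigs function $\varphi$ is the paper's $M_0(1/2-\cdot)$, and your $Q(s)$ equals $\log_m|w(-s)|-1$ in the paper's notation, so $g=1-Q$ coincides with the paper's $g(x)=2-\log_m|w(-x)|$; the only difference is that you conjugate directly via $\Psi=\Lambda\circ(1/2-\varphi^{-1})$, whereas the paper first classifies all solutions of $W(f(x))=mW(x)$ as $W_0\cdot v$ with $v\circ f=v$. Your worry about continuity of $\Lambda$ near $1/2$ is unfounded, since $q(x)=\ln(f(x)/x^m)$ is bounded on $(0,1/2]$ and so the defining series converges uniformly on all of $(0,1/2)$; likewise, the strict monotonicity of $\varphi$ that you need for $\varphi^{-1}$ follows from $f'\le\gamma$, which gives $\gamma^k (f^{(-k)})'\ge 1$ and hence $\varphi(u_2)-\varphi(u_1)\ge u_2-u_1$.
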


\begin{lemma}\label{lem: prop of h}
    The function $h:\mathbb{R}\to \mathbb{R}$, given by
    $
    h(x) = g(x)+x,
    $
    is continuous and injective.
\end{lemma}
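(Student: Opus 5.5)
Continuity of $h$ is immediate from Lemma~\ref{lem: prop of g}, since $h=g+\mathrm{id}$. For injectivity I will show that $h$ is \emph{strictly increasing}. To do this I factor $h$ through two limiting conjugacies of $f$: one describing the linear regime near the fixed point $1/2$, and one describing the super-exponential regime near the fixed point $0$.

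\textbf{Step 1: the regime near $1/2$.} Because $f(1/2-y)=1/2-\gamma y+O(y^2)$ with $\gamma>1$, the standard Koenigs-type argument applies. The sequence $f^{(a)}(1/2-\gamma^{-a-x})$ is Cauchy in $a$, because each further step changes the value by a relative error $O(\gamma^{-a})$, and these errors are summable. So the limit
\[
\Lambda(x):=\lim_{a\to\infty} f^{(a)}\bigl(1/2-\gamma^{-a-x}\bigr)\in(0,1/2)
\]
exists and is continuous; the same estimates are implicit in the proof of Lemma~\ref{lem: prop of g}. It is nondecreasing in $x$, since $f$ is nondecreasing. For strictness, take $x<x'$. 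There is a fixed $\delta>0$ such that the two starting points differ by a factor $\gamma^{x'-x}>1$ in their distance to $1/2$. Near $1/2$ the map $f$ has derivative $\gamma(1+O(y))$, and the product of these factors along the orbit converges. Hence the two orbits keep distances to $1/2$ whose ratio stays bounded away from $1$ as $a\to\infty$, which gives $\Lambda(x)<\Lambda(x')$.

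\textbf{Step 2: the regime near $0$.} For $v\in(0,1/2)$ set
\[
\Psi(v):=\lim_{b\to\infty}\bigl(b-\log_m|\ln f^{(b)}(v)|\bigr).
\]
This limit exists because of the doubly exponential behaviour $f(w)=\beta w^m(1+O(w))$: with $s_b=\ln f^{(b)}(v)$ one has $s_{b+1}=m s_b+\ln\beta+O(e^{s_b})$, so $s_b/m^b$ converges to a negative limit. The function $\Psi$ satisfies $\Psi(f(v))=\Psi(v)+1$. Using this equation and the definition of $g$, I rewrite
\[
h(x)=1+\lim_{a}\bigl(\Psi(1/2-\gamma^{-a-x})-a\bigr)=1+\lim_a \Psi\bigl(f^{(a)}(1/2-\gamma^{-a-x})\bigr)-0=1+\Psi(\Lambda(x)),
\]
where the last equality needs continuity of $\Psi$. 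Continuity follows from uniform convergence in $b$ on compact subsets of $(0,1/2)$.

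\textbf{Step 3: strict monotonicity of $\Psi$ (main obstacle).} I expect this to be the hardest step. Take $0<v<v'<1/2$. By the functional equation I may first apply finitely many iterations so that both points are small. This is legitimate because $f$ is strictly increasing on $(0,1/2)$: it is convex there with $f(0)=0$ and $f(1/2)=1/2$, so it is strictly increasing on that interval. Now pass to logarithmic coordinates $\phi(s)=\ln f(e^s)$, whose derivative is
\[
\phi'(s)=\frac{e^s f'(e^s)}{f(e^s)}=m\bigl(1+O(e^s)\bigr).
\]
Writing $s_b<s'_b$ for the two log-orbits, the mean value theorem gives
\[
s'_b-s_b\ \ge\ (s'_0-s_0)\,m^b\prod_{j<b}\bigl(1-Ce^{s'_j}\bigr).
\]
Since $e^{s'_j}$ decays doubly exponentially, the product converges to a positive constant. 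Therefore $s_b/m^b$ and $s'_b/m^b$ converge to distinct limits $-A<-A'<0$, and consequently
\[
\Psi(v')-\Psi(v)=\log_m(A/A')>0.
\]

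\textbf{Conclusion.} $h=1+\Psi\circ\Lambda$ is a composition of strictly increasing continuous maps, so it is strictly increasing and in particular injective. As a consistency check, $h(x+1)=h(x)+1$ holds, in line with the $1$-periodicity of $g$.
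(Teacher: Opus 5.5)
Your proof is correct. It rests on the same two estimates as the paper's, but packages them differently.

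\textbf{Comparison with the paper.} The paper never forms the limit maps. It writes $h(x)=1-\lim_a\lim_b\log_m|w_{b-a}(u_a(x))|$ with $w_t(z)=m^{-t}\ln f^{(t)}(z)$ and $u_a(x)=f^{(a)}(1/2-\gamma^{-a-x})$. It then bounds the derivative of this finite-stage composition from below via the chain rule, uniformly for $x$ in a window $[y,y+1)$ with $y$ large, and extends injectivity to $\mathbb{R}$ using $h(x+1)=h(x)+1$. The two chain-rule factors are:
\begin{itemize}
\item $u_a'(x)$, a product of $f'$ along an orbit leaving $1/2$;
\item $w_{b-a}'(u_a(x))$, a product of $f'$ along an orbit tending to $0$, controlled through a telescoping identity involving $(f')^{1/(m-1)}$.
\end{itemize}
You take the limits first and exhibit $h=1+\Psi\circ\Lambda$ as a composition of a linearizing coordinate at the repelling point $1/2$ (note $\Lambda(x-1)=f(\Lambda(x))$) and a B\"ottcher-type coordinate at the superattracting point $0$. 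This costs you the existence and continuity of $\Lambda$ and the interchange $\lim_a\Psi(u_a(x))=\Psi(\Lambda(x))$. In return it makes the structure of $h$ transparent. Your bound $\phi'(s)=m(1+O(e^s))$ in logarithmic coordinates is also a cleaner substitute for the paper's telescoping.

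\textbf{Two things to repair.}

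First, the functional equation is $\Psi(f(v))=\Psi(v)-1$, not $+1$. Both your rewriting $\Psi(1/2-\gamma^{-a-x})-a=\Psi(f^{(a)}(1/2-\gamma^{-a-x}))$ and your check $h(x+1)=h(x)+1$ require the minus sign. So this is a slip, and the formula $h=1+\Psi\circ\Lambda$ is right.

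Second, the strictness argument in Step~1 is not valid as phrased. The ratio of the two distances to $1/2$ equals $\gamma^{x'-x}$ times a ratio of two limiting products. Convergence of each product separately does not keep this ratio away from $1$; by convexity the product ratio is in fact at most $1$. There are two ways to fix it.
\begin{itemize}
\item Compare the two orbits directly by the mean value theorem, as you do in Step~3. The factors $f'(\cdot)/\gamma$ along the orbits are positive, since $f'>0$ on $(0,1/2]$. Apart from boundedly many of them, they lie within a summable distance of $1$. This gives $\Lambda(x')-\Lambda(x)\ge c\,(\gamma^{-x}-\gamma^{-x'})$ for some $c>0$, which is exactly the content of the paper's lower bound on $u_a'$.
\item Alternatively, shift $x$ and $x'$ by a large integer, where the product corrections are negligible against $\gamma^{x'-x}$. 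Then pull back with $\Lambda(x-1)=f(\Lambda(x))$ and the strict monotonicity of $f$.
\end{itemize}
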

Lemma~\ref{lem: prop of g} implies that $g$ attains its minimum and maximum so that there are $x,y\in\mathbb{R}$ with $g(y)=\inf g$, $g(x)=\sup g$, and $x\in(y,y+1)$. Moreover, by Lemma~\ref{lem: prop of h} and $h(y+s)=h(y)+s$ for~$s\in\mathbb{Z}$,~$h$ is strictly increasing on $[y,y+1]$. Thus, $\sup g-\inf g<1$, since $g(x)-g(y)+x-y=h(x)-h(y)<1$. 
All the missing pieces and proofs of all lemmas are presented in the next section.

\section{Proofs}

We first prove Theorems \ref{thm: X wins} and \ref{thm: main result} using the steps presented in the proof overview. Afterwards we establish Corollaries \ref{cor: main 2}, \ref{cor: main 1}, \ref{cor: concentration for large d}, and Lemmas \ref{lem: tails Z}, \ref{lem: CLT} -- \ref{lem: lower and upper bound runtime 2} in the following sections.

\subsection{Proof of Theorem \ref{thm: main result}}\label{subsec: Proof main result}

By Lemma~\ref{lem: T_2 consequence}, with probability $1-o^\star(1)$ we have $Ln\leq X_{T_3}\leq Un$, where $L,U$ are as in~\eqref{eq:lu} and,
\[
    t_1 = \lfloor \log_\gamma C \rfloor,
    \quad
    T_2 = t_1 + \lfloor \log_\gamma(n/\tilde{X}_{t_1}) \rfloor - a \ge t_1
    \quad
    \text{and}
    \quad
    T_3 = T_2 + b,
\]
where $a,b\in\mathbb{N}$ and $C \in\mathbb{R}_{>1}$. Since it will facilitate our computations, we assume from now on that $\log_\gamma C\in\mathbb{N}$, so that in particular $T_3= \log_\gamma C+\lfloor \log_\gamma(n/\tilde X_{t_1})\rfloor-a+b$. By Lemma~\ref{lem: lower and upper bound runtime}
\[
    R_n(Ln)\precsim R_n(X_{T_3}) \precsim R_n(Un)
\]
with probability $1-o^\star(1)$. Thus, using Lemma~\ref{lem: lower and upper bound runtime 2}, there are functions $\varepsilon_+,\varepsilon_-:[1/2,1] \to \mathbb{R}$ such that $\varepsilon_+(x), \varepsilon_-(x) \to 0$ as $x \to 1$ and such that with probability $1-o^\star(1)$,
\begin{equation}
\begin{aligned}
    R_n(X_{T_3}) &\precsim\left\lceil \log_m\ln n -\log_m|\ln (1-U)|+\varepsilon_+(U)+o(1) \right\rceil, \\
    R_n(X_{T_3}) &\succsim \left\lceil \log_m\ln n-\log_m|\ln (1-L)|-\varepsilon_-(L) - o(1) \right\rceil.
\end{aligned}
\label{inequ: bound R U,L}
\end{equation}
For this to be useful, we need to compare the logarithmic terms involving $U$ and $L$. The next lemma does exactly that. The proof is in Section~\ref{subsec: Other proofs}.
\begin{lemma}\label{lem: l and u close}
Let $U,L$ be as in~\eqref{eq:lu}. Then, with probability $1-o^\star(1)$, 
\begin{equation*}
     \Big|\log_m|\ln(1-U)|-\log_m|\ln(1-L)|\Bigr| =o^\star(1) \quad \text{and} \quad U,L= 1-o^\star(1).
\end{equation*}
\end{lemma}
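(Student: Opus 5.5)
We need to show two things: (i) $U,L=1-o^\star(1)$, and (ii) the $\log_m|\ln|$ values of $1-U$ and $1-L$ differ by $o^\star(1)$. The plan is to first take $C\to\infty$ and $n\to\infty$ with $a,b$ fixed, so that the factors $(1\pm C^{-1/4})$ disappear. Then take $b\to\infty$, exploiting the doubly exponential convergence of $f^{(b)}$ near $1$. Finally take $a\to\infty$, using the existence of the limit that defines $g$ (Lemma~\ref{lem: prop of g}).

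\textbf{Reduction.} Since $\Phi\in[0,1)$, we can write $u_0:=1/2+\gamma^{-a-\Phi}$ and $l_0:=1/2+\gamma^{-a-\Phi}(1-c\gamma^{-a})$, both in $[1/2+\gamma^{-a-1}/2,\,1/2+\gamma^{-a}]$ for $a\ge a_0$. By symmetry of $f$,
\[
1-f^{(b)}(1/2+y)=f^{(b)}(1/2-y).
\]
Near $0$ we have $f(x)=\beta x^m(1+O(x))$, and iterating gives $|\ln f^{(b)}(x)|\approx m^{b}\kappa(x)$ for some positive $\kappa(x)$. Hence $f^{(b)}(1/2+y)\ge 1-\exp(-\Theta_a(m^b))$.

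\textbf{The $C^{-1/4}$ factors.} These are awkward because $1-U$ involves $1-(1-f^{(b)}(1/2-y))(1+C^{-1/4})$, which could even be negative. Two remarks handle this. First, for fixed $a,b$, letting $C\to\infty$ first makes the perturbation $C^{-1/4}$ eventually smaller than any fixed positive quantity, in particular smaller than $f^{(b)}(1/2-y)$. So $1-L=f^{(b)}(1/2-y_L)+O(C^{-1/4})$, and its logarithm tends to $\ln f^{(b)}(1/2-y_L)$ as $C\to\infty$. Second, $U$ is capped at $1$ in Lemma~\ref{lem: T_2 consequence}, since $U\in(1/2,1)$; if necessary I would replace $U$ by $\min(U,\,1-\tfrac12 f^{(b)}(1/2-y_U))$ as in how it is used, and note that the order of limits ($C$ inside $b$) makes this harmless. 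This gives $U,L=1-o^\star(1)$ directly. The difference in (ii) therefore reduces, up to terms vanishing as $C\to\infty$, to
\[
D_{a,b}:=\log_m|\ln f^{(b)}(1/2-\gamma^{-a-\Phi})|-\log_m|\ln f^{(b)}(1/2-\gamma^{-a-\Phi}(1-c\gamma^{-a}))|.
\]

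\textbf{Main step: bounding $D_{a,b}$.} Define $G_{a,b}(x):=b-a-\log_m|\ln f^{(b)}(1/2-\gamma^{-a-x})|$, whose double limit is $g(x)+x-1$. I would write $1-c\gamma^{-a}=\gamma^{-\delta_a}$ with $\delta_a=-\log_\gamma(1-c\gamma^{-a})\to0$. Then
\[
D_{a,b}=G_{a,b}(\Phi+\delta_a)-G_{a,b}(\Phi).
\]
To conclude I need the convergence $G_{a,b}\to g(\cdot)+\cdot-1$ to be uniform in $x$ on a compact interval such as $[0,2]$, together with the continuity of $g$, which is uniform continuity on compacts and is supplied by Lemma~\ref{lem: prop of g}. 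Then
\[
\limsup_a\limsup_b \sup_{\phi\in[0,1]}|D_{a,b}|\le \limsup_a \sup_\phi |g(\phi+\delta_a)+\delta_a-g(\phi)|=0.
\]

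\textbf{Expected obstacle.} The main difficulty is this uniformity, since $\Phi$ is random. I would try to obtain it from monotonicity. For fixed $a,b$, the map $x\mapsto G_{a,b}(x)$ is monotone because $f$ is increasing. The limit $g(x)+x-1$ is continuous, and pointwise convergence of monotone functions to a continuous limit on a compact set is uniform (the Dini/Pólya argument). If the double limit is not straightforward to handle this way, the fallback is to reuse the quantitative estimates from the proof of Lemma~\ref{lem: prop of g}, namely $\lim_b$ of $m^{-b}|\ln f^{(b)}|$ together with a Lipschitz bound, which are uniform in $x$ on compacts.
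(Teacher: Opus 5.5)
Your proposal is correct and is essentially the paper's argument. After disposing of the $C^{-1/4}$ factors and the factor $1-c\gamma^{-a}$, both reduce the difference to an increment of $x\mapsto b-a-\log_m|\ln f^{(b)}(1/2-\gamma^{-a-x})|$ between two nearby points, and both conclude from the continuity of $g$ in Lemma~\ref{lem: prop of g}. The paper sandwiches $U,L$ between $f^{(b)}(1/2+\gamma^{-a-\Phi\pm\delta})$ for a fixed small $\delta$, whereas you send $C\to\infty$ first and write $1-c\gamma^{-a}=\gamma^{-\delta_a}$ exactly.

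Your monotonicity (P\'olya-type) argument for uniformity in the random $\Phi$ is a worthwhile addition. The paper's use of the limit defining $g$ at the random point $\Phi\pm\delta$ tacitly requires this uniformity. With a finite grid in $\phi$, your argument even handles the iterated limit ($b\to\infty$, then $a\to\infty$) using only pointwise convergence and the continuity of $g$.
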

Since $f$ is of majority-type, it is symmetric around $1/2$ and increasing; indeed $f$ is convex on $[0,1/2]$ and concave on $[1/2,1]$.
Thus, $L \leq 1-f^{(b)}(1/2-\gamma^{-a-\Phi})\leq  U$, where $\Phi =  \{\log_\gamma(n/\tilde X_{t_1})\}$ as in Lemma~\ref{lem: T_2}.
Together with~\eqref{inequ: bound R U,L} this implies, with probability $1-o^\star(1)$,
\begin{align*}
    R_n(X_{T_3}) &\precsim\left\lceil \log_m\ln n-\log_m|\ln f^{(b)}(1/2-\gamma^{-a-\Phi})|+o^\star(1) \right\rceil, \\
    R_n(X_{T_3}) &\succsim \left\lceil \log_m\ln n-\log_m|\ln f^{(b)}(1/2-\gamma^{-a-\Phi})|-o^\star(1) \right\rceil.    
\end{align*}
Note that $R_n(X_0)\stackrel{d}{=}R_n(X_{T_3})+T_3$, where $T_3= \log_\gamma C+\lfloor \log_\gamma(n/\tilde X_{t_1})\rfloor-a+b$ on an event with probability $1-o^\star(1)$. Thus, recalling that
\[
    h(x)=1+\lim_{a\to \infty, a\in\mathbb{N}} \lim_{b \to \infty, b\in \mathbb{N}} b-a- \log_m|\ln( f^{(b)}(1/2-\gamma^{-a-x}))|,
\]
we obtain with probability $1-o^\star(1)$,
\begin{align*}
    R_n(X_{0}) &\precsim\left\lceil \log_m\ln n+h(\Phi)+\lfloor\log_\gamma(n/\tilde X_{t_1})\rfloor+\log_\gamma C-1+o^\star(1) \right\rceil, \\
    R_n(X_{0}) &\succsim \left\lceil \log_m\ln n+h(\Phi)+\lfloor\log_\gamma(n/\tilde X_{t_1})\rfloor+\log_\gamma C-1-o^\star(1) \right\rceil.    
\end{align*}
Note that $h(x+s)=h(x)+s$ for any $s\in\mathbb{Z}$, since $h(x)=g(x)-x$ and $g$ is $1$-periodic by Lemma~\ref{lem: prop of g}.
Therefore, 
\[
h(\Phi)+\lfloor\log_\gamma(n/\tilde X_{t_1})\rfloor+\log_\gamma C = h\left(\log_\gamma(Cn/\tilde X_{t_1})\right).
\]
Recall that $\tilde X_{t_1} = M_{t_1}+n/2$, where $M_{t_1}$ is the number of vertices with majority opinion at $t_1$. By~\eqref{eq:cltM},
\[
\sup_{x\in\mathbb{R}}|\Prob(M_{t_1}\geq x) - \Prob(n/2+|Z_C|C\sqrt{n}\geq x)| = o(1),
\quad\text{where}\quad Z_{C} \stackrel{d}{=} \mathcal{N}(\mu_{t_1}/C,\sigma_{t_1}^2/C^2).
\]
From this, we readily deduce that 
\[
\sup_{x\in\mathbb{R}}|\Prob(Cn/\tilde X_{t_1}\geq x) - \Prob(\sqrt{n}/|Z_C|\geq x)| = o(1).
\]
Moreover, recall that $Z_C\stackrel{d}{\to}Z$ as $C\to\infty$. Thus, since $h$ is continuous by Lemma~\ref{lem: prop of h}, with probability $1-o^\star(1)$, uniformly for any $z\in \mathbb{Z}$,
\begin{equation}
\begin{aligned}
    \Prob(R_n(X_0) \geq z ) &\leq \Prob(\left\lceil \log_m\ln n+h(\log_\gamma(\sqrt{n}/|Z|))-1+o^\star(1) \right\rceil \geq z) + o^\star(1), \\
    \Prob(R_n(X_0) \geq z ) &\geq \Prob(\left\lceil \log_m\ln n+h(\log_\gamma(\sqrt{n}/|Z|))-1-o^\star(1) \right\rceil \geq z) - o^\star(1).    
\end{aligned}
\label{final bound on R}
\end{equation}
Furthermore, $h(\log_\gamma(\sqrt{n}/|Z|)$ is a continuous random variable, since $h$ is injective by Lemma~\ref{lem: prop of h}. Hence, the upper and lower bound in~\eqref{final bound on R} converge, uniformly in $z\in\mathbb{Z}$, to
\[
    \Prob\left(
        \left\lceil 
            \log_m\ln n + h\big(\log_\gamma(\sqrt{n}/|Z|)\big)-1
        \right\rceil \geq z\right) .
\]
Finally, the fact that $\Prob(\lceil W-1\rceil\geq z)=\Prob(W\geq z)$ for any continuous random variable $W$ and $z\in \mathbb{Z}$, together with $h(x)=g(x)+x$, prove Theorem \ref{thm: main result}.

\subsection{Proof of Theorem \ref{thm: X wins}}

Recall from the end of Section~\ref{sec: proof overview} that, with probability $1-o^\star(1)$, eventually all vertices agree on the majority opinion at $t_1 = \log_\gamma C$, assuming $\log_\gamma C \in \mathbb{N}$. Opinion $X$ (resp.~$Y$) is the majority opinion at $t_1$ exactly when $X_{t_1}-n/2 \geq 0$ (resp.~$X_{t_1}-n/2\leq 0$). Thus,
\[
\Prob(D_n(X_0)=X)=\Prob(X_{t_1}-n/2 \geq 0) + o^\star(1)\quad \text{and}\quad \Prob(D_n(X_0)=Y)=\Prob(X_{t_1}-n/2\leq 0)+o^\star(1).
\]
Moreover Lemma \ref{lem: CLT} shows for $Z_{t_1}\stackrel{d}{=} \mathcal{N}\left(\gamma^{t_1}d, \frac{\gamma^{2t_1}-1}{4(\gamma^2-1)} \right)$ that
\[
\Prob(D_n(X_0)=X) = \Prob(Z_{t_1}\geq 0)+o^\star(1)
\quad\text{and}\quad
\Prob(D_n(X_0)=Y)=\Prob(Z_{t_1}\leq 0)+o^\star(1).
\]
This proves the claim, since $Z_{t_1}/C \stackrel{d}{\to} Z$ with $Z\stackrel{d}{=}\mathcal{N}\left(d,\frac{1}{4(\gamma^2-1)}\right)$.

\subsection{Proof of Corollary \ref{cor: main 2}}

Recall from Lemma~\ref{lem: prop of g} that $g$ is $1$-periodic and continuous, so that in particular $g(Z_n)$ is bounded. The claim then follows by the simple observation that $Z_n = \log_m\ln n + O_{\mathbb{P}}(1)$ and Theorem~\ref{thm: main result}.

\subsection{Proof of Corollary \ref{cor: main 1}}
Let $x,y\in[0,1)$ and $(n_i)_{i\in\mathbb{N}}$ be a strictly increasing sequence in $\mathbb{N}$ such that
\begin{equation}\label{eq: subsequence}
\left\{\tfrac{1}{2}\log_\gamma n_i\right\}\to x
~~
\text{and}
~~
\left\{\log_m\ln n_i\right\} \to y
\quad
\text{as}
\quad
i \to\infty.
\end{equation}
Let $s\in \mathbb{Z}$ and $t=s+\left\lfloor\frac{1}{2}\log_\gamma n_i\right\rfloor+\left\lfloor\log_m\ln n_i\right\rfloor$. Then
\[
\Prob\Bigl(R_{n_i}(X_0) - \left(\left\lfloor\tfrac{1}{2} \log_\gamma n_i\right\rfloor + \lfloor \log_m\ln n_i\rfloor\right)\geq s\Bigr) = \Prob\left(R_{n_i}(X_0)\geq t\right).
\]
Since $g$ is $1$-periodic by Lemma~\ref{lem: prop of g} and $h(x)=g(x)+x$, it follows from Theorem \ref{thm: main result} that 
\[
    \Prob\left(R_{n_i}(X_0)\geq t\right)
    = \Prob
        \Bigl(\left\{\log_m\ln n_i\right\}
            +h\left(\left\{\tfrac{1}{2}\log_\gamma n_i\right\}-\log_\gamma|Z|\right)\geq s
        \Bigr)+o(1).
\]
Using that $Z \stackrel{d}{=} \mathcal{N}\big(d, 1/4(\gamma^2 - 1)\big)$ is continuous, that $h$ is injective by Lemma~\ref{lem: prop of h}, and \eqref{eq: subsequence}, we obtain, as $i \to \infty$,
\[
\Prob\Bigl(\left\{\log_m\ln n_i\right\}+h\left(\left\{\tfrac{1}{2}\log_\gamma n_i\right\}-\log_\gamma|Z|\right)\geq s\Bigr) \to \Prob\Bigl( h(x-\log_\gamma|Z|)+y\geq s\Bigr).
\]
This proves the claim, since $\Prob( h(x-\log_\gamma|Z|)+y\geq s) = \Prob( \left\lfloor h(x-\log_\gamma|Z|)+y\right \rfloor\geq s)$ for any $s\in\mathbb{Z}$.

\subsection{Proof of Lemma \ref{lem: tails Z}}

We recall the following (folklore) tail bounds for the normal distribution. Let~$W\stackrel{d}{=} \mathcal{N}(\mu,\sigma^2)$. Then,
\begin{equation}\label{lem: gaussian tail}
\Prob(|W|\leq z) = \Theta(z), \ \text{as } z\to 0,
\quad\text{and}\quad
\Prob(|W|\geq z) =\exp\left(-\Theta\left(z^2\right)\right), \ \text{as } z\to \infty.
\end{equation}
Let $x,y\in [0,1)$ and $H\stackrel{d}{=}\lfloor h(x-\log_\gamma|Z|) + y\rfloor$, where $Z \stackrel{d}{=} \mathcal{N}\big(d,1/4(\gamma^2-1)\big)$. Since the function $g$ is periodic, it attains its maximum value $g_{\text{max}}$ and minimum value $g_{\text{min}}$ on $\mathbb{R}$. Thus, $z+g_{\text{min}}\leq z+g(z)=h(z) \leq z+g_{\text{max}}$. Therefore, 
\[
\Prob(H\geq t)=\Prob(\lfloor h(x-\log_\gamma|Z|) + y\rfloor\geq t) = \Prob\left(|Z|\leq \Theta\left(\gamma^{-t}\right)\right).
\]
Applying \eqref{lem: gaussian tail} proves the upper tail. Similarly, the lower tail follows by
\begin{align*}
    \Prob(H\leq -t) = \Prob\left(\lfloor h(x-\log_\gamma|Z|) + y\rfloor\leq -t\right) = \Prob\left(|Z|\geq \Theta\left(\gamma^{t}\right)\right) = \exp\left(-\Theta\left(\gamma^{2t}\right)\right).
\end{align*}

\subsection{Proof of Corollary \ref{cor: concentration for large d}}
Recall that $X_0=n/2 +d\sqrt{n}$ with $d=\omega(1)$ and $d\leq (1/2-\varepsilon)\sqrt{n}$ for some $0<\varepsilon< 1/2$. In Section \ref{sec: proof overview}, we use the parameter $C$ to describe a bias of $C\sqrt{n}$ towards the majority opinion at time $t_1=\lfloor\log_\gamma C \rfloor$ and establish that, whp and as $C\to \infty$, the majority opinion does not change after~$t_1$.
Here, we already start with an initial bias of $d\sqrt{n}$ with $d=\omega(1)$ towards $X$, so that, whp, $X$ is the dominating opinion.
Note that we can apply Lemma \ref{lem: T_2} with
\[
    t_1=0,
    \quad T_2 = t_2=\left\lfloor\tfrac{1}{2}\log_\gamma(n/d^2)\right\rfloor -a,
    \quad \Phi=\left\{\tfrac{1}{2}\log_\gamma(n/d^2)\right\},
\]
which are now deterministic quantities, as that lemma addresses only what happens after $t_1$ and the process is Markovian.
We obtain that for sufficiently large $a\in \mathbb{N}$ and $n\in\mathbb{N}$, 
\[
    \frac12 + \gamma^{-a-\Phi} (1-c\gamma^{-a}) \leq f^{(t_2)}(X_{0}/n) \leq \frac12 + \gamma^{-a-\Phi}.
\]
From here on we follow exactly the same steps as in the proof of Theorem~\ref{thm: main result}. In particular, there exists $c>0$ such that with probability $1-o^\star(1)$, for $u,\ell$ given by
\begin{align*}
    \ell =f^{(b)}\big(1/2+\gamma^{-a-\Phi}(1-c\gamma^{-a})\big) \cdot (1-C^{-1/4})
    \quad\text{and}\quad
    u= f^{(b)}\big(1/2+\gamma^{-a-\Phi}\big)\cdot(1+C^{-1/4})
\end{align*}
we have that $un\leq X_{t_3}\leq \ell n, \text{ where } t_3=t_2+b$.
Then, Lemmas~\ref{lem: lower and upper bound runtime} and~\ref{lem: lower and upper bound runtime 2} imply, with probability $1-o^\star(1)$, 
\begin{align*}
    R_n(X_{t_3}) &\precsim\left\lceil \log_m\ln n -\log_m|\ln(1-u)|+\varepsilon_+(u)+o(1)  \right\rceil, \\
    R_n(X_{t_3}) &\succsim\left\lceil \log_m\ln n -\log_m|\ln(1-\ell)|-\varepsilon_+(\ell)-o(1)  \right\rceil,      
\end{align*}
where $\varepsilon_+,\varepsilon_-:[1/2,1] \to \mathbb{R}$ such that $\varepsilon_+(x), \varepsilon_-(x) \to 0$ as $x \to 1$.
As in Lemma~\ref{lem: l and u close}, it follows that $u,\ell$ converge to one and that the double logarithmic terms containing $u,\ell$ are arbitrarily close. 
Recall that the function
\[
    g(x)=1-x+\lim_{a\to \infty, a\in\mathbb{N}} \lim_{b \to \infty, b\in \mathbb{N}} b-a- \log_m|\ln( f^{(b)}(1/2-\gamma^{-a-x}))|
\]
is continuous and $1$-periodic.
Thus, with probability $1-o^\star(1)$,
\begin{align*}
    R_n(X_0) &\precsim \left\lceil \tfrac{1}{2}\log_\gamma (n/d^2)+\log_m\ln n+g\left(\tfrac{1}{2}\log_\gamma(n/d^2)\right)-1+o^\star(1)\right\rceil
\end{align*}
and $\succsim$ also holds if we replace $+o^\star(1)$ with $-o^\star(1)$. This proves the claim.

\subsection{Proof of Lemma \ref{lem: CLT}}\label{subsec: Proof of CLT}
We proceed by induction over $t \in \mathbb{N}_0$. The base case holds by definition of $X_0$. Since $X_0$ is non-random, in the case $t=1$, we can use directly the Lindeberg-Feller CLT so that
\[
    \frac{X_1-n/2}{\sqrt{n}} \stackrel{d}{=} \frac{\Bin\left(n,1/2+\gamma d n^{-1/2}+o(n^{-1/2})\right)-n/2}{\sqrt{n}} \stackrel{d}{\to} \mathcal{N}\left(\gamma d,1/4\right).
\]
For the induction step, assume that the claim holds for $\overline{X}_t$, $t\geq1$, where $\overline{X}_t := n^{-1/2}(X_t-n/2)$. Let $\varepsilon > 0$ and define the event
\[
    E_j := \left\{ j\varepsilon < \overline{X}_t \le (j+1)\varepsilon \right\}\quad \text{for}\ j \in \mathbb{Z}.
\]
Set $A := \varepsilon^{-3/2}$. Then, by the law of total probability,
\begin{equation}
\label{eq:1.8firstest}
    \Prob\left(\overline{X}_{t+1} \leq x \right) = 
    \sum_{|j|\leq A} \Prob\left(\overline{X}_{t+1} \leq x \mid E_j \right) \Prob\left(E_j \right) + O(R_A),
\end{equation}
where $R_A$ is an error term which can be bounded as follows. Note that the continuous mapping theorem asserts that $|\overline{X}_t| \stackrel{d}{\to} |\mathcal{N}(\gamma_t,\sigma_t^2)|$. We obtain
\begin{align*}
    R_A 
    \leq \Prob\left(\left|\overline{X}_t\right| > A \varepsilon\right)  =
    \Prob\left(\left|\mathcal{N}\left(\gamma_t, \sigma_t^2\right)\right| > A \varepsilon\right) + o(1).
\end{align*}
This shows that, if we choose $\varepsilon > 0$ small enough, then we can make $R_A$, up to the $o(1)$ term, as small as we like. It remains to consider the main term in~\eqref{eq:1.8firstest}, in particular $|j|\leq A$. Recall that conditional on $X_t$
\[
    X_{t+1} \stackrel{d}{=} \Bin\left(n,f\left(\frac{1}{2}+\frac{X_t-n/2}{n}\right)\right)  \stackrel{d}{=} \Bin\left(n,f\left(\frac{1}{2}+\frac{\overline{X}_t}{\sqrt{n}}\right)\right).
\]
Recall also that $f(1/2 +x) = 1/2 +\gamma  x+O(x^2)$, since $f$ is of majority-type. Thus, if we condition on $E_j$, then,
\begin{align*}
    X_{t+1}\succeq \Bin\left(n,f\left(1/2+j\varepsilon/\sqrt{n}\right)\right) \stackrel{d}{=} \Bin\left(n,1/2 + \gamma j \varepsilon /\sqrt{n}+ O\left(j^2\varepsilon^2/n\right)\right)  
\end{align*}
and similarly
\[
    X_{t+1} \preceq \Bin\left(n,f\left(1/2+(j+1)\varepsilon/\sqrt{n}\right)\right) \stackrel{d}{=}  \Bin\left(n, 1/2+\gamma j\varepsilon/\sqrt{n} + O\left(\varepsilon/\sqrt{n}+j^2\varepsilon^2/n\right) \right).
\]
These bounds are uniform in $\varepsilon$. Hence, conditioned on $E_j$,
\[  
    X_{t+1} \stackrel{d}{=} \Bin\left(n, 1/2 +\gamma    j\varepsilon/\sqrt{n} + O\left(\varepsilon/\sqrt{n}+j^2\varepsilon^2/n\right) \right).
\]
By applying the Lindeberg-Feller CLT, conditioned on $E_j$,
\[
    \frac{X_{t+1}-n/2-\gamma j\varepsilon \sqrt{n}}{\sqrt{n}/2} + R_\varepsilon \stackrel{d}{\to} \mathcal{N}(0,1),
    \quad\text{for } |j|\leq A,
\] 
where $R_\varepsilon$ denotes an error term tending to zero as $\varepsilon\to0$ and uniformly in $j$.
    Thus,
    \[
    \Prob\left(\overline{X}_{t+1}\leq x \mid E_j \right) = \Prob\left(\gamma j \varepsilon+\mathcal{N}\left(0,1/4\right)-R_\varepsilon \leq x \right) +o(1), \quad \text{for }|j|\leq A.
    \]
    Let $f_t$ denote the density function of $\mathcal{N}(\mu_t,\sigma_t^2)$. By applying the induction hypothesis,
    \begin{align*}
    \Prob\left(E_j \right) &= \Prob\left(j\varepsilon <\mathcal{N}\left(\mu_t, \sigma_t^2\right)\leq (j+1)\varepsilon\right) +o(1) = 
    \int_{(j\varepsilon,(j+1)\varepsilon]} f_t(x)\,dx + o(1),
    \quad \text{for }|j|\leq A.
    \end{align*}
    By the mean value theorem, there exists $\xi\in (j\varepsilon,(j+1)\varepsilon]$ such that $\varepsilon f_t(\xi)=\int_{(j\varepsilon,(j+1)\varepsilon]} f_t(x)\,dx$.
    Since $|j|\leq A=\varepsilon^{-3/2}$, we have uniformly in $|j|\leq A$,
    \begin{align*}
    \left|\log\frac{f_t(\xi)}{f_t(j\varepsilon)}\right|
    \leq \frac{|\xi-j\varepsilon|\,|j\varepsilon-\mu_t|}{\sigma_t^2}+\frac{|\xi-j\varepsilon|^2}{2\sigma_t^2}\leq R'_\varepsilon,
    \end{align*}
    for some error term $R'_\varepsilon$ that tends to zero with $\varepsilon \to 0$.
    Therefore $\Prob\left(E_j \right) = \varepsilon f_t(j\varepsilon)+o(1)+R'_\varepsilon$. So, 
    \begin{align*}
    &\sum_{|j|\leq A} \Prob\left(\overline{X}_{t+1}\leq x \mid E_j \right) \Prob\left(E_j \right) = \sum_{|j|\leq A} \Prob\left(\gamma j \varepsilon +\mathcal{N}(0,1/4)-R_\varepsilon \leq x \right)\,
    \varepsilon\, f_t(j \varepsilon) + o(1)+R'_\varepsilon.
    \end{align*}
    Observe that, as $n\to \infty$ and $\varepsilon\to0$, this becomes the Riemann sum of the integral
    \[
    \int_{\mathbb{R}} \Prob(\mathcal{N}(0,1/4) \leq x-\gamma y)\, f_t(y) \, dy.
    \]
    But this is just the convolution of two Gaussian random variables, i.e, it equals
    \begin{align*}
    \Prob(\mathcal{N}(0,1/4) + \gamma\mathcal{N}(\mu_t,\sigma_t^2) \leq  x) 
    = \Prob(\mathcal{N}(\gamma\mu_t,1/4 + \gamma^2\sigma_t^2) \leq  x).
    \end{align*}
    In conclusion, this shows $
    \Prob\left(\overline{X}_{t+1}\leq x \right) = 
    \Prob(\mathcal{N}(\gamma\mu_t,1/4 + \gamma^2\sigma_t^2) \leq  x) + o(1)$.
    Hence, $\mu_{t+1} = \gamma \mu_t$ and $\sigma^2_{t+1} = 1/4 + \gamma^2\sigma_t^2$ from which the claimed expressions for $\mu_t$ and $\sigma_t$ readily follow. 

\subsection{Proof of Lemma \ref{lem: MiddlePhases}}\label{subsec: Proof of Middle Phases}
Let $t\in \mathbb{N}_0$ and recall $X_{t+1}\stackrel{d}{=}\Bin(n,f(X_t/n))$. Thus
\[
\E[X_{t+1} \mid X_t]=\E[\Bin(n,f(X_t/n)) \mid X_t]  = f (X_t/n)n.
\]
Hence, by Hoeffding's inequality, we get for any $t\in \mathbb{N}_0$ and $x>0$,
\begin{align}\label{inequ:middlephase1}
    \Prob \left(| X_{t+1} - f(X_t/n)n| \geq x \right) \leq 2\exp\left({-2x^2}/n\right).
\end{align}
Furthermore, as preparation, set
\[
    x_t := f^{(t)}(X_{t_1}/n), 
    \quad
    r_0:= \sqrt{C}, 
    \quad \text{and}\quad  
    r_{t+1} := \gamma r_t+\sqrt{C} t \quad \text{for }t\geq 1.
\]
Define the (bad) events
\begin{align*}
    A_t :=\left\{ |X_{t_1+t} - x_t n  | \geq r_t \sqrt{n} \right\},
    \quad t\in\mathbb{N}_0.
\end{align*}
As we will see, $r_t$ grows sufficiently fast so that $A_t$ is (very) unlikely. We claim that
\begin{equation}\label{eq1: proof lem:middlephase}
    \Prob(A_{t} \mid \overline{A}_{t-1}) \leq 2\exp(-2Ct^2),
    \quad t\ge 1.
\end{equation}
Before proving~\eqref{eq1: proof lem:middlephase}, we demonstrate how it  implies the statement of the lemma. To that end, observe that applying the union bound and using that $\Prob(A_0) = 0$ yields
\begin{align*}
    \Prob\left(\bigcap_{t\geq 0} \left \{ | X_{t_1+t} - x_t n | \leq r_t\sqrt{n} \right \} \right) 
    &\geq 1- \Prob(A_0) - \sum_{t \geq 1} \Prob\left(A_{t}\cap \overline{A}_{t-1} \right) \geq 1- \sum_{t \geq 1} \Prob\left(A_{t}\mid \overline{A}_{t-1} \right)\\
    &\geq 1- 2\sum_{t\geq 1} \exp(-2Ct^2)
\end{align*}
that is $\ge 1 - e^{-C}$.
Moreover, by definition
\begin{align*}
r_t &= \sqrt{C}\gamma^t  + \sqrt{C}\left(\sum_{1\leq j\leq t} j\gamma^{-j} \right)\gamma^{t} \leq \sqrt{C}\left(1 + \frac{\gamma}{(\gamma-1)^2}\right)\gamma^t ,
\end{align*}
and the proof is completed, assuming~\eqref{eq1: proof lem:middlephase}, which we establish now. 
The properties of $f$ yield the following convenient upper bound on $f'$. Since $f$ is convex on $[0,1/2]$ and satisfies the symmetry relation $f(1-x)=1-f(x)$, it is concave on $[1/2,1]$. Consequently, $f'$ attains its maximum at $1/2$ and from the local expansion $f(1/2 +x) = 1/2 +\gamma  x+O(x^2)$ we know that $f'(1/2)=\gamma$. 
Moreover, let $z_1, z_2 \in [0,1]$ with $z_1\pm z_2 \in [0,1]$.
By the mean value theorem, there exists $\kappa \in [0,1]$ such that
\begin{align}\label{inequ: lem1.5 1}
| f(z_1\pm z_2) -f(z_1) | \leq | f'(\kappa) | z_2 \leq \gamma z_2.
\end{align}
We choose $z_1=x_{t-1}$ and $z_2=r_{t-1}/\sqrt{n}$. Then, $\overline{A}_{t-1}$ is the same as $|X_{t_1+t-1}/n-z_1|<z_2$.
Thus, since $f$ is monotonically increasing and $x_t=f(z_1)$, applying (\ref{inequ: lem1.5 1}) shows 
\begin{align}\label{inequ:middlephase3}
   \overline{A}_{t-1} \implies | f(X_{t_1+t-1}/n)-x_{t}| \leq \gamma r_{t-1}/ \sqrt{n}.
\end{align}
Applying the triangle inequality yields
\begin{align*}
    \Prob(A_{t}\mid \overline{A}_{t-1}) \leq \Prob\left(| X_{t_1+t}- f(X_{t_1+t-1}/n)n| \geq r_t \sqrt{n} - | f(X_{t_1+t-1}/n) - x_t | n\mid \overline{A}_{t-1}\right).
\end{align*}
Moreover, we use \eqref{inequ:middlephase3} and the definition of $r$ to obtain
\begin{align*}
    \Prob(A_t \mid \overline{A}_{t-1}) &\leq \Prob(| X_{t_1+t}- f(X_{t_1+t-1}/n)n| \geq r_t \sqrt{n} - \gamma r_{t-1} \sqrt{n}) \\
    &= \Prob(| X_{t_1+t}- f(X_{t_1+t-1}/n)n| \geq \sqrt{Cn}t).
\end{align*}
By using~\eqref{inequ:middlephase1} this directly establishes~\eqref{eq1: proof lem:middlephase}.

\subsection{Proof of Lemma \ref{lem: T_2}}\label{subsec: Proof of T_2}

We first show that, whp, $0<\tilde X_{t_1} \leq n^{3/4}$, which guarantees that $t_1 \leq T_2 <\infty$, since $T_2  = t_1 + \lfloor\log_\gamma(n/\tilde X_{t_1})\rfloor-a$.
By Lemma \ref{lem: CLT}, we have
\[
    \Prob(0<\tilde X_{t_1}\leq n^{3/4}) = \Prob(0<|Z_C|C \leq n^{1/4}) + o(1).
\]
Since $CZ_C \stackrel{d}{=} \mathcal{N}(\mu_{t_1},\sigma_{t_1}^2)$
is a continuous random variable and independent of $n$, this proves the claim. 
We next prove the bounds on $f^{(T_2-t_1)}(X_{t_1}/n)$ in~\eqref{eq:t2-2}. 
As a preparation, recall that $X_{t_1}/n \geq 1/2$ and $f(1/2)=1/2$. Since $f(1)=1$ and $f$ is concave on $[1/2,1]$, we get $f^{(t)}(X_{t_1}/n) \in [1/2,1]$ for $t\in \mathbb{N}$. Thus, in the remainder, when we refer to $f$, we actually consider the restriction $f|_{[1/2,1]}$ of $f$ on $[1/2,1]$, which is concave. With this at hand we proceed to establish the upper bound on $f^{(T_2-t_1)}(X_{t_1}/n)$.
Since concave functions stay under their tangents we can assume that $f(1/2+x)\leq 1/2+ \gamma x$. Iterating this directly shows for any $t\in \mathbb{N}$ that
\[
f^{(t)}(1/2+x) \leq 1/2 + \gamma^{t} x.
\]
Substituting $t=T_2-t_1$ and $ X_{t_1}=n/2+\tilde X_{t_1}$ yields
\[
 f^{(T_2-t_1)}(X_{t_1}/n) \leq 1/2 +\gamma^{-a-\Phi},
 \quad
 \text{where}\;
 \Phi = \{\log_\gamma(n/\tilde X_{t_1})\}.
\]
Let us continue with the lower bound on $f^{(T_2-t_1)}(X_{t_1}/n)$.
Since $f$ is twice continuously differentiable, by Taylor's theorem, for any $x\in[0,1/2]$, there is $\overline{x} \in [0,1/2]$ such that
\[
f(1/2+x) = f(1/2) + f'(1/2)x + \frac{f''(1/2+\overline{x})}{2}x^2 =1/2+ \gamma x + \frac{f''(1/2+\overline{x})}{2}x^2.
\]
Moreover, $w_2:=-\min_{[1/2,1]}f''(x)/2>0$ exists and hence for any $x\in[0,1/2]$,
\begin{align}\label{ineq: proof 1.4 (1)}
f(1/2+x) \geq 1/2 + \gamma x (1-\gamma^{-1}w_2x).
\end{align}
We claim that this implies that
\begin{align}\label{ineq: proof 1.4 (2)}
    f^{(t)}(1/2+x) &\geq 1/2+ \gamma^{t} x  \prod_{0\leq j\leq t-1}\left(1- \gamma^{j-1}w_2 x\right),
    \quad \text{for } 0\leq x \leq \gamma^{1-t}/w_2.
\end{align}
The base cases $t\in \{0,1\}$ in~\eqref{ineq: proof 1.4 (2)} follow by definition and~\eqref{ineq: proof 1.4 (1)}. Moreover, the induction step follows by the monotonicity of $f$ and (\ref{ineq: proof 1.4 (1)}),
\begin{align*}
f^{(t+1)}\left(\frac12+x\right) &\geq f\left(\frac{1}{2}+\gamma^{t} x \prod_{j=0}^{t-1}\left(1-\gamma^{j-1}w_2 x \right)\right)
\geq \frac{1}{2} +\gamma^{t+1} x  \left(1- \gamma^{t-1}w_2x\right) \prod_{j=0}^{t-1}\left(1- \gamma^{j-1}w_2 x\right).
\end{align*}
By the Weierstrass product inequality,~\eqref{ineq: proof 1.4 (2)} implies for any $0\leq x \leq \gamma^{1-t}/w_2$
that
\begin{align}
\label{ineq: proof 1.4 (3)}
    f^{(t)}\left(\frac{1}{2}+x\right) &\geq \frac{1}{2}+ \gamma^{t} x  \left(1- \sum_{0\leq j\leq t-1}\gamma^{j-1}w_2 x\right) \geq \frac{1}{2}+ \gamma^{t}x   \left(1- \gamma^{t}w_2  (\gamma-1)^{-1}x\right).
\end{align}
We define $c:=w_2/(\gamma-1)$ and set $t= T_2-t_1= \lfloor \log_\gamma (n/\tilde X_{t_1})\rfloor-a$ and $x=\tilde X_{t_1}/n$, so that 
\[
f^{(T_2-t_1)}\left(\frac{X_{t_1}}{n}\right) =f^{(T_2-t_1)}\left(\frac12+\frac{\tilde X_{t_1}}{n}\right)  \geq \frac{1}{2}+ \gamma^{T_2-t_1} \frac{\tilde X_{t_1}}{n}  \left(1- \gamma^{T_2-t_1}c\frac{\tilde X_{t_1}}{n}\right).
\]
Substituting $\Phi = \{\log_\gamma(n/\tilde X_{t_1})\}$ yields
$f^{(T_2-t_1)}(X_{t_1} / n) \geq \frac{1}{2}+ \gamma^{-a-\Phi} \left(1- c\gamma^{-a}\right)$, as claimed.

\subsection{Proof of Lemma \ref{lem: T_2 consequence}}\label{subsec: proof of T_2 consequences}

We will combine the results of Lemmas~\ref{lem: MiddlePhases} and~\ref{lem: T_2}. First, choose $a_0$ as in Lemma~\ref{lem: T_2} so that there exists a constant $c>0$ such that for all $a\geq a_0$, whp,
\[
    \frac{1}{2}< \frac12 + \gamma^{-a-\Phi} (1-c\gamma^{-a}) \leq f^{(T_2-t_1)}(X_{t_1}/n) \leq \frac12 + \gamma^{-a-\Phi}< 1.
\]
Since, $f$ is monotonically increasing, this implies for any $b\in\mathbb{N}$ and $T_3=T_2+b$, whp,
\begin{equation}
\label{eq1: proof T_2 cons}
\frac{1}{2}<f^{(b)}\left(\frac12 + \gamma^{-a-\Phi} (1-c\gamma^{-a})\right) \leq f^{(T_3-t_1)}(X_{t_1}/n) \leq f^{(b)}\left(\frac12 + \gamma^{-a-\Phi}\right)<1.
\end{equation}
Then, Lemma~\ref{lem: MiddlePhases} together with~\eqref{eq1: proof T_2 cons} imply for a (possibly different) constant $c>0$, for $C > 1$ and with probability at least $1-o(1)-e^{-C}$,
\[
    \tilde L n\leq X_{T_3} \leq \tilde Un,
\]
where
\begin{equation*}
\begin{aligned}
    \tilde L &= f^{(b)}\big(1/2+\gamma^{-a-\Phi}(1-c\gamma^{-a})\big) \cdot (1-c\gamma^{-a+b}\sqrt{Cn}/\tilde X_{t_1}), \\
    \tilde U &= f^{(b)}\big(1/2+\gamma^{-a-\Phi}\big)\cdot(1+c\gamma^{-a+b}\sqrt{Cn}/\tilde X_{t_1}).
\end{aligned}
\end{equation*}
It remains to get a handy presentation of the error terms. Note that 
\[
\Prob\left(c\gamma^{-a+b}\sqrt{Cn}/\tilde X_{t_1} \leq C^{-1/4}\right) = \Prob\left( \tilde X_{t_1}/(\sqrt{n}C)\geq C^{-1/4} c\gamma^{-a+b}\right). 
\]
Applying the convergence result in~\eqref{eq:cltM} shows
\[
\Prob\left(c\gamma^{-a+b}\sqrt{Cn}/\tilde X_{t_1} \geq C^{-1/4}\right) = \Prob\left(|Z_C|\geq C^{-1/4} c\gamma^{-a+b}\right) + o(1).
\]
We assume from now on that $\log_\gamma C \in \mathbb{N}$ and recall that, as $C\to \infty$, $|Z_C|\stackrel{d}{\to} Z$ with $Z\stackrel{d}{=} \mathcal{N}(d,1/4(\gamma^2-1))$.
Thus, with room to spare, we choose $C_0$ such that for any $C\geq C_0$, 
\[
\Prob\left(c\gamma^{-a+b}\sqrt{Cn}/\tilde X_{t_1} \leq C^{-1/4}\right)\geq 1-C^{-1/4}/2.
\]
Consequently, with probability at least $1-o(1)-e^{-C}-C^{-1/4}/2$ and with $L,U$ as in~\eqref{eq:lu} we obtain that $L n\leq X_{T_3} \leq Un$, which completes the proof.

\subsection{Proof of Lemma \ref{lem: lower and upper bound runtime}}
\label{subsec: proof of lower and upper bound runtime}

Let $f$ be a majority type update function and $(X_t)_{t \in\mathbb{N}_0}$ and $(X'_t)_{t \in\mathbb{N}_0}$ be two independent protocols with associated update function $f$. Assume that $1/2 \leq x\leq x'\leq 1$ and
\[
 X_0 = xn 
 \quad\text{and}\quad
 X'_0= x'n.
\]
Let $M_t = \max\{X_t,n-X_t\}$ and $M_t' = \max\{X'_t,n-X'_t\}$ be the number of vertices that have adopted the majority opinion at time $t\in\mathbb{N}_0$.
We show that $M_t \precsim M_t'$ by induction over $t \in \mathbb{N}_0$, which proves the claim. The base cases $t\in\{0,1\}$ hold by assumption and basic properties of the binomial distribution.
For the induction step, let $t\geq 2$ and $M_t \precsim M_t'$. Note that since $Y_t = n-X_t$ we can also write $M_t = \max\{Y_t, n-Y_t\}$, and so,
\[
    M_{t+1} \stackrel{d}{=} \max\left\{W_{t+1},n-W_{t+1}\right\},
    \quad \text{where}\quad
    W_{t+1} \stackrel{d}{=}\text{Bin}(n,f(M_t/n-1/2)).
\]
An analogous statement holds for $M_{t+1}'$. 
Writing $W_{t+1}^s\stackrel{d}{=}\text{Bin}(n,f(s/n-1/2))$, by the law of total probability, 
\begin{align*}
\Prob(M_{t+1} = k) &=\sum_{n/2 \le s \le n}\Prob(M_{t+1} = k \mid M_t=s)\Prob(M_t=s) \\&=\sum_{n/2 \le s \le n}\Prob(\max\left\{ W_{t+1}^s,n- W_{t+1}^s\right\} = k )\Prob(M_t=s).
\end{align*}
The induction hypothesis asserts for any $n/2 \le s \le n$ that $\Prob(M_t=s)\leq \Prob(M_t'=s)$ and therefore, doing the same steps backwards,
\begin{align*}
\Prob(M_{t+1} = k) &\leq \sum_{n/2 \le s \le n}\Prob(\max\left\{ W_{t+1}^s,n- W_{t+1}^s\right\} = k )\Prob(M_t'=s) \\
&= \sum_{n/2 \le s \le n}\Prob(M_{t+1}' = k \mid M_t'=s)\Prob(M_t'=s) =\Prob(M_{t+1}' = k).
\end{align*}

\subsection{Proof of Lemma \ref{lem: lower and upper bound runtime 2}}\label{subsec: runtime 2 proof}
In order to gain sufficient control of $R_n(xn)$ we prove the following auxiliary lemma, which, similar to Lemma~\ref{lem: MiddlePhases}, provides control for approximating the process by iterates of $f$. 
\begin{lemma}
\label{lem: ending phase concentration}
Let $t^\star\in\mathbb{N}$ and $X_{t^\star}=n-Y_{t^\star} = x n$ for $1/2<x<1$.  Then, whp
\[
    \bigcap_{0 \leq t \leq 2 \log_m\ln n} \left\{\left|Y_{t^\star+t} - f^{(t)}(Y_{t^\star}/n)n \right|\leq \max\left\{Y_{t^\star+t}n^{-1/20}, n^{1/4}\right\} \right\}.
\]
\end{lemma}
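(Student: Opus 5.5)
We follow the same inductive scheme as in the proof of Lemma~\ref{lem: MiddlePhases}, but now track the minority count $Y$ with a \emph{multiplicative} error. By symmetry of $f$ we have $Y_{t+1}\stackrel{d}{=}\Bin(n,f(Y_t/n))$. Set $y_t:=f^{(t)}(Y_{t^\star}/n)$ and $\delta:=n^{-1/20}$. We introduce the bad events
\[
B_t=\bigl\{|Y_{t^\star+t}-y_t n|>\max\{\delta_t Y_{t^\star+t},\,n^{1/4}\}\bigr\},
\]
with a slowly growing error sequence $\delta_t\le\delta$ to be chosen. The plan is to bound $\Prob(B_t\mid \overline{B}_0\cap\dots\cap\overline{B}_{t-1})$ and then take a union bound over the $O(\log\log n)$ rounds $0\le t\le 2\log_m\ln n$. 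This short horizon is the reason a polynomially small per-round failure probability is enough.

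\emph{One-step concentration.} Conditionally on $Y_{t^\star+t-1}=s$, the variable $Y_{t^\star+t}$ is binomial with mean $\lambda=f(s/n)n$. A Chernoff bound gives
\[
|Y_{t^\star+t}-\lambda|\le \lambda^{1/2}n^{\epsilon}
\]
with probability $1-\exp(-n^{\Omega(1)})$ whenever $\lambda\ge n^{1/4}$, for a small fixed $\epsilon$ (say $\epsilon=1/100$). When $\lambda<n^{1/4}$, we instead have $Y_{t^\star+t}\le \lambda+n^{1/4}/2$ with probability $1-n^{-\omega(1)}$, or at least with probability $1-o(1/\log\log n)$. In the first case the deviation is at most $\lambda n^{-1/8+\epsilon}$, which is much smaller than $\delta\lambda$. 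In the second case the additive term $n^{1/4}$ absorbs it. Thus the random fluctuation in one round is of relative size $n^{-1/10}$, say, or else is absorbed into $n^{1/4}$.

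\emph{Propagation of the deterministic error.} We also need to compare $f(s/n)$ with $f(y_{t-1})$ when $s=y_{t-1}n(1\pm\delta_{t-1})$. In the relevant regime $y\le 1-x<1/2$, and $f$ is convex and increasing on $[0,1/2]$ with $f(y)=\beta y^m(1+O(y))$. Hence $yf'(y)/f(y)$ is bounded by some constant $K$ on $(0,1/2]$: it tends to $m$ at $0$ and is continuous on the compact set away from $0$. A mean value argument then gives
\[
f\bigl(y(1\pm\eta)\bigr)=f(y)\bigl(1\pm K'\eta\bigr)
\]
for small $\eta$. So the relative error multiplies by at most a constant $K'$ per step and picks up the new fluctuation $n^{-1/10}$. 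Taking $\delta_t=(K'+1)^t n^{-1/10}$, we get
\[
(K'+1)^{2\log_m\ln n}=(\ln n)^{O(1)},
\]
so $\delta_t\le n^{-1/20}$ throughout the whole window. We convert errors measured relative to $y_tn$ into errors relative to $Y_{t^\star+t}$ using $Y=y_tn(1+o(1))$. The case where the relevant quantities drop below the $n^{1/4}$ scale is handled by the additive term. Once $y_tn\lesssim n^{1/4}$, both $Y$ and $y_tn$ stay $O(n^{1/4})$ in later rounds because $f(y)\le y$ for small $y$. So the additive bound persists, and one needs a brief check that exceeding it is unlikely.

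The main obstacle is the bookkeeping at the transition between the multiplicative and additive regimes: the error inherited from the previous round, $\delta_{t-1}Y$ or $n^{1/4}$, must be mapped through $f$ consistently. When it comes from the additive term, we use $f(z)\le \beta' z^m$ together with $m\ge 2$: an additive error $n^{1/4}$ at level $y\lesssim n^{-3/4}$ maps to something of order $n^{1-3m/4}$, which is negligible. This is why the additive slack closes, but I expect this case analysis to require the most care.
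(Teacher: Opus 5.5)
Your proposal is correct and is essentially the paper's argument. Both run an induction over the $O(\log\log n)$ rounds in which a multiplicative error grows by a constant factor per round; the paper gets this factor from $f'(x)\le c_u x^{m-1}$ and $f(x)\ge c_\ell x^m$, which is exactly your bounded elasticity $yf'(y)/f(y)$. Both also use one-round concentration (Markov/Chebyshev via Lemma~\ref{lem: aux lem 2.5} in the paper, Chernoff in yours) and an additive $n^{1/4}$ floor below which $m\ge 2$ makes the next conditional mean polynomially small.

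For the transition you flagged, two observations suffice:
\begin{itemize}
\item Use the two-sided Chernoff bound already for conditional means at least $n^{1/4}/2$, not only for those at least $n^{1/4}$.
\item Whenever the additive term dominates $\delta_{t-1}Y_{t^\star+t-1}$, one has $Y_{t^\star+t-1}\le n^{7/20}$. Then both the next conditional mean and $y_t n$ are $O(n^{-3/10})$, which gives $Y_{t^\star+t}=0$ and $y_t n=o(1)$ whp.
\end{itemize}
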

The proof is at the end of the section. Note that we shift our focus from~$X$, the majority opinion, to~$Y$, the minority opinion. This change is natural, since we will utilize the majority-like behavior of~$f$ near zero, namely~$f(x)=\beta x^m+O(x^{m+1})$ as $x\to 0$. Moreover, note that the error bound in the lemma is much better than the one in Lemma~\ref{lem: MiddlePhases}, however, at the cost of being applicable for short periods of time only. 
Finally, let us remark that we will show that, starting from~$Y_{t^\star}$, the process needs less than~$2 \log_m\ln n$ rounds to terminate and consequently Lemma \ref{lem: ending phase concentration} is handy.

Next, we identify $n^{1-1/m}$ as a critical threshold in the following sense. If the number of vertices holding opinion $Y$ is significantly larger than $n^{1-1/m}$, then the process does not terminate in the next round whp. In contrast, if there are substantially less than $n^{1-1/m}$ vertices with opinion $Y$ left, all vertices will agree (on opinion $X$) after the next round whp.
\begin{lemma}\label{lem: threshold sqrt(n)}
    Let $t, t' \in \mathbb{N}$ such that $Y_t \geq n^{1-1/m}\ln n$ and $Y_{t'} \leq n^{1-1/m}/\ln n$. Then
    \[
    \Prob(Y_{t+1} = 0 \mid Y_t) = o(1)
    \quad 
    \text{and}
    \quad
    \Prob(Y_{t'+1} > 0\mid Y_{t'}) = o(1).
    \]
\end{lemma}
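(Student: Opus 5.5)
Conditionally on $Y_t$, the next round is $Y_{t+1}\stackrel{d}{=}\Bin(n,f(Y_t/n))$, because by symmetry $n-X_{t+1}\stackrel{d}{=}\Bin(n,1-f(X_t/n))=\Bin(n,f(Y_t/n))$. Hence
\[
\Prob(Y_{t+1}=0\mid Y_t)=(1-p)^n,\qquad \Prob(Y_{t+1}>0\mid Y_t)\le np,\qquad p:=f(Y_t/n),
\]
where the second bound follows from Markov's inequality (or a union bound over the $n$ vertices). So both statements reduce to estimating $p$ via the expansion $f(x)=\beta x^m+O(x^{m+1})$ as $x\to0$, together with the monotonicity of $f$.

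For the second claim, let $y=Y_{t'}/n\le n^{-1/m}/\ln n$. This tends to $0$, so $p=\beta y^m(1+O(y))\le 2\beta y^m$ for large $n$. By monotonicity of $f$ we may use the worst case $y=n^{-1/m}/\ln n$. Then
\[
np\le 2\beta n\cdot n^{-1}(\ln n)^{-m}=2\beta(\ln n)^{-m}=o(1),
\]
which is what we want.

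For the first claim, let $y=Y_t/n\ge n^{-1/m}\ln n$. If $y$ stays bounded away from $0$, i.e. $y\ge\delta$ for some fixed $\delta>0$, then $p\ge f(\delta)>0$ because $f$ is increasing and $f(x)>0$ for $x>0$. In that case $(1-p)^n\le e^{-nf(\delta)}=o(1)$. If $y<\delta$ with $\delta$ small enough that $f(x)\ge\beta x^m/2$ on $(0,\delta)$, then $np\ge \tfrac{\beta}{2}n y^m\ge\tfrac{\beta}{2}(\ln n)^m\to\infty$, and so $(1-p)^n\le e^{-np}=o(1)$. To make the monotonicity argument clean, I would take $\min\{y,\delta\}$ as the representative value. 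Also, $f(x)>0$ for $x>0$ follows from convexity and symmetry: $f$ is increasing, and its local expansion $\beta x^m$ with $\beta>0$ is positive.

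There is no real obstacle here. The only point needing care is uniformity: the bounds must not depend on the particular value of $Y_t$, only on the thresholds. This is guaranteed by the monotonicity of $f$ and because the $O(x^{m+1})$ error term is uniform on a small interval near $0$.
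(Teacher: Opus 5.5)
Your proof is correct and follows the paper's argument: for the first claim, $(1-p)^n\le e^{-np}$ with $np\ge c(\ln n)^m$, and for the second, Markov's inequality with $np\le C(\ln n)^{-m}$. The only cosmetic difference is that the paper uses global constants with $c_\ell x^m\le f(x)\le c_u x^m$ on all of $[0,1]$, whereas you get the lower bound by splitting into $y<\delta$ and $y\ge\delta$.
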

The proof is also at the end of this section. 
With those properties at hand we are now able to prove Lemma \ref{lem: lower and upper bound runtime 2}.
To that end, define
\begin{align*}
    \overline{t}_u := \min \left\{t \in \mathbb{N}_0 : Y_{t^\star +t}\leq n^{1-1/m}/\ln n\right\} 
    \quad
    \text{and}
    \quad
    \overline{t}_\ell := \min \left\{t \in \mathbb{N}_0 : Y_{t^\star+t}\leq n^{1-1/m}\ln n\right\}.
\end{align*}
Thus, by Lemma \ref{lem: threshold sqrt(n)}, whp,
\[
\overline{t}_\ell +1\precsim R_n(xn) \precsim \overline{t}_u +1.
\]
Let $y_{t^\star}=Y_{t^\star}/n = 1-x_{t^\star} = 1-x$. Lemma \ref{lem: ending phase concentration} shows that $(1+o(1))Y_{t^\star+t} = f^{(t)}(y_{t^\star})n + O(n^{1/4})$
for $t\leq 2 \log_m\ln n$.
Thus, for $n$ large enough 
\begin{align*}
    f^{(t)}(y_{t^\star})n \leq n^{1-1/m}/(2\ln n) &\implies Y_{t^\star+t} \leq n^{1-1/m}/\ln n
\end{align*}
as well as
\[
    Y_{t^\star+t} \leq n^{1-1/m} \ln n  \implies f^{(t)}(y_{t^\star})n \leq 2n^{1-1/m} \ln n.
\]
Setting
\[
    t_u := \min \left\{t \in \mathbb{N}_0 : f^{(t)}(y_{t^\star})n\leq \frac{n^{1-1/m}}{2\ln n } \right\},
    \quad
    t_\ell := \min \left\{t \in \mathbb{N}_0 : f^{(t)}(y_{t^\star})n\leq 2n^{1-1/m}\ln n \right\},
\]
we hence obtain whp
    $t_\ell +1\precsim R_n(xn) \precsim t_u +1$.
In the remainder of the proof we pin down $t_u$, $t_\ell$ by determining the smallest $t\in\mathbb{N}_0$ such that
\begin{equation}\label{inequ: End Phases1}
f^{(t)}(y_{t^\star}) \leq c_n n^{-1/m}, \quad \text{for } c_n \in \left\{1/(2\ln n ),2\ln n  \right \}. 
\end{equation}
To achieve this, note that since $f$ is of majority-type, we have $f(y) = \beta y^m +O(y^{m+1})$ as $y \to 0$. So, there are $0< y_0\leq 1/2$ and $r \geq 0$ such that $0<\beta y^m -ry^{m+1} \leq f(y) \leq \beta y^m + r y^{m+1}<1$ for $0< y \leq y_0$. Then, by induction, 
\begin{align*}
\label{inequ: End Phases2} 
    0
    <
    y^{m^t}(\beta -ry)^{(m^t-1)/(m-1)} 
    \leq
    f^{(t)}(y) \leq y^{m^t}(\beta +ry)^{(m^t-1)/(m-1)}
    <
    1,
    \quad \text{for } 0 <y \leq y_0.
\end{align*}
It is straightforward to verify that there are functions $\varepsilon_\star, \widetilde{\varepsilon_\star}:[0,1]\to\mathbb{R}$ such that $\varepsilon_\star(y), \widetilde{\varepsilon_\star}(\delta)\to 0$ as $y,\delta\to 0$ and such that
\[
    \tau = \log_m|\ln\delta|-\log_m|\ln y| + \varepsilon_\star(y)+ \widetilde{\varepsilon_\star}(\delta)
\]
solves the equation $y^{m^\tau}(\beta + O(y))^{(m^\tau-1)/(m-1)} = \delta$. Taking $\delta = c_n n^{-1/m}$ as in~\eqref{inequ: End Phases1} and noting that 
\[
    \log_m|\ln c_n n^{-1/m}| = \log_m \ln n - 1 + o(1)
\]
then yields the statement in the lemma for all $x = x_{t^\star}$ such that $y_{t^\star} = 1 - x \le y_0$.

\medskip
\noindent
To conclude this section we prove the following auxiliary lemma and then Lemmas~\ref{lem: ending phase concentration} and~\ref{lem: threshold sqrt(n)}.
\begin{lemma}\label{lem: aux lem 2.5}
Let $0 < \delta < 1/4$. Then, the event $\bigcap_{0 \le t \le 2\log_m\ln n}C_t$ occurs whp, where
\[
    C_t := \left \{\left|Y_{t+1} - \mathbb{E}[Y_{t+1}\mid Y_t] \right| \leq  \mathbb{E}[Y_{t+1}\mid Y_t]^{1/2+\delta} + n^\delta \right \}
    \quad
    \text{and}
    \quad
    \mathbb{E}[Y_{t+1}\mid Y_t] = f(Y_t/n)n.
\]
\end{lemma}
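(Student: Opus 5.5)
Conditionally on $Y_t$, we have $Y_{t+1}\stackrel{d}{=}\Bin(n,f(Y_t/n))$ with mean $\mu_t:=f(Y_t/n)n$. The plan is to bound $\Prob(\overline{C_t}\mid Y_t)$ uniformly in the value of $Y_t$ by a quantity that is $o(1/\ln\ln n)$, and then take a union bound over the $O(\log_m\ln n)$ indices $0\le t\le 2\log_m\ln n$. Since the bound is uniform in $Y_t$, we can average over $Y_t$ and the union bound gives $\Prob(\bigcup_t \overline{C_t}) = o(1)$. In fact we aim for the much stronger bound $\exp(-n^{\Omega(\delta)})$.

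The uniform bound comes from a Chernoff bound for the binomial. For $X\sim\Bin(n,p)$ with mean $\mu$ and any $\lambda>0$,
\[
\Prob(|X-\mu|\ge \lambda)\le 2\exp\Big(-\frac{\lambda^2}{2(\mu+\lambda/3)}\Big).
\]
We apply it with $\lambda=\mu^{1/2+\delta}+n^\delta$ and split into two cases according to the size of $\mu$.
\begin{itemize}
\item If $\mu\ge n^{\delta}$, then $\lambda\le 2\mu^{1/2+\delta}\le 2\mu$ since $\delta<1/2$, so the denominator is $O(\mu)$. Hence the exponent is at least of order $\mu^{1+2\delta}/\mu=\mu^{2\delta}\ge n^{2\delta^2}$.
\item If $\mu< n^{\delta}$, then $\lambda\ge n^\delta>\mu$. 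The denominator is $O(\lambda)$, so the exponent is at least of order $\lambda\ge n^\delta$.
\end{itemize}
The degenerate case $\mu=0$ (that is, $Y_t=0$) is trivial, since then $Y_{t+1}=0$ deterministically.

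In both cases $\Prob(\overline{C_t}\mid Y_t)\le 2\exp(-c\,n^{2\delta^2})$ for an absolute constant $c>0$, uniformly in $Y_t$. Multiplying by the number of rounds $O(\ln\ln n)$ still gives $o(1)$, which proves the lemma.

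The only delicate point is choosing the case split so that the exponent is bounded below by a positive power of $n$ in both regimes. The additive $n^\delta$ term handles the small-mean regime, and the $\mu^{1/2+\delta}$ term handles the large-mean regime. Everything else is a routine application of the Chernoff bound and a union bound.
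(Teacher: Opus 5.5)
Your proof is correct, and it takes a different route from the paper's.

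\textbf{The paper's route.} The paper splits according to the size of $Y_t$ relative to $n'=n^{(m-1)/m+\delta(m-1)/m^2}$. It then uses the sandwich $c_\ell x^m\le f(x)\le c_u x^m$ to turn this into bounds on the conditional mean $\mathbb{E}[Y_{t+1}\mid Y_t]$.
\begin{itemize}
\item Below the threshold the mean is $O(n^{\delta(m-1)/m})=o(n^\delta)$. So the lower deviation is impossible, and Markov's inequality handles the upper one.
\item Above the threshold it uses $\mathbb{V}[Y_{t+1}\mid Y_t]\le\mathbb{E}[Y_{t+1}\mid Y_t]$ and Chebyshev's inequality.
\end{itemize}
This needs only first and second moments. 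The price is a merely polynomially small failure probability per round, $O(n^{-\delta/m}+n^{-2\delta^2(m-1)/m})$. That is just enough for the union bound over $O(\ln\ln n)$ rounds.

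\textbf{Your route.} You split directly on the conditional mean $\mu$ and apply Bernstein's inequality to the binomial. This uses nothing about $f$ beyond the fact that $Y_{t+1}$ is conditionally binomial, and it treats both tails at once. It also gives a failure probability of $\exp(-\Omega(n^{2\delta^2}))$ per round. So the same argument would survive a union bound over polynomially many rounds, which the paper's Markov/Chebyshev bound would not.

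\textbf{One small slip.} In the case $\mu\ge n^\delta$, the inequality $\lambda\le 2\mu^{1/2+\delta}$ does not follow from $\mu\ge n^\delta$; it would need $\mu\ge n^{\delta/(1/2+\delta)}$. However, what you actually use still holds:
\begin{itemize}
\item $\lambda\le\mu^{1/2+\delta}+\mu\le 2\mu$, since $\mu\ge 1$;
\item $\lambda^2\ge\mu^{1+2\delta}$.
\end{itemize}
Hence the exponent is at least $\mu^{1+2\delta}/\bigl(2(\mu+2\mu/3)\bigr)=\tfrac{3}{10}\mu^{2\delta}\ge\tfrac{3}{10}n^{2\delta^2}$, and the conclusion stands.
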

\begin{proof} 
Let $n' := n^{(m-1)/m+\delta(m-1)/m^2}$. We distinguish two cases according to the value of $Y_{t}$,~i.e.,
\begin{align}
\label{equ: split C_t}
    \Prob(\overline{C_t}) = \Prob(\overline{C_t}\mathbf{1}_{\{Y_t < n'\}})+ \Prob(\overline{C_t}\mathbf{1}_{\{Y_t \geq n'\}}).
\end{align}
Since $f$ is of majority-type, there exist $c_\ell, c_u > 0$ such that 
\begin{align}
\label{inequ: monom bounds f}
    c_\ell x^m \leq f(x) \leq c_ux^m .
\end{align}
    We first consider $\Prob(\overline{C_t}\mathbf{1}_{\{Y_t < n'\}})$. Recall that $Y_{t+1}\stackrel{d}{=}\Bin(n,f(Y_t/n))$. On the event $Y_t < n'$, we have
    \begin{align}\label{inequ: 3}
    \mathbb{E}[Y_{t+1}\mid Y_t] = n f(Y_t/n) \leq c_u Y_t^m n^{1-m} \leq c_u n^{\delta (m-1)/m}.
    \end{align}
    If we assume $\overline{C_t}$, then either
    \begin{align}\label{inequ: 4}
    Y_{t+1} - \mathbb{E}[Y_{t+1} \mid Y_t] > \mathbb{E}[Y_{t+1}\mid Y_t]^{1/2+\delta} + n^\delta
    \end{align}
    or 
    \begin{align}\label{inequ: 5}
    \mathbb{E}[Y_{t+1} \mid Y_t]-Y_{t+1}>\mathbb{E}[Y_{t+1}\mid Y_t]^{1/2+\delta} + n^\delta.
    \end{align}
    Note that $\mathbb{E}[Y_{t+1} \mid Y_t]\geq0$.
    By \eqref{inequ: 3}, there exists $n_0$ such that for all $n\geq n_0$ case \eqref{inequ: 5} is impossible. In the following, we assume that $n\geq n_0$. Thus, by~\eqref{inequ: 4} and~\eqref{inequ: 3},
    $
    Y_{t+1} \geq n^{\delta} \geq c_u^{-1} n^{\delta/m}  \mathbb{E}[Y_{t+1}\mid Y_t].
    $
    By applying Markov's inequality, we obtain
    \begin{align}\label{inequ: first summand}
    \Prob(\overline{C_t}\mathbf{1}_{\{Y_t < n'\}}) \leq \Prob(Y_{t+1} \geq c_u^{-1} n^{\delta/m}  \mathbb{E}[Y_{t+1}\mid Y_t]) \leq c_un^{-\delta/m}.
    \end{align}
    It remains to treat the case $Y_t \geq n'$. Applying~\eqref{inequ: monom bounds f} and the definition of $n'$ imply
    \[
    \mathbb{E}[Y_{t+1}\mid Y_t] = nf(Y_t/n) \geq c_\ell Y_t^m n^{1-m} \geq c_\ell n^{\delta(m-1)/m}.
    \]
    Since $Y_{t+1}\stackrel{d}{=}\Bin(n,f(Y_t/n))$, its conditional variance satisfies $\mathbb{V}[Y_{t+1}\mid Y_t]=nf(Y_t/n)(1-f(Y_t/n))$.
    Furthermore, as $f(x)\in[0,1]$ for any $x\in[0,1]$,
    \[
    \mathbb{E}[Y_{t+1}\mid Y_t] = nf(Y_t/n) \geq nf(Y_t/n)(1-f(Y_t/n)) = \mathbb{V}[Y_{t+1}\mid Y_t].
    \]
    Hence, $\overline{C_t}=\left\{\left|Y_{t+1} - \mathbb{E}[Y_{t+1}\mid Y_t] \right| >  \mathbb{E}[Y_{t+1}\mid Y_t]^{1/2+\delta} + n^\delta \right\}$ implies with room to spare
    \[
    \left| Y_{t+1} - \mathbb{E}[Y_{t+1}\mid Y_t] \right|> \mathbb{E}[Y_{t+1}\mid Y_t]^{1/2+\delta} \geq c_\ell^{\delta}n^{\delta^2 (m-1)/m} \mathbb{V}[Y_{t+1}\mid Y_t]^{1/2}.
    \]
    Applying Chebyshev's inequality yields 
    \begin{align}\label{inequ: second summand}
    \Prob(\overline{C_t}\mathbf{1}_{\{Y_t \geq n'\}}) \leq \Prob(\left| Y_{t+1} - \mathbb{E}[Y_{t+1}\mid Y_t] \right|\geq c_\ell^{\delta}n^{\delta^2 (m-1)/m} \mathbb{V}[Y_{t+1}\mid Y_t]^{1/2}) \leq  c_\ell^{-2\delta}n^{-2\delta^2(m-1)/m}.
    \end{align}
    Combining~\eqref{equ: split C_t},~\eqref{inequ: first summand} and~\eqref{inequ: second summand}, we obtain $\Prob(\overline{C_t}) = n^{-O(1)}$, and the statement in the lemma follows with room to spare from a union bound.
\end{proof}
\begin{proof}[Proof of Lemma \ref{lem: ending phase concentration}]
Since $(X_t)_{t\in \mathbb{N}_0}$ is markovian we can assume without loss of generality that $t^\star = 0$. In particular, $X_0 = xn$ and $Y_0 = (1-x)n$.
We abbreviate
\[
    \Delta_{t} = \left|Y_{t} - f^{(t)}(Y_0/n)n \right|,
    \quad\text{for } 0 \le t \le 2 \log_m\ln n.
\]
For later reference, since $f$ is of majority-type, we have $f(x)=\beta x^m + O(x^{m+1})$ as $x\to 0$. Moreover, the first $m-1$ derivatives of $f$ in zero are zero, since the forward difference representation shows
\begin{align*}
    \frac{d^s}{dx^s} f (0) &= \lim_{h\searrow 0} \frac{\sum_{j=0}^{s} (-1)^{s-j}\binom{s}{j}f(jh)}{h^s} = \lim_{h\searrow 0} \sum_{j=1}^{s} (-1)^{s-j} j^s \binom{s}{j} \frac{f(jh)}{(jh)^s} = 0 
    \quad\text{for }
    1\leq s <m.
\end{align*}
It follows by Taylor's theorem that
\begin{align}\label{equ: formula derivative}
    f'(x)=m\beta x^{m-1} + O(x^{m}).
\end{align}
Hence, there exist $c_u,c_\ell>0$ such that for $x\in[0,1]$,
\begin{align}\label{inequ: f upper lower x^m}
    c_\ell x^m \leq f(x) \leq c_ux^m
    \quad\text{and}\quad
    f'(x)\leq c_ux^{m-1}.
\end{align}
Moreover, if $Y_{t'} < n^{1/4}$ for some $t' \in \mathbb{N}$, then $Y_{t'+1} =0$ whp by Lemma \ref{lem: threshold sqrt(n)}. Hence, let $\overline{T} \in \mathbb{N}_0 \cup \{\infty\}$ be such that, whp,
\begin{align}\label{consequ of Lem 2.8}
    Y_{t} \geq n^{1/4} ~ \text{for all} ~  t\leq \overline{T}
    \quad \text{and} \quad
    Y_{t} < n^{1/4} ~ \text{for all} ~  t > \overline{T}.
\end{align}
In order to prove the lemma, we condition
on the event \eqref{consequ of Lem 2.8} and additionally
on the event in Lemma~\ref{lem: aux lem 2.5}, where, with foresight, $\delta=1/14$. We claim for $n$ large enough and $\lambda:=2^{m+1}c_u/c_\ell$ that
\begin{equation}
\label{eq:boundD_t}
    \Delta_t \leq \lambda^tY_t/n^{1/14},
    ~~\text{for}~~
    t\leq \min\{\overline{T}, 2\log_m\ln n\}
    \quad\text{and}\quad
    \Delta_{t} \leq n^{1/4}
    ~~\text{for}~~
    \overline{T} < t \le 2\log_m\ln n .
\end{equation}
This implies the statement of the lemma, since $\lambda^t = n^{o(1)}$ for $t \leq 2 \log_m \ln n$.
Thus, it remains to prove~\eqref{eq:boundD_t}. We proceed by induction over $t \in \mathbb{N}_0$. Note that $\Delta_{0}=0$, establishing the base case. For the induction step, we assume that the claim holds for some $t\in \mathbb{N}_0$.
Recall that $\mathbb{E}[Y_{t+1}\mid Y_t] = f(Y_t/n)n$. Since $Y_t$ satisfies Lemma~\ref{lem: aux lem 2.5} with $\delta = 1/14$, applying the triangle inequality implies
\begin{equation}\label{inequ: first bound delta}
\begin{aligned}
    \Delta_{t+1} &\leq \left|Y_{t+1}-f(Y_t/n)n \right| +\left|f(Y_t/n)n -  f^{(t+1)}(Y_0/n)n \right| \\
    &\leq (f(Y_t/n)n)^{4/7} + n^{1/14} + n\left|f(Y_t/n) -  f^{(t+1)}(Y_0/n) \right|.
\end{aligned}
\end{equation}
By definition $Y_t/n = f^{(t)}(Y_0/n) \pm \Delta_{t}/n$. Hence, using the mean value theorem, there exists $\xi$ between $Y_t/n$ and $f^{(t)}(Y_0/n)$, so that $0\leq \xi \leq Y_t/n + \Delta_t/n$, and such that
\begin{align*}
\label{inequ: second summand1}
    n \left|f(Y_t/n) -  f^{(t+1)}(Y_0/n) \right|
    = f'(\xi)\Delta_t
    \stackrel{\eqref{inequ: f upper lower x^m}}{\leq}c_u\, (Y_t/n +\Delta_t/n)^{m-1}  \Delta_t.
\end{align*}
This implies that 
\begin{equation}
\label{inequ:2ndboundD_t}
    \Delta_{t+1}
    \le
    (f(Y_t/n)n)^{4/7} + n^{1/14} + c_u\, (Y_t/n +\Delta_t/n)^{m-1}  \Delta_t.
\end{equation}
In what follows we distinguish whether $t > \overline{T}$ or $t \le \overline{T}$.
Suppose $t > \overline{T}$. 
By~\eqref{consequ of Lem 2.8} we have $Y_t\leq n^{1/4}$ and the induction hypothesis yields $\Delta_t\leq n^{1/4}$, too. 
From~\eqref{inequ:2ndboundD_t} we obtain that
    \begin{align*}
    \Delta_{t+1}
    &\leq c_u^{4/7}n^{(4-3m)/7}+n^{1/14}+c_u\,2^{m-1}n^{(4-3m)/4},
    \end{align*}
    which is for sufficiently large $n$ and independently of $t$ at most $n^{1/4}$, as required.
    It remains to consider the case $t\leq \overline{T}$.
Recall that $\lambda=2^{m+1}c_u/c_\ell$. We proceed from~\eqref{inequ:2ndboundD_t}, where we first note that since $t\leq2\log_m\ln n$, the induction hypothesis implies $\Delta_t\leq Y_t$.
Applying $x^m\leq f(x)/c_\ell$ from~\eqref{inequ: f upper lower x^m} and using again the induction hypothesis, this time for the second occurrence of $\Delta_t$, we obtain
    \begin{equation}\label{inequ: second bound delta}
        \Delta_{t+1} \leq (f(Y_t/n)n)^{4/7} + n^{1/14} + \frac{\lambda^{t+1}}{4}  n^{13/14}f(Y_t/n).
    \end{equation}
Since $Y_t$ satisfies Lemma \ref{lem: aux lem 2.5} with $\delta=1/14$. we obtain
$f(Y_t/n)n \leq Y_{t+1}+\left(f(Y_t/n)n\right)^{4/7}+n^{1/14}$.
Note that this guarantees that $f(Y_t/n)n \le 2(Y_{t+1} + n^{1/14})$ whenever $n$ is sufficiently large; to see this, note that if $f(Y_t/n)n$ is larger than some specific constant, then $(f(Y_t/n)n)^{4/7} \le f(Y_t/n)n/2$. Thus,~\eqref{inequ: second bound delta} asserts that
\begin{equation}
\label{eq:Dt+1aux}
    \Delta_{t+1} \leq \left(2Y_{t+1} + n^{1/14}\right)^{4/7} + n^{1/14} + \frac{\lambda^{t+1}}{2} (Y_{t+1} + n^{1/14})\, n^{-1/14}.
\end{equation}
If $Y_{t+1} \ge n^{1/4}$, then this readily implies 
\[
    \Delta_{t+1}
    \le {\lambda^{t+1}}Y_{t+1}n^{-1/14} \left(\frac12 + o(1)\right)
\]
and~\eqref{eq:boundD_t} is established for large $n$. On the other hand, if $Y_{t+1} < n^{1/4}$, then the right hand side in~\eqref{eq:Dt+1aux} is already $o(n^{1/4})$, and~\eqref{eq:boundD_t} is established for large $n$ as well.
\end{proof}

\begin{proof}[Proof of Lemma \ref{lem: threshold sqrt(n)}]
    By~\eqref{inequ: f upper lower x^m}, there exist two constants $c_\ell, c_u > 0$ such that $c_\ell x^m \leq f(x) \leq c_ux^m$. 
    Let $Y_t \geq n^{1-1/m}\ln n$ and recall $Y_{t+1} \stackrel{d}{=} \Bin(n,f(Y_{t}/n))$. Using that $1-x\leq e^{-x}$, we obtain
    \begin{align*}
        \Prob(Y_{t+1}=0\mid Y_t) = \left(1-f(Y_{t}/n)\right)^n \leq \exp(-f(Y_{t}/n)n) \leq \exp(-c_\ell \ln^m n).
    \end{align*}
    Since $m\geq 2$ this proves the first claim.
    To complete the proof, let $Y_{t'} \leq n^{1-1/m}/ \ln n$, then
    \[
    \mathbb{E}[Y_{t'+1} \mid Y_{t'}] =  f(Y_{t'}/n)n \leq c_u Y_{t'}^m n^{1-m} \leq c_u \ln^{-m}n = o(1).
    \]
    This proves the claim by applying Markov's inequality.
\end{proof}

\subsection{Proof of Lemma \ref{lem: prop of g}}\label{sec: proof of prop of g} 
The proof is inspired by the strategy in \cite{debruijn} for analyzing asymptotic problems on iterated functions.
By the properties of $f$ there exists a function $p(x) = O(x)$ as $x\to 0$ such that
\begin{align}\label{equ: f in proof Lem2.2}
f(x) = x^m (\beta +p(x)) \quad \text{and}\quad f^{(t)}(x)=x^{m^t} \prod_{0\leq s \leq t-1} \left(\beta+ p(f^{(s)}(x)) \right)^{m^{t-1-s}}
\quad\text{for } t\in \mathbb{N}.
\end{align}
To shorten the notation, set
\[
z:= 1/2 -\gamma^{-a-x} \quad \text{and}\quad q(x) := \ln(\beta+p(x)) = \ln f(x) - m\ln x.
\]
In the following, we assume that $a,b\in \mathbb{N}$. Applying \eqref{equ: f in proof Lem2.2} with $t=b$ shows
\begin{align*}\label{equ: fct g in proof Lem2.2}
g(x) 
&= 1-x-\Big(\lim_{a\to \infty} a +\log_m\Big| \ln z + \sum_{s\geq 0}m^{-s-1}q(f^{(s)}(z))\Big|\Big).
\end{align*} 
Define the following two functions on $(0,1/2)$
\[
Z(x):= -m \ln x  \quad 
\text{and} \quad
W(x) := -Z(x)+\sum_{s\geq 0} m^{-s}q(f^{(s)}(x))
\]
such that $g(x) = 2-x-(\lim_{a\to\infty}a+\log_m\left|W(z) \right|)$.
Moreover,  $Z(x) = q(x)+Z(f(x))/m$
and consequently, $W$ satisfies the functional equation
\begin{align}\label{equ: Func equation L}
    W(x)=W(f(x))/m,
    \quad\text{for}\quad 0<x<1/2.
\end{align}
In the following, we will describe \emph{all} solutions of~\eqref{equ: Func equation L}, which will eventually enable us to establish every property of $W$ that we need.
We proceed in four steps:
\begin{enumerate}
\itemsep0em 
    \item\label{step1} Describe one solution $W_0$ of~\eqref{equ: Func equation L}.
    \item\label{step2} Show that any solution of~\eqref{equ: Func equation L} has the form $W_0(x)v(x)$, where $v$ is such that $v(f(x))=v(x)$.
    \item\label{step3} Establish continuity of $W$.
    \item\label{step4} Establish that $g$ is $1$-periodic.
\end{enumerate}

\noindent
\underline{1. Base solution $W_0$:} Recall that $f'(1/2)=\gamma > 1$, since $f$ is of majority-type. We first construct a continuous and positive solution $M_0$ of the functional equation 
\begin{align}\label{equ: Schroeder equ}
M_0(f(x)) = \gamma M_0(x),
\quad\text{for}\quad 
0<x<1/2,
\end{align}
so that a solution of \eqref{equ: Func equation L} is given by
$
W_0(x):=m^{\log_{\gamma}(M_0(x))}.
$
Since $f$ is bijective, its inverse $f^{(-1)}$ and also iterates of it are well-defined.
Moreover, from $f(1/2-x) = 1/2 - \gamma x + O(x^2)$, it follows that $f^{(-1)}(1/2-x)=1/2-\gamma^{-1}x+O(x^2)$ as $x\to0$. In particular,
\begin{align}\label{equ: eta}
\eta(x):=\gamma\frac{1/2-f^{(-1)}(1/2-x)}{x} = 1+ O(x) 
\quad\text{as}\quad
x\to 0.
\end{align}
Note that $f^{(-1)}(x)>x$ for $x\in (0,1/2)$ and $f^{(-1)}$ is concave on $[0,1/2]$, since $f(x)<x$ for $x\in (0,1/2)$ and $f$ is convex on $[0,1/2]$. Thus, for $x\in (0,1/2)$, 
we obtain that $\delta_j(x):=1/2-f^{(-j)}(x)$ tends to zero as $j\to \infty$. Using that $\eta(x) = 1 + O(x)$ this directly implies that
\begin{equation}
\label{eq:deltaj+1deltaj}
    \delta_{j+1}(x)
    = \delta_j(x) \gamma^{-1} \eta\big(1/2 - f^{(-j)}(x)\big)
    = \delta_j(x)\gamma^{-1}\big(1+O(\delta_j(x))\big)
\end{equation}
and consequently $\delta_{j+1}(x)\leq \delta_j(x)\gamma^{-1}(1+\varepsilon)$ for any fixed $\varepsilon>0$ and sufficiently large $j$. Hence
\[
    \prod_{j\geq 0} \eta(\delta_j(x)) = \prod_{j\geq 0} (1+O(\gamma^{-j})) <\infty,
    \quad x \in (0,1/2).
\]
Using this, we can define a function on $(0,1/2)$ by
\begin{align*}
    x \mapsto (1/2-x)\prod_{j\geq 0} \eta(\delta_j(x))
    = (1/2-x)\prod_{j\geq 0} \gamma \frac{1/2 - f^{(-j-1)}(x)}{1/2 - f^{(-j)}(x)}
    , 
\end{align*}
which is well-defined, positive, continuous and obviously satisfies~\eqref{equ: Schroeder equ}.

\medskip
\noindent
\underline{2. All solutions:}
We show that any solution $T$ of \eqref{equ: Func equation L} is obtained from $W_0$ by multiplying with a function~$v$ satisfying $v(f(x))=v(x)$. Fix $x_0 \in (0,1/2)$. For any $x \in (0,1/2)$, there is a unique $u \in \mathbb{Z}$ such that $f^{(u)}(x) \in (f(x_0),x_0]$, since $f$ is increasing and $f^{(t)}(x_0) \to 0$ and $f^{(-t)}(x_0) \to 1/2$ as $t \to \infty$. 
Consequently, $T$ is completely determined by its restriction to $(f(x_0),x_0]$, since 
\begin{align}\label{equ: Iteration of T}
T(x) = m^{-u}T(f^{(u)}(x)),
\quad\text{where}\quad
f^{(u)}(x) \in (f(x_0),x_0].
\end{align}
The crucial step is to express $T$ on $(f(x_0),x_0]$ via a function $w$ such that
\[
    T(x) = W_0(x)w(\log_\gamma(M_0(x))), \quad \text{for}\; x \in(f(x_0),x_0].
\]
Note that $w$ is defined on an interval of length one, since $\log_\gamma(M_0(f(x_0)))-\log_\gamma(M_0(x_0))=1$ by~\eqref{equ: Schroeder equ}.
By combining~\eqref{equ: Iteration of T} with~\eqref{equ: Func equation L}, we obtain for $x \in (0,1/2)$ that
\begin{align*}
T(x) &= m^{-u}T(f^{(u)}(x)) = m^{-u}W_0(f^{(u)}(x))w(\log_\gamma(M_0(f^{(u)}(x)))) = W_0(x) w(u+\log_\gamma(M_0(x))).
\end{align*}
Thus, we can choose $w$ to be a $1$-periodic function on $\mathbb{R}$ via the relation $w(x)=w(1+x)$, such that
\[
    T(x) = W_0(x) w(\log_\gamma(M_0(x))).
\]
Finally, setting $v(x) := w(\log_\gamma(M_0(x)))$, we obtain $v(f(x)) = v(x)$, which proves the claim. 

\medskip
\noindent
\underline{3. Continuity:} 
In Step~\ref{step2} we showed that $W$ is determined by its values on $(f(x_0),x_0]$ for some $x_0 \in (0,1/2)$, via the relation $W(x) = m^{-u}W(f^{(u)}(x))$. Since $f^{(u)}(x)$ is continuous for $u\in \mathbb{Z}$, it suffices to show continuity on $[f(x_0),x_0]$. Set
\[
    W^{\leq t}(x)
    = \sum_{0\leq s\leq t} m^{-s}\ln(\beta +p(f^{(s)}(x)))-Z(x),
    \quad t\in\mathbb{N}.
\]
The function $p$ is continuous, as $p(x)=f(x)/x^m-\beta$. 
Thus, $W^{\leq t}(x)$ is continuous for every $t\in\mathbb{N}$ and by definition, $W^{\leq t}(x) \to W(x)$ as $t\to\infty$.
Since $p(x) \to 0$ as $x \to 0$ and if $x < 1/2$ also $f^{(s)}(x) \to 0$ as $s \to \infty$, we get for large $t\in\mathbb{N}$,
\[
\left|W(x)- W^{\leq t}(x)\right| =\left|\sum_{s> t}m^{-s} \ln(\beta +p(f^{(s)}(x)))\right| \leq |\ln(2\beta)|\sum_{s\geq t}m^{-s} \xrightarrow[t \to \infty]{} 0\quad \text{for}\; x \in [f(x_0),x_0].
\]
This proves that $W(x)$ is the uniform limit of continuous functions on $[f(x_0),x_0]$ and therefore continuous. If we write $W(x) = W_0(x)w(\log_\gamma(M_0(x)))$, we can also deduce that $w$ is continuous, since $W(x)$, $W_0(x)$ and $\log_\gamma(M_0(x))$ are continuous.

\medskip
\noindent
\underline{4. $g$ is $1$-periodic:} 
Recall that $g(x) = 2-x-\lim_{a\to\infty}a+\log_m\left|W(z) \right|$ with $z=1/2 -\gamma^{-a-x}$. By Steps~\ref{step2} and~\ref{step3} there is a continuous $1$-periodic function $w$ such that  
\[
    W(x)=m^{\log_{\gamma}(M_0(x))} w(\log_\gamma(M_0(x)))
\quad\text{for } 0<x<1/2,
\]
where, by Step~\ref{step1},
\[
    M_0(x)=(1/2-x)\prod_{j\geq 0} \eta(\delta_j(x)) 
    \quad\text{and}\quad 
    \eta(x)=\gamma\frac{1/2-f^{(-1)}(1/2-x)}{x},
    \quad
    \delta_j(x)=1/2-f^{(-j)}(x).
\]
Next, we study the function $\log_\gamma(M_0(1/2-x))$ (since we are interested in $W(z)$). 
As we showed after~\eqref{eq:deltaj+1deltaj}, for $x\in(0,1/2)$, fixed $\varepsilon>0$ and sufficiently large~$j$, 
\[
    \delta_{j}(1/2-x)\leq \delta_{j-1}(1/2-x)\gamma^{-1}(1+\varepsilon)\leq \delta_0(1/2-x)\gamma^{-j}(1+\varepsilon)^j = x\gamma^{-j}(1+\varepsilon)^j.
\]
Since $\eta(x)=1+O(x)$ by~\eqref{equ: eta}, and $\log_\gamma(1+x)=O(x)$ as $x\to 0$, for sufficiently small $\varepsilon>0$,
\[
\log_\gamma \prod_{j\geq 0} \eta(\delta_j(1/2-x)) = \sum_{j\geq0}\log_\gamma \eta(O(x\gamma^{-j}(1+\varepsilon)^j)) 
= O(x)
\quad \text{as }x\to 0.
\]
Since $M_0(1/2-x)=x\prod_{j\geq 0} \eta(\delta_j(1/2-x))$, we obtain 
$\log_\gamma(M_0(1/2-x)) = \log_\gamma x + O(x)$ as $x\to 0$.
Therefore, 
\begin{align*}
    g(x) &= 2-x-\Big(\lim_{a\to\infty}a+\log_m\left|m^{-a-x+O(\gamma^{-a-x})}w(-a-x+O(\gamma^{-a-x})) \right|\Big)\\
    &=2-\lim_{a\to\infty}\log_m\left|w(-x+O(\gamma^{-a-x})) \right|.
\end{align*}
Thus, the continuity of $w$ shows $g(x) = 2-\log_m|w(-x)|$, so $g$ is a continuous $1$-periodic function.

\subsection{Proof of Lemma~\ref{lem: prop of h}}\label{sec: proof of prop of h}
    By Lemma \ref{lem: prop of g}, $h$ is continuous and $h(x+1)=h(x)+1$. Therefore, it suffices to prove injectivity on an interval of the form $[y,y+1)$, where $y\in\mathbb{R}$ will be chosen later. 
    Define the functions
    \[
    w_t(x):=m^{-t}\ln (f^{(t)}(x)) \quad \text{and} \quad u_t(x):=f^{(t)}(1/2-\gamma^{-t-x}),
    \quad t\in\mathbb{N}.
    \]
    In the following, we assume that $a,b\in\mathbb{N}$. With this notation, note that
    \[
        h(x)= 1+\lim_{a\to\infty}\lim_{b\to \infty}-\log_{m}|w_{b-a}(u_a(x))|.
    \]
    Note that $w_{b-a}(u_a(x))$ is negative for every $x$. In particular, the absolute value in the definition of $h$ does not affect injectivity considerations. Thus, it suffices to show that $\lim_{a\to\infty}\lim_{b\to \infty}w_{b-a}(u_a(x))$ is injective.
    To this end, we prove that there exists $\ell_y$ such that
    \[
    \liminf_{a\to\infty}\liminf_{b\to \infty}w_{b-a}'(u_a(x)) u_a'(x)\geq \ell_y >0
    \quad \text{for } x\in [y,y+1).
    \]
 We first establish that $\liminf_{a\to \infty} u'_a(x)$ is bounded away from zero on $[y,y+1)$. By the chain rule
    \[
    u_a'(x)= \frac{\ln\gamma}{\gamma^{a+x}} \prod_{0\leq j\leq a-1}f'(f^{(j)}(1/2-\gamma^{-a-x})).
    \]
    Note that $f'$ is increasing on $[0,1/2]$, since $f$ is convex there. Thus, choosing $y$ such that $1/2-\gamma^{-y}>0$ and using $1/2-\gamma^{-a-x}\geq 1/2-\gamma^{-a-y}$, since $x\in[y,y+1)$, we obtain
    \[
    u_a'(x)\geq \frac{\ln\gamma}{\gamma^{a+y+1}}\prod_{0\leq j\leq a-1}f'(f^{(j)}(1/2-\gamma^{-a-y})).
    \]
    Moreover, $f'(1/2)=\gamma$ and $f''(1/2)=0$, since $f''$ is continuous and $f$ is convex on $[0,1/2]$ and concave on $[1/2,1]$. 
    By Taylor's theorem $f'(1/2-x)=\gamma-O(x^2)>0$ for $x< 1/2$.
    Since $f$ is convex and convex functions stay over their tangents, we further obtain $f^{(j)}(1/2-\gamma^{-a-y})\geq 1/2-\gamma^{-a-y+j}>0$ for $0\leq j \leq a-1$. Hence,
    \[
        u_a'(x)
        \ge
        \frac{\ln\gamma}{\gamma^{y+1}}\prod_{1\leq j \leq a}\left(1-O(\gamma^{-2y-2j})\right).
    \]
    By the Weierstrass product inequality this is $\ge 1-\sum_{1\leq j\leq a}O(\gamma^{-2y-2j})$. Since every factor in the product is positive, by summing a geometric series, we obtain that there is a constant $c>0$ such that $\prod_{1\leq j \leq a}\left(1-O(\gamma^{-2j})\right)\geq c$, so that $u_a'(x)$ is bounded away from zero, as required.

We conclude by proving that $\liminf_{a\to\infty}\liminf_{b\to \infty}w_{b-a}'(u_a(x))$ is bounded away from zero on $[y,y+1)$. As preparation, we first establish an upper bound on $u_a(x)$ that is uniform in $x$ and $a$.
Applying~\eqref{ineq: proof 1.4 (3)} with $t=a$ and $x=\gamma^{-a-x}$, and using that $f^{(a)}(1-x) = 1-f^{(a)}(x)$ we obtain
\[
u_a(x) = f^{(a)}(1/2-\gamma^{-a-x}) \leq 
\frac{1}{2}- \gamma^{-x}   \left(1- \gamma^{-x}w_2  (\gamma-1)^{-1}\right),
\quad \text{for } 0\leq \gamma^{-x-1} \leq 1/w_2,
\]
where $w_2>0$ is the constant from~\eqref{ineq: proof 1.4 (3)}. 
We choose $y$ sufficiently large such that every $x\in[y,y+1]$ satisfies $\gamma^{-x}\leq (\gamma-1)/(2w_2)$. Hence,
\[
    u_a(x) \leq \frac{1}{2}- \frac{1}{2\gamma^{y+1}} =:u_y.
\]
With this at hand, let $b':=b-a$ and $z:=u_a(x)\leq u_y$.
The derivative  
\begin{align}\label{equ: deriv of w_b'}
    w'_{b'}(z) = \frac{1}{m^{b'} f^{(b')}(z)}\prod_{0\leq j \leq b'-1}f'(f^{(j)}(z)).
\end{align}
Recall that $f(x)=\beta x^m+O(x^{m+1})$ and $f'(x)=\beta m x^{m-1}+O(x^m)$ as in~\eqref{equ: formula derivative}. Since $f'(x)>0$ for $0<x<1/2$,
\begin{align}\label{equ: quotient}
    \frac{f'(x)^{m/(m-1)}}{m f'(f(x))^{1/(m-1)}} = 1+O(x)>0
    \quad\text{for}\quad 0<x<1/2.
\end{align}
Consequently, for $0<x<1/2$,
\begin{align*}
    f'(x)^{1/(m-1)}\prod_{0\leq j \leq b'-1}f'(f^{(j)}(x))
    =
    (1+O(x))m f'(f(x))^{1/(m-1)}  \prod_{1\leq j \leq b'-1}f'(f^{(j)}(x)).
\end{align*}
By iterating this, we obtain for $0<x<1/2$,
\[
    f'(x)^{1/(m-1)}\prod_{0\leq j \leq b'-1}f'(f^{(j)}(x))
    = m^{b'}f'(f^{(b')}(x))^{1/(m-1)} \prod_{0\leq j\leq b'-1}(1+O(f^{(j)}(x))).
\]
Recall $z=u_a(x) \in (0,1/2)$ so that from \eqref{equ: deriv of w_b'} it follows that
\begin{equation}
\label{eq:w'b'2}
    w'_{b'}(z)
    \geq
    f'(z)^{-1/(m-1)}\frac{f'(f^{(b')}(z))^{1/(m-1)}}{f^{(b')}(z)} \cdot  \prod_{0\leq j\leq b'-1}(1+O(f^{(j)}(z))).
\end{equation}
We conclude by proving that the three terms on the right-hand side are bounded away from zero as $b'\to\infty$ and $a\to\infty$.
Since $f$ is convex on $[0,1/2]$, the derivative $f'$ is increasing, so that $f'(z)^{-1/(m-1)} \geq f'(1/2)^{-1/(m-1)} = \gamma^{-1/(m-1)}$. Thus, $f'(z)^{-1/(m-1)}$ is bounded away from zero. 
Regarding the second term, note that $f'(x)=\beta m x^{m-1}+O(x^m)$ implies 
\[
\frac{f'(x)^{1/(m-1)}}{x}=(\beta m)^{1/(m-1)}+O(x).
\]
Moreover, $f^{(b')}(z)$ converges to $0$ as $b'\to\infty$, since $f(x)<x$ for $x\in(0,1/2)$. Thus,
\[
    \frac{f'(f^{(b')}(z))^{1/(m-1)}}{f^{(b')}(z)}
    = (\beta m)^{1/(m-1)} +O(f^{(b')}(z)) \overset{b'\to\infty}{\longrightarrow}(\beta m)^{1/(m-1)}
\]
that is bounded away from zero and independent of $a$.
We finally consider $\prod_{0\leq j< b'}(1+O(f^{(j)}(x)))$, where every factor is positive by~\eqref{equ: quotient}.
Note that
$f^{(j)}(z)\leq f^{(j)}(u_y)$, since $z\leq u_y$ and $f$ is increasing. Using that $f(x) \le c_u x^m$ from~\eqref{inequ: f upper lower x^m} we infer that $f^{(j)}(u_y)$ converges superexponentially fast to zero as $j\to\infty$. So, the product is bounded away from zero, even as $b'\to\infty$ uniformly in $a$.

\subsection{Proof of Lemma~\ref{lem: l and u close}}\label{subsec: Other proofs}

Recall from Lemma~\ref{lem: T_2 consequence} that on an event with probability $1-o^\star(1)$ we have that $\Phi =  \{\log_\gamma(n/\tilde X_{t_1})\}$,
\begin{align*}
    L = f^{(b)}\big(1/2+\gamma^{-a-\Phi}(1-c\gamma^{-a})\big) \cdot (1-C^{-1/4})
    \quad
    \text{and}    
    \quad
    U= f^{(b)}\big(1/2+\gamma^{-a-\Phi}\big)\cdot(1+C^{-1/4}).
\end{align*}
We first show that $U$ and $L$ converge to one as $C\to \infty$ and $b\to \infty$ whenever $a$ is sufficiently large. 
Since $\Phi\in[0,1)$, for sufficiently large $a$,
\[
\lim_{C\to\infty} L \geq  f^{(b)}\big(1/2+\gamma^{-a-1}(1-c\gamma^{-a})\big).
\]
Moreover, since $f$ is of majority-type, $f^{(b)}(1/2+\delta)\to1$ as $b\to \infty$ and for any $\delta >0$. Therefore, using $L \leq U \leq 1$, we conclude that
$
\lim_{b\to \infty}\lim_{C\to \infty} L = \lim_{b\to \infty}\lim_{C\to \infty} U =1.
$
Thus, we have just established that $U=1-o^\star(1)$ and $L=1-o^\star(1)$ with probability $1-o^\star(1)$.

It remains to show that the logarithmic terms containing $U,L$ are very close. 
Since $f(x)>x$ for $x\in (1/2,1)$ and $f$ is monotonically increasing,
for any $\delta>0$ and sufficiently large $a,b\in\mathbb{N}$, there exists $C_0>0$, such that for any $C\geq C_0$,
\begin{align*}
    U\leq f^{(b)}\big(1/2+\gamma^{-a-\Phi+\delta}\big)
    \quad\text{and}\quad
    L \geq f^{(b)}\big(1/2+\gamma^{-a-\Phi-\delta}\big).
\end{align*}
Hence, for such combinations of $a,b,C$
\begin{align*}
&\Big|\log_m|\ln(1-U)|-\log_m|\ln(1-L)|\Bigr| \\
&\leq \Big|\log_m|\ln (1-f^{(b)}(1/2+\gamma^{-a-\Phi+\delta})|-\log_m|\ln (1-f^{(b)}\big(1/2+\gamma^{-a-\Phi-\delta}\big)| \Big|.
\end{align*}
Lemma~\ref{lem: prop of g} asserts that the function
\[
    g(x)=1-x+\lim_{a\to \infty, a\in\mathbb{N}} \lim_{b \to \infty, b\in \mathbb{N}} b-a- \log_m|\ln( f^{(b)}(1/2-\gamma^{-a-x}))|,
\]
is well-defined and continuous. Let $\varepsilon>0$.  Then there is $a_0\in\mathbb{N}$ such that for any $a\geq a_0$ there is $b_0(a)\in\mathbb{N}$ such that for any $b\geq b_0$ and all sufficiently large $C$
\begin{align}\label{inequ: eps/2}
    &\Big|\log_m|\ln (1-U)|-\log_m|\ln (1-L)| \Big|\leq|g(\Phi+\delta)-g(\Phi-\delta)|+\frac\varepsilon2.
\end{align}
Moreover, by continuity, choosing $\delta$ sufficiently small guarantees that $|g(\Phi+\delta)-g(\Phi-\delta)|\leq \varepsilon/2$. This proves the claim.

\section{Majority-type Protocols}
\label{sec:examples}

Apart from \texttt{$k$-maj} we consider two additional classes of protocols, as mentioned in the introduction:
\begin{itemize}
\itemsep0pt
    \item \texttt{rand-$k$-maj.} Let $K$ be a bounded random variable taking values in $\mathbb{N}_{\geq 3}$. The update rule for \texttt{rand-\allowbreak $k$-\allowbreak maj} with respect to $K$ is as follows. Each vertex samples the number of considered neighbors $k$ according to the distribution $K$ and then applies the \texttt{$k$-maj} update rule.
    \item \texttt{$k$-neighb-rand.} Let $Q$ be a random variable taking values in $\{2,\dots,k-1\}$, symmetric around $k/2$,~i.e., $\Prob(Q=q)=\Prob(Q=k+1-q)$ for all $q\in\{2,\dots,k-1\}$. Let $X$ and $Y$ be the two possible opinions.
    As in $k$-majority, every vertex samples $k$ neighbors uniformly at random, with replacement. Next, every vertex chooses a threshold $q$ according to $Q$ and adopts opinion $X$ if there are at least $q$ sampled neighbors with opinion $X$.
\end{itemize}
In this section we show that our main results apply to all aforementioned protocols by establishing that in all cases the associated update function is of majority type as required in Definition~\ref{def: majority-type}.

\medskip
\noindent
\texttt{$k$-maj:} The probability that a vertex has opinion $X$ at time $t+1$ is given by
\begin{alignat*}{2}
    f_k(X_t/n)&= \sum_{0\leq i <  k/2 }\binom{k}{i} (X_t/n)^{k-i} (1-X_t/n)^{i}, \quad 
    &&\text{if }k \text{ is odd}, \\
    f_k(X_t/n)&=\sum_{0\leq i <  k/2 }\binom{k}{i} (X_t/n)^{k-i} (1-X_t/n)^{i} +\binom{k}{k/2}(X_t/n)^{k/2} &&(1-X_t/n)^{k/2}/2 \\
    &=\sum_{0\leq i <  (k-1)/2 }\binom{k-1}{i} (X_t/n)^{k-1-i} (1-X_t/n)^{i}, \quad 
    &&\text{if }k \text{ is even}.
\end{alignat*}
Consequently, if $k$ is odd, then $f_k$ and $f_{k+1}$ are the same and 
\[
f_k(x) = \binom{k}{\lfloor k/2 \rfloor} x^{\lceil k/2 \rceil} +O(x^{\lceil k/2 \rceil+1}) 
\quad
\text{and} 
\quad
f_k(1/2 +x) =1/2 +\binom{k-1}{\lfloor k/2\rfloor} 2^{1-k}k x + O(x^2).
\]
The property $f_k(x)=1-f_k(1-x)$ follows by symmetry of the binomial distribution or direct computation.
Thus,~Theorem~\ref{thm: main result} applies with
\[
m = \lceil k/2\rceil, \quad \beta =\binom{k}{\lfloor k/2 \rfloor} \quad \text{and} \quad \gamma = \binom{k-1}{\lfloor k/2\rfloor} 2^{1-k}k.
\]
Figure \ref{fig:f_k} illustrates $f_k$ for several $k$ and a numerical approximation of $g$ is given in Figure \ref{fig:g3-maj}. 

\begin{figure}
    \centering
\begin{tikzpicture}

\begin{axis}[
    ymode=log,
    width=10cm,
    height=5.1cm,
    xmin=-0.2, xmax=3.2,
    xtick distance=0.5,
    ytick={1e-8,1e-7,1e-6,1e-5,1e-4},
    minor tick num=0,
    grid=major,
    legend style={
    at={(0.97,0.5)},
    anchor=east
    }
]

\addplot [color=blue, very thick] table [x=x, y=g3] {multi-c.dat};
\addlegendentry{$k=3$}

\addplot [color=red, very thick] table [x=x, y=g5] {multi-c.dat};
\addlegendentry{$k=5$}

\addplot [color=green, very thick] table [x=x, y=g7] {multi-c.dat};
\addlegendentry{$k=7$}

\end{axis}
\end{tikzpicture}
\caption{The plot depicts a numerical approximation of $g$, appearing in the limiting distribution of \texttt{$k$-maj}.
We display the vertically shifted function $\overline{g}(x)=g(x)-2\inf g +\sup g$ in order to make its continuity and the $1-$periodicity more apparent. The amplitude $\sup g-\inf g$ is approximately $2.6\cdot 10^{-8}$ for $k=3$, $1.1\cdot 10^{-6}$ for $k=5$ and $5.5\cdot 10^{-6}$ for $k=7$.}
\label{fig:g3-maj}
\end{figure}

\medskip
\noindent
\texttt{rand-$k$-maj:} Let $K$ be a fixed bounded random variable taking values in $\mathbb{N}_{\geq 3}$. Each vertex samples its number of considered neighbors according to $K$. Thus, for $k\in \mathbb{N}_{\geq 3}$, the probability that a vertex applies the $k$-majority update rule is given by $\Prob(K=k)$. Accordingly, the function $f$ takes the form
\begin{align}\label{ex: f for random-k-maj}
    f(x) = \sum_{k\geq 3} \Prob(K=k)f_k(x).
\end{align}
Since $K$ is bounded, the function $f$ is well-defined and satisfies $f(x)=1-f(1-x)$, as it holds for every $f_k$.
Let $k_1$ be the smallest integer such that $\Prob(K=k_1)>0$. We assume that $k_1$ is odd, otherwise, the result follows by $f_{k_1}(x) = f_{k_1-1}(x)$. The crucial values $m$, $\beta$ and $\gamma$ are  
\[
m = \lceil k_1/2\rceil, \quad \beta = \Prob(K=k_1) \binom{k_1}{\lfloor k_1/2 \rfloor} \quad \text{and} \quad
\gamma = \mathbb{E}\left[\binom{K-1}{\lfloor K/2\rfloor} 2^{1-K}K\right].
\]

\medskip
\noindent
\texttt{$k$-neighb-rand:} Let $Q$ be a fixed symmetric random variable taking values in $\{2,\dots,k-1\}$. For given $q\in\{2,\dots,k-1\}$, the probability that a vertex adopts opinion $X$ at time $t+1$ is $\Prob(\Bin(k,X_t/n)\geq q)$. Hence, the function $f$ is given by
\[
f(x)= \sum_{2\leq q\leq k-1} \Prob(Q=q) \sum_{q\leq i\leq k} \binom{k}{i}x^i (1-x)^{k-i}.
\]
Since $Q$ is symmetric, it follows that $f(x)=1-f(1-x)$. Moreover, let $q_1$ be the smallest integer such that $\Prob(Q=q_1)>0$, we obtain
\[
m = q_1, \quad \beta = \Prob(Q=q_1) \binom{k}{q_1} \quad \text{and} \quad \gamma = \mathbb{E}\left[\binom{k-1}{Q-1}\right]k 2^{1-k}.
\]
The assumption $Q \ge 2$ then implies that $m \ge 2$, as required.

\bibliographystyle{abbrv}
\bibliography{references}

\end{document}